\newtheorem{theorem}{Theorem}[section]
\newtheorem{lemma}[theorem]{Lemma}
\newtheorem{proposition}[theorem]{Proposition}
\newtheorem{corollary}[theorem]{Corollary}
\newtheorem{conjecture}[theorem]{Conjecture}
\theoremstyle{definition}
\newtheorem{definition}[theorem]{Definition}
\newtheorem{example}[theorem]{Example}
\newtheorem{rem}[theorem]{Remark}
\newcommand{\p}{^}
\newcommand{\pb}[2]{
	\ensuremath{\langle #1,#2 \rangle}}
\newcommand{\m}{{\bf m}}
\newcommand{\rmd}{d}
\newcommand{\rme}{\mathrm{e}}
\newcommand{\Xscr}{\mathscr{X}}
\newcommand{\Nscr}{\mathscr{N}}
 \newcommand{\al}{\alpha}
\newcommand{\gm}{\gamma}
\newcommand{\ep}{\varepsilon}
\newcommand{\lm}{\lambda}
\newcommand{\Lm}{\Lambda}
\newcommand{\sig}{\sigma}
\newcommand{\Om}{\Omega}
\newcommand{\cB}{\mathcal{B}}
\newcommand{\cS}{\mathcal{S}}
\newcommand{\cM}{\mathcal{M}}
\newcommand{\cN}{\mathcal{N}}
\newcommand{\Ebb}{\mathbb{E}}
\newcommand{\EE}{\mathbb{E}}
\newcommand{\PP}{\mathbb{P}}
\newcommand{\Pbb}{\mathbb{P}}
\newcommand{\Rbb}{\mathbb{R}}
\newcommand{\R}{\mathbb{R}}
\newcommand{\bi}{\begin{itemize}}
\newcommand{\ei}{\end{itemize}}
\newcommand{\be}{\begin{enumerate}}
\newcommand{\ee}{\end{enumerate}}
\newcommand{\beq}{\begin{equation}}
\newcommand{\eeq}{\end{equation}}
\newcommand{\beqs}{\begin{equation*}}
\newcommand{\eeqs}{\end{equation*}}
\newcommand{\beqa}{\begin{eqnarray}}
\newcommand{\eeqa}{\end{eqnarray}}
\newcommand{\beqas}{\begin{eqnarray*}}
\newcommand{\eeqas}{\end{eqnarray*}}
 \newcommand{\aPP}[2]{\ensuremath{\langle #1,#2 \rangle}}
\newcommand{\equa}{\begin{eqnarray*}}
\newcommand{\tion}{\end{eqnarray*}}
\newcommand{\equal}{\begin{eqnarray}}
\newcommand{\tionl}{\end{eqnarray}}
\newcommand{\one}{\mathds{1}}
\newcommand{\N}{\mathbb{N}}
\newcommand{\sn}{\eta_{\scalebox{0.5}{$ N$}}}
\newcommand{\ssn}{\eta_{\scalebox{0.4}{$ N$}}}
\def\timenow{\@tempcnta\time
\@tempcntb\@tempcnta
\divide\@tempcntb60
\ifnum10>\@tempcntb0\fi\number\@tempcntb
:\multiply\@tempcntb60
\advance\@tempcnta-\@tempcntb
\ifnum10>\@tempcnta0\fi\number\@tempcnta}
\newcommand{\new}{\color{blue}}
\title{Product  formulas for multiple stochastic integrals \\ associated with L\'evy processes}
\author{Paolo Di Tella$^1$   \hspace{0.9em} Christel Geiss$^2$   \hspace{0.9em}  Alexander Steinicke$^{3,2}$  \vspace*{-0.3em}\\  
 {  \tiny{ \hspace{-2.8em} \tt paolo.di\_tella{\rm@}tu-dresden.de}   \hspace{2.0em}{\tt christel.geiss{\rm@}jyu.fi}     \hspace{2.em}  {\tt alexander.steinicke{\rm@}unileoben.ac.at}} \\\\
\small  \today}
\date{}
\begin{document}
\parindent 0pt

\maketitle
\begin{abstract}
 In the present paper, we obtain an explicit product formula for products of multiple integrals w.r.t. a random measure associated with a L\'evy process. As a building block, we use  a representation formula for products of martingales from a compensated-covariation stable family. This enables us to consider L\'evy processes with both  jump and Gaussian part. It is well known that for multiple integrals w.r.t.~the Brownian motion such product formulas exist without further integrability conditions on the kernels. However, if a jump part is present, this is, in general, false. Therefore, we provide here sufficient conditions on the kernels which allow us to establish product formulas. As an application, we obtain explicit expressions for the expectation of products of iterated integrals, as well as for the moments and the cumulants for stochastic integrals w.r.t.~the random measure. Based on these expressions, we show a central limit theorem for the long time behaviour of a class of stochastic integrals. Finally, we provide methods to calculate the number of summands in the product formula.  
\end{abstract}

\vspace{1em}
{\noindent \textit{Keywords:} Product  formulas for stochastic integrals, moment formulas, multiple integrals, L\'evy processes, iterated integrals, central limit theorem, compensated-covariation stable families\\
\noindent \textit{Mathematics Subject Classification:} 60H05, 60G51, 60G44, 60F05
}
{
\footnotetext[1]{Institute for Mathematical Stochastics, TU Dresden, Germany.   }
\footnotetext[2]{Department of Mathematics and Statistics, University of Jyvaskyla, Finland. }
\footnotetext[3]{Department of Mathematics and Information Technology, University of Leoben, Austria.   }}

\section{Introduction  }\label{sec:intro}
In the present work we consider multiple Brownian-Poisson stochastic integrals, that is, multiple stochastic integrals generated by  a random measure that  exhibits both a Gaussian and a Poisson part. This kind of random measures are associated  with L\'evy processes. We  show in Theorem \ref{L2condition-thm} a compact and accessible product formula for the product  of $N\geq2$ multiple Brownian-Poisson stochastic integrals,  extending previous results obtained for $N=2$ (see for instance \cite{S84}, \cite{Kab}, \cite{LeeShih04}, \cite{N06}, \cite{PT11}, \cite{Major14}, \cite{DoPecc18}). 
   In such formulas, products of multiple stochastic integrals are expressed as a sum of multiple stochastic integrals. 
 
To state our product formula, a crucial step is the definition of the \emph{star operator} $\bigstar^l_{l^o}$ (see Definition \ref{contractions_and_identifications}). It generalizes the identification and contraction rules introduced e.g.~in \cite{DoPecc18A} for {$N=2$, to an arbitrary number $N\geq2$ of multiple integrals}.   \smallskip

Multiple stochastic integrals play an important role in stochastic analysis {because of their use for the} chaos expansion and for the definition of the  Malliavin derivative. Moreover, {their products} find applications in certain limit theorems and numerical simulations \cite{PT11}. 

The classic example of such a product formula can be found  e.g.~in  Nualart \cite[Proposition 1.1.3]{N06}, where for symmetric functions $f\in L^2([0,T]^n),\, g\in  L^2([0,T]^m)$, the product of two multiple integrals w.r.t.\ the Gaussian measure generated by the  Brownian motion is given by  
\equal \label{classical-prod}
    I_n(f) I_m(g) = \sum_{r=0}^{n\wedge m} r! \binom{n}{r} \binom{m}{r} I_{n+m-2r}(f\otimes_r g).
\tionl
Here, $f\otimes_r g$ denotes an operator which 'contracts' $r$ variables of $f$ and  of $g$ (i.e., they are identified and integrated out).  For the extension of \eqref{classical-prod} to an arbitrary $N\geq2$, we refer to Major \cite[Theorem 10.2]{Major13}. 
It is well known that for multiple Brownian integrals the product formula always holds for any $N\geq2$, with no need of further integrability conditions on the kernels. Contrarily, for multiple Poisson integrals, i.e., for multiple stochastic integrals generated by a Poisson random measure, this is not true in general and, as a minimal requirement for the product formula, one has to ensure that the
product of multiple Poisson stochastic integrals is again square integrable. Even  in the elementary case of a compensated Poisson process, the latter condition may be violated:  Let $\mathcal{N}$ be a standard Poisson process and set $L_t = \mathcal{N}_t - t$. For any $f\in L^2([0,T])$, by It\^o's formula, we have

\equa
     \left (\int_0^T f(t) \rmd  L_t \right)^2 = 2 \int_0^Tf(t)\left(\int_0^{t-} f(s) \rmd L_s\right)  \rmd L_t +  \int_0^T f(t)^2\rmd L_t 
     + \int_0^T f(t)^2 \rmd t,
\tion 
which is square integrable if and only if $f\in L\p4([0,T])$. Interpreting the left-hand side as the product of two  multiple stochastic integrals  of order one, we see that this is not  square integrable, and hence does not have a chaos decomposition, unless $f\in L\p4([0,T])$ .  \smallskip

 The first product formula was established by It\^o \cite{I51} in 1951 and corresponds to \eqref{classical-prod} for an arbitrary $n$ but for $m=1$. Kabanov \cite{Kab} extended this formula to multiple  Poisson stochastic integrals (keeping $m=1$). Surgailis \cite{S84} proved a product formula for $N\geq2$ multiple Poisson
integrals in 1984 and asked whether the square integrability of their product is a sufficient condition for the product formula to hold. Recently, in \cite[Theorem 2.2]{DoPecc18A}, D\"obler and Peccati gave a positive answer for the case of two multiple Poisson integrals. However, the case of an arbitrary number of factors seems to be still open and, in the present paper, we formulated this problem in Conjecture \ref{conjecture}. Lee and Shih were the first authors who in \cite{LeeShih04} unified the Brownian and the Poisson case considering product formulas for Brownian-Poisson multiple integrals.
 \smallskip
 
 Product formulas for multiple integrals have been obtained in the literature by several methods. For example, in \cite{Major14}, \cite{S84}, \cite{PT11} \textit{diagram formulae} are used. In \cite{N06} and \cite{Last16}, the formulas are obtained by induction on the order of the multiple integrals. 
In \cite{Meyer93} and \cite{LeeShih04}, the relation between the boson Fock space and the representation of the stochastic exponential of $I_1(f)$ is exploited. In \cite{Ag20}, the properties of the  stochastic exponential are used to derive a formal expression for the product formula for $N\geq2$ multiple Poisson stochastic integrals. Some papers, as e.g.~\cite{LPST14} or \cite{Bogdan-etal}, obtain explicit moment and cumulant formulas for multiple Poisson integrals relying on Mecke's formula rather than on product formulas. In the present paper, as done in \cite{DTG19} for a different context, we use the properties of compensated-covariation stable families of martingales. This allows us to directly show a product formula for $N\geq2$ multiple Brownian-Poisson stochastic integrals associated with arbitrary L\'evy processes. Obviously, a moment formula can be immediately obtained by taking the expectation in our product formula.\smallskip

 The present paper has the following structure: 
Section \ref{sec:mu.in} is devoted to multiple Brownian-Poisson stochastic integrals. Section \ref{sec:com.cov.st} recalls a representation formula for products of compen\-sated-covariation stable families of martingales that we apply in Section \ref{sec:fi.pro.for} to obtain a first product formula for iterated integrals. This product formula  is then    transformed  into one  for multiple integrals  and extended to the general setting of Theorem \ref{L2condition-thm}. The latter is applied in Section \ref{sec:appl} to obtain  moment and cumulant formulas for stochastic integrals. These formulas are then exploited to show a central limit theorem that describes the long time behaviour of multiple integrals of order one for a class of integrands. 
Finally, in Section \ref{sec:DSSn} we provide several methods of  algorithmic interest to count the number of summands  contained in the product formula from Section \ref{sec:fi.pro.for}.  

\section{Multiple  integrals}\label{sec:mu.in}

Let $L=\left(L_t\right)_{t\geq0}$ be a c\`adl\`ag one-dimensional L\'evy process with   characteristic triplet  $(\gm,\sig\p2,\nu)$   on a complete probability space $(\Omega,\mathcal{F},\mathbb{P})$. 
The augmented natural filtration of $L$  will be denoted by
$\left({\mathcal{F}_t}\right)_{t\geq0}$. We now fix $T>0$, restrict our analysis to $[0,T]$ and assume that  $\mathcal{F}=\mathcal{F}_T$. 

 The L\'evy-It\^o decomposition of {$L$ reads}
\equa
L_t = \gamma t + \sigma W_t   +  \int_{{(0,t]}\times \{ |x|\le1\}} x\tilde{\mathcal{N}}(\rmd s,\rmd x) +  \int_{{(0,t]}\times \{ |x|> 1\}} x  \mathcal{N}(\rmd s,\rmd x),
\tion
where $\gamma\in \R, \sigma\geq 0$, $W$ is a  standard Brownian motion and $\mathcal{N}$ ($\tilde{\mathcal{N}}$) is the (compensated) Poisson random measure corresponding to $L$. 
The random measure $M$ given by
$$   M(\rmd t, \rmd x) = \sigma \rmd W_t \delta_0(\rmd x) + \tilde{\mathcal{N}}(\rmd t,\rmd x), $$
where $\delta_0$ {denotes} the Dirac measure concentrated in zero, is used to define the following {\it iterated integrals}:  We set $\Rbb_0 := \Rbb \setminus \{0\}$ and
       $$\m(\rmd t,\rmd x) := \rmd t (\sigma^2 \delta_0(\rmd x) + \nu(\rmd x)).$$ 
        For { $k \geq1$} we put 
$$f\in L^p_k:=L^p(([0,T]\times\Rbb)^k, \mathcal{B}( ([0,T]\times\Rbb)^k),   \m^{\otimes k}), \quad \text{for } \,\, p \ge 1,$$
and denote {$\|\cdot\|_{ L^p_k}$ the $L^p_k$-norm of $f$}.
We denote by  $\hat f((t_1,x_1),...,(t_k,x_k))$  the symmetrisation of the kernel
$f((t_1,x_1),...,(t_k,x_k))$ w.r.t.the $k$ pairs of variables:

\begin{equation}\label{eq:symm}
\hat f((t_1,x_1),...,(t_k,x_k)) =  \frac{1}{k!} \sum_{\pi \in {\tt S}_k}  f((t_{\pi(1)},x_{\pi(1)}),...,(t_{\pi(k)},x_{\pi(k)})),
\end{equation}

 where ${\tt S}_k$ is the set of all permutations $\pi$ of $\{1,...,k\}.$  {We  introduce the \emph{iterated integrals} as  follows:}
\equa
J_0 &:=& \text{the identity map on} \,\, \R \\
J_1( f)_T &:=&  \int_{(0,T]\times\R}  f(t,x) M(\rmd t,\rmd x),  \quad f\in  L^2_1,  \\
J_2(\hat f)_T &:=&  \int_{(0,T]\times\R} \int_{(0,t_2)\times\R} \hat f((t_1,x_1),(t_2,x_2)) M(\rmd t_1,\rmd x_1) M(\rmd t_2,\rmd x_2),  \quad f\in  L^2_2,  \\
J_k(\hat f)_T &:= &  \int_{(0,T]\times\R} J_{k-1}( \hat f(\dots,(t_k,x_k) ))_{t_k-} \,M(\rmd t_k,\rmd x_k), \quad f\in  L^2_k.
\tion

To define  {\it multiple integrals} w.r.t.~$M$, one considers  the linear space ${\cal E}_k$ of simple functions of the form 
\equal \label{L0m}
g(z_1,...,z_k) = \sum_{j_1,...,j_k=1}^n  a_{j_1,...,j_k}   \one_{A_{j_1}\times ...\times A_{j_k}}(z_1,...,z_k), \quad  z_i:=(t_j,x_j),
\tionl
where the $A_1,...,A_n \in\mathcal{B}( [0,T]\times \R)$ are disjoint and such that $\m(A_j)<\infty$ for all $j=1,...,n.$  Moreover, $a_{j_1,...,j_k}$ is zero whenever two or more of the indices
${j_1,...,j_k}$ coincide.  {We then put}
$$ I_k(g)_T :=  \sum_{j_1,...,j_k=1}^n  a_{j_1,...,j_k} M(A_{j_1})\cdots M(A_{j_k}). $$

It holds that $\EE M(A_j) =0$ and $\EE M(A_j)^2 = \m(A_j),$  for  $j=1,...,n,$ and  the $M(A_1),\dots, M(A_n)$ are independent. { In the next lemma we recall some properties of $J_k$ and $I_k.$}

\begin{lemma} \label{iterated-and-multiple} {  Let $k,m\in \N$ and $f \in   L_k^2$, $g \in   L_m^2$. We then have:}
\begin{enumerate}[(i)]
\item   \label{extension}  The space ${\cal E}_k$ is  dense in $L^2_k$, and $I_k:{\cal E}_k \to  L^2(\Pbb)$ is a linear operator. The map $I_k$  has a unique extension  $I_k:L^2_k \to  L^2(\Pbb)$.
{\item  \label{L2-estimate} $\EE I_k(f)_T=0$, $\| I_k(f)_T\|_{L^2(\Pbb)} =  k!  \| \hat f \|_{L_k^2} \le  k!  \|  f \|_{L_k^2},$ and $\EE I_k(f)_T I_m(g)_T =0$ if $m\neq k$.
\item \label{with-hat-or-not} $ I_k(\hat f)_T = I_k(f)_T$ and $ I_k(\hat f)_T = k! J_k(\hat f)_T$.}
\end{enumerate}
\end{lemma}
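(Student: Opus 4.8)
The plan is to reduce every assertion to the class ${\cal E}_k$ of off-diagonal simple functions, on which $I_k$ is given by an explicit finite sum, and then to pass to the $L^2$-limit through an isometry. I would deliberately prove the second-moment computation of part (ii) on ${\cal E}_k$ \emph{first}, since it is precisely this isometry that powers the extension claimed in part (i).

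For the density statement in (i), I would first record that $\m$ is $\sigma$-finite (its time component is Lebesgue measure on $[0,T]$ and its spatial component $\sigma^2\delta_0+\nu$ is a fixed $\sigma$-finite measure), so $\m^{\otimes k}$ is $\sigma$-finite as well. Finite linear combinations of indicators of product sets $A_{j_1}\times\cdots\times A_{j_k}$ with $\m(A_{j_i})<\infty$ are then dense in $L_k^2$ by the standard approximation theorem, and after replacing a finite family by the algebra it generates one may take the $A_j$ pairwise disjoint. The only substantive point is the off-diagonal constraint $a_{j_1,\dots,j_k}=0$ whenever two indices coincide: this restricts approximation to kernels supported off the diagonal $D_k=\{(z_1,\dots,z_k):z_i=z_j\text{ for some }i\neq j\}$. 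But $D_k\subseteq\bigcup_{i<j}\{t_i=t_j\}$, and since the time marginal of $\m$ is non-atomic, $\m^{\otimes k}(D_k)=0$. Hence every element of $L_k^2$ agrees $\m^{\otimes k}$-a.e.\ with one vanishing on $D_k$, and ${\cal E}_k$ is dense; linearity of $I_k$ on ${\cal E}_k$ is immediate from the defining sum.

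For (ii) on ${\cal E}_k$, writing $f=\sum a_{j_1\dots j_k}\one_{A_{j_1}\times\cdots\times A_{j_k}}$ and $g=\sum b_{i_1\dots i_m}\one_{\cdots}$, I would expand $\EE[I_k(f)_T\,I_m(g)_T]$ into a sum of expectations of products of the $M(A_\ell)$. Because within each tuple the indices are distinct, every set $A_\ell$ occurs at most twice (at most once per factor); by independence and $\EE M(A_\ell)=0$, any term in which some set occurs exactly once vanishes, so the surviving terms are exactly those in which the two index-tuples coincide as sets. This forces $k=m$ and contributes $\prod_\ell\EE M(A_\ell)^2=\prod_\ell\m(A_\ell)$, and summing over the $k!$ bijections matching the tuples gives $\EE[I_k(f)_T\,I_m(g)_T]=\delta_{k,m}\,k!\,\langle\hat f,\hat g\rangle_{L_k^2}$; in particular $\EE I_k(f)_T=0$ (all factors independent and mean-zero) and the orthogonality for $k\neq m$ follow at once. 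The bound $\|\hat f\|_{L_k^2}\le\|f\|_{L_k^2}$ comes from the triangle inequality applied to \eqref{eq:symm}, each summand $f\circ\pi$ having the same norm as $f$ by the permutation-invariance of $\m^{\otimes k}$. Having this isometry on the dense subspace ${\cal E}_k$, the unique continuous extension $I_k:L_k^2\to L^2(\Pbb)$ asserted in (i) follows from completeness of $L^2(\Pbb)$, and all identities of (ii) persist under the limit. For (iii), the identity $I_k(\hat f)_T=I_k(f)_T$ is then obtained by applying the isometry to $f-\hat f$: since symmetrisation is idempotent, $\widehat{f-\hat f}=\hat f-\hat{\hat f}=0$, whence $\EE[(I_k(f)_T-I_k(\hat f)_T)^2]=k!\,\|\widehat{f-\hat f}\|_{L_k^2}^2=0$. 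For $I_k(\hat f)_T=k!\,J_k(\hat f)_T$ I would first show, by induction on $k$ and the It\^o isometry for integrals against $M$ (using that $\int_{(0,t]\times\R}h\,\rmd M$ is a martingale with $\EE[(\int h\,\rmd M)^2]=\int h^2\,\rmd\m$), that $J_k$ is $L^2$-continuous with $\EE[J_k(\hat f)_T^2]=\int_{\{t_1<\cdots<t_k\}}\hat f^2\,\rmd\m^{\otimes k}=\tfrac{1}{k!}\|\hat f\|_{L_k^2}^2$, the last equality using the symmetry of $\hat f$ and the nullity of the diagonal. Then I would verify $I_k(\hat f)_T=k!\,J_k(\hat f)_T$ directly on ${\cal E}_k$, where the full off-diagonal cube splits into the $k!$ strict orderings of the time coordinates, each contributing the same value as the simplex integral $J_k$ because $\hat f$ is symmetric and the $M(A_\ell)$ commute; both sides being $L^2$-continuous in $\hat f$ and agreeing on the dense set of simple symmetric kernels, the identity extends to all of $L_k^2$.

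I expect the main obstacle to be the bookkeeping in the matching argument of (ii), and in (iii) the careful passage from the iterated integral over the strict time-simplex to the full symmetric integral. Both rest on the single measure-theoretic fact that $D_k$ is $\m^{\otimes k}$-null, which is exactly what makes the off-diagonal simple functions simultaneously dense in $L_k^2$ and compatible with the simplex decomposition underlying $J_k$.
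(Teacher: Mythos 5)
Parts (i) and (ii) of your proposal, and the identity $I_k(\hat f)_T=I_k(f)_T$ in (iii), are correct and essentially the paper's route made self-contained: the paper outsources the density of ${\cal E}_k$ and the isometry/orthogonality on simple kernels to It\^o \cite{I51} and then extends by linearity and continuity, which is exactly what you carry out; your derivation of $I_k(\hat f)_T=I_k(f)_T$ by applying the isometry to $f-\hat f$ is a clean alternative to the paper's direct expansion of the symmetrisation. (Your formula $\EE[I_k(f)_TI_m(g)_T]=\delta_{k,m}\,k!\,\langle\hat f,\hat g\rangle_{L_k^2}$ is the correct isometry; it is consistent with the lemma's display once the norms there are read as squared.)

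The genuine gap is in the second identity of (iii), $I_k(\hat f)_T=k!\,J_k(\hat f)_T$, at the step you claim ``directly on ${\cal E}_k$''. For a general simple kernel the multiple integral is the algebraic product $M(A_{j_1})\cdots M(A_{j_k})$, and the assertion that the off-diagonal cube ``splits into the $k!$ strict orderings of the time coordinates, each contributing the simplex integral'' is precisely the nontrivial content of the identity: it amounts to writing a product of random variables as a sum of iterated stochastic integrals, and this does not follow from the symmetry of $\hat f$ together with the commutativity of the $M(A_\ell)$ (these are real-valued random variables, so commutativity says nothing). Moreover, the measure-theoretic fact you lean on --- that the time-diagonal $D_k$ is $\m^{\otimes k}$-null --- concerns the wrong measure: the \emph{random} measure does charge the time-diagonal through its jump part (the square bracket in \eqref{square-brackets} is an integral against $\mathcal{N}(\rmd t,\rmd x)$, not against its compensator), and the diagonal contribution to $M(A_{j_1})\cdots M(A_{j_k})$ vanishes only because the $A_j$ are \emph{pairwise disjoint sets}, so that a single atom $(s,\Delta L_s)$ of $\mathcal{N}$ cannot lie in two of them; this is a vanishing-covariation argument obtained from It\^o integration by parts, not a null-set argument. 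The paper sidesteps all of this, following Lee--Shih \cite{LeeShih04}: it proves the identity only for simple kernels whose constituent sets are \emph{time-ordered} (supported in $\Delta_{\,T}(k)$), for which $J_k(\hat g)_T=\frac{1}{k!}\sum_j a_j M(A_1^j)\cdots M(A_k^j)$ holds directly from the definition of the iterated integral, with no stochastic calculus at all; the general case then follows from the $L^2$-continuity of both sides (which you did establish) and the density of symmetrised time-ordered kernels. To close your gap, either adopt this device (adding the short density argument for time-ordered kernels, obtained by refining the time axis), or carry out the integration-by-parts computation showing that all covariation brackets vanish for pairwise disjoint sets.
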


\begin{proof} \eqref{extension}  By  \cite[Theorem 2.1]{I51} we have that   ${\cal E}_k$ is  dense in $L^2_k.$
In \cite{I51} it is also shown that \eqref{L2-estimate} holds for all $f \in {\cal E}_k$, and therefore, by linearity and continuity, $I_k$ has a unique extension to $L^2_k$. Also \eqref{with-hat-or-not} is first shown for all $f \in {\cal E}_k$ and then extended to $L^2_k$.
For convenience and later use we recall the proof of the relation between $I_k$ and $J_k$ in \eqref{with-hat-or-not}. We follow \cite{LeeShih04}  and  start 

assuming $g (z_1,...,z_k) =  \sum_{j=1}^N a_j  \one_{A_1^j \times ... \times     A_k^j} (z_1,...,z_k) $
for measurable and  pairwise disjoint $A_1^j, ...,  A_k^j $ satisfying $\m(A^j_l)<\infty$ for  $l=1,...,k$. Moreover, we assume $g(z_1,...,z_k) =0$ for $(z_1,...,z_k) \notin  \Delta_{\,T}(k)$, where 
\equal \label{Delta}
\Delta_{\,T}(k):  =\{((t_1,x_1),\ldots,(t_k,x_{k})): \ 0 \le t_1  <\ldots  < t_k < T,  \,x_1,...,x_k \in \Rbb\}
\tionl
so that, if   $(t_1,x_1) \in A^j_i$ and $(t_2,x_2) \in A^j_l$, then  $t_1<t_2$  for  $i<l.$  By \eqref{eq:symm} and the definition of $g$, we get 
 \equa
I_k(\hat g)_T &=& I_k \left (\frac{1}{k!} \sum_{\pi \in {\tt S}_k}  \sum_{j=1}^N a_j \one_{A_{\pi(1)}^j \times ... \times     A_{\pi(k)}^j} \right )_T\\
&=& \frac{1}{k!} \sum_{\pi \in {\tt S}_k}  \sum_{j=1}^N a_j M(A_{\pi(1)}^j)\cdots M (A_{\pi(k)}^j)     
=  \sum_{j=1}^N a_j M(A_1^j)\cdots M (A_k^j)  = I_k(g)_T.     
\tion
On the other hand, the definition of the iterated integrals yields $$J_k(\hat g)_T =\frac{1}{k!} \sum_{j=1}^N a_j M(A_1^j)\cdots M (A_k^j) = \frac{1}{k!}I_k(g)_T.$$
This extends to symmetric kernels $\hat  f  \in L_k^2,$ 
hence the iterated integrals and the multiple integrals are related by $ I_k(\hat f)_T = k! J_k(\hat f)_T. $
\end{proof}    
We recall It\^o's chaos expansion result:

\begin{theorem}[\cite{I56}] There exists for any 
$\xi \in L_2(\Om,\mathcal{F},\mathbb{P})$  a unique  chaos expansion 
\equa
\xi=\sum_{k=0}^\infty I_k(\hat f)_T,
\tion
where  $\hat f \in L_k^2,$ and $I_0$ is the identity map on $\R.$
\end{theorem}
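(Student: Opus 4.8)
The statement is the classical It\^o chaos decomposition, so I would organise the argument around the closed \emph{chaos spaces} generated by the $I_k$ and reduce everything to a single completeness assertion. For $k\ge1$ let $\mathcal{H}_k:=I_k(L^2_k)\subseteq L^2(\Pbb)$ and set $\mathcal{H}_0:=\R$. By Lemma \ref{iterated-and-multiple}\eqref{L2-estimate} the map $I_k$, restricted to the (closed) subspace of symmetric kernels, is an isometry up to the factor $k!$, so each $\mathcal{H}_k$ is a closed subspace of $L^2(\Pbb)$, and the same lemma gives $\mathcal{H}_k\perp\mathcal{H}_m$ for $k\neq m$. Hence the orthogonal sum $\mathcal{H}:=\overline{\bigoplus_{k\ge0}\mathcal{H}_k}$ is well defined, and for $\xi\in\mathcal{H}$ the orthogonal projections onto the $\mathcal{H}_k$ produce symmetric kernels $\hat f_k\in L^2_k$ with $\xi=\sum_{k\ge0}I_k(\hat f_k)_T$ in $L^2(\Pbb)$. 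Uniqueness is then immediate: if $\sum_k I_k(\hat f_k)_T=0$, orthogonality forces every summand to vanish, and $\|I_k(\hat f_k)_T\|_{L^2(\Pbb)}=k!\,\|\hat f_k\|_{L^2_k}$ yields $\hat f_k=0$. Everything therefore reduces to proving the completeness $\mathcal{H}=L^2(\Om,\mathcal{F}_T,\Pbb)$.

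The completeness is the heart of the matter, and I would establish it through stochastic (Dol\'eans--Dade) exponentials. For $g$ bounded with support of finite $\m$-measure and $1+g(t,x)>0$, the exponential martingale $Z^g$ solving $Z^g_t=1+\int_{(0,t]\times\R}Z^g_{s-}\,g(s,x)\,M(\rmd s,\rmd x)$ is square integrable; iterating this equation along the time simplex identifies its terminal value with the series of iterated integrals $Z^g_T=\sum_{k\ge0}J_k(g^{\otimes k})_T=\sum_{k\ge0}\tfrac1{k!}I_k(g^{\otimes k})_T$, where I used $I_k(\hat f)_T=k!\,J_k(\hat f)_T$ and the symmetry of $g^{\otimes k}$. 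In particular $Z^g_T\in\mathcal{H}$, so it suffices to prove that the family $\{Z^g_T\}$ is total in $L^2(\Om,\mathcal{F}_T,\Pbb)$. Since $\mathcal{F}_T$ is the $\Pbb$-augmented $\sigma$-field generated by $L$, equivalently by the values $M(A)$, this totality follows by a monotone-class argument once one checks that these exponentials separate the increments: allowing complex $g$ reproduces, via the same exponential formula, the Fourier characters $\exp(\rmi\sum_j\theta_j M(A_j))$ for disjoint $A_j$, whose closed linear span is all of $L^2(\mathcal{F}_T)$.

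An alternative and perhaps more transparent route to completeness is polynomial. Each increment $M(A_j)$ with $\m(A_j)<\infty$ is a sum of independent Gaussian and compensated-Poisson variables, hence has exponential moments and is moment-determinate; consequently polynomials in finitely many independent increments $M(A_1),\dots,M(A_n)$ are dense in $L^2(\mathcal{F}_T)$, and every such polynomial already lies in $\bigoplus_k\mathcal{H}_k$, because on disjoint sets the multiple integrals $I_l(\one_{A_j}^{\otimes l})$ are, up to a triangular Hermite/Charlier change of basis, exactly the powers of $M(A_j)$. Either way, I expect the main obstacle to be precisely this completeness step, and within it the Poisson contribution. The Gaussian part is classical (Hermite polynomials as generating exponentials), so the delicate point is controlling the jump terms: justifying the $L^2$-convergence of the chaos series of $Z^g_T$ (respectively the moment-determinacy underlying polynomial density) and the passage from the generating $\sigma$-field to the closed span of multiple integrals. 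This is exactly why one restricts to sets of finite $\m$-measure and to suitably integrable $g$.
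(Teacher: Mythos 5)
The first thing to note is that the paper contains no proof of this statement at all: it is quoted as It\^o's classical chaos expansion and attributed to \cite{I56}, so there is no internal argument to compare yours against. Judged on its own, your skeleton is the standard modern proof and is sound: the chaos spaces $\mathcal{H}_k=I_k(L^2_k)$ are closed (scaled isometry on symmetric kernels) and mutually orthogonal by Lemma \ref{iterated-and-multiple}\,\eqref{L2-estimate}, uniqueness follows from orthogonality, and everything reduces to completeness, which one attacks either through stochastic exponentials or through polynomials in the increments $M(A)$; the latter is close in spirit to It\^o's original argument and to the treatment in \cite{DTE16}.

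However, two steps in your completeness argument are wrong as literally stated, though both are reparable. First, the Dol\'eans--Dade exponential of $\rmi\theta\,\one_A\cdot M$ is \emph{not} $\exp(\rmi\theta M(A))$: in the presence of jumps, $\mathcal{E}(X)_T=\exp\big(X_T-\tfrac12\langle X^c\rangle_T\big)\prod_{s\le T}(1+\Delta X_s)\rme^{-\Delta X_s}$, so complex integrands do not ``reproduce the Fourier characters $\exp(\rmi\sum_j\theta_jM(A_j))$'' as you claim. What is true, and what saves the argument, is that choosing $g=\sum_j(\rme^{\rmi\theta_j}-1)\one_{A_j}$ on the jump part makes $\mathcal{E}(g\cdot M)_T$ a deterministic multiple of $\exp\big(\rmi\sum_j\theta_j\mathcal{N}(A_j)\big)$, and similarly on the Gaussian part; these characters generate the same $\sigma$-algebra (mod null sets) as the increments, so totality follows. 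Second, in the mixed Gaussian-plus-jump case there is no triangular Hermite/Charlier relation between $\{I_l(\one_A^{\otimes l})\}_l$ and the powers $\{M(A)^l\}_l$: already $I_2(\one_A^{\otimes 2})=M(A)^2-\tilde{\mathcal{N}}(A_0)-\m(A)$, where $A_0$ is the jump part of $A$, so $I_2(\one_A^{\otimes 2})$ is a polynomial in the \emph{two} variables $M(A)$ and $M(A_0)$, not in $M(A)$ alone. The inclusion you actually need --- that monomials $M(A_1)^{p_1}\cdots M(A_n)^{p_n}$ lie in $\bigoplus_k\mathcal{H}_k$ --- remains correct, but it should be justified by induction via the product formula for indicator kernels (the two-factor formula of \cite{LeeShih04}, or Step 1 in the proof of Theorem \ref{L2condition-thm}), which keeps both the Gaussian and the jump contributions in the bookkeeping. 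With these two repairs, either of your routes to completeness closes the proof.
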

Our  aim is  to show a product formula, that means we want to  find  conditions on the $(f^{(j)})_{j=1}^N,$ where $f^{(j)} \in    L_{k_j}^2,$   such  that the {\it product}
$$ \prod_{j=1}^N  I_{k_j}(\hat f^{(j)})_T $$
can be written as a  {\it  sum} of square integrable multiple integrals.

\section{{ Compensated-covariation stability and martingale products}}\label{sec:com.cov.st}
 To state a representation result for products of martingales, we first need to recall the concept of compensated covariation stable families (see \cite[Definition 3.1]{DTE15}):  Let $\Lm$ be an arbitrary parameter set and $\Xscr=\{X\p\al,\ \al\in\Lm\}$  a family of square integrable martingales. Set 
 $$X\p{\al_{1:2}}:=[X\p{\al_1},X\p{\al_2}]-\aPP{X\p{\al_1}}{X\p{\al_2}}, \quad \al_1,\al_2\in\Lm.$$ Then $\Xscr$ is called compensated-covariation stable if  $X\p{\al_{1:2}}\in\Xscr$ holds for any $\al_1,\al_2\in\Lm$. Let $\Xscr$ be  such a family. For $\al_1,\ldots,\al_k\in\Lm$ we can recursively define
 \equal \label{iteration-comp-cov}
 X^{\al_{j_{1:k}}}:= [X^{\al_{j_{1:(k-1)}}},X^{\al_{j_k}}]- \langle X^{\al_{j_{1:(k-1)}}},X^{\al_{j_k}}\rangle. 
\tionl
For a compensated-covariation stable family $\Xscr$  the following representation formula holds:
\begin{proposition}[{\cite[Proposition 3.3]{DTE15}}]\label{prop:rep.pol}
Let  $\{X\p{\al},\ \al\in\Lm\},$  be a compensated-covariation stable family of quasi-left continuous square integrable martingales. For every $N\geq1$ and $\al_1,\ldots,\al_N\in \Lm$, we have
\equal \label{eq:rep.pol}
 \prod_{i=1}\p N X\p{\al_{i}}_t&=&
\sum_{i=1}\p N\sum_{{1\leq j_1<\ldots<j_i\leq N}} \int_0^t \Bigl(  \prod_{\begin{subarray}{c}k=1\\k\neq j_1,\ldots,j_i\end{subarray}}\p N X\p{\al_k}_{s-}\Bigr)
d X\p{\al_{j_{1:i}}}_s \notag
\\&&+
\sum_{i=2}\p N\sum_{1\leq j_1<\ldots<j_i\leq N} \int_0^t \Bigl(\prod_{\begin{subarray}{c}k=1\\k\neq j_1,\ldots,j_i\end{subarray}}\p N X\p{\al_k}_{s-}\Bigr)d
\langle X\p{\al_{j_{1: {i-1}}}}{X\p{\al_{j_{i}}}}\rangle_s. 
\tionl
\end{proposition}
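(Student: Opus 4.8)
My plan is to prove the identity by induction on the number of factors $N$, using the It\^o integration-by-parts formula for semimartingales as the sole analytic input, with compensated-covariation stability guaranteeing that all integrators stay inside $\Xscr$ and quasi-left continuity guaranteeing that the predictable brackets are continuous. For $N=1$ the right-hand side collapses to the single term $i=1$, $j_1=1$, whose integrand is the empty product equal to $1$; since $X\p{\al_1}_0=0$ this term is $\int_0^t\rmd X\p{\al_1}_s=X\p{\al_1}_t$ and the second sum is empty, so the base case holds.

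For the inductive step I set $P_t:=\prod_{i=1}\p{N-1}X\p{\al_i}_t$ and apply the product rule
\beqs
P_t\,X\p{\al_N}_t=\int_0^t P_{s-}\,\rmd X\p{\al_N}_s+\int_0^t X\p{\al_N}_{s-}\,\rmd P_s+[P,X\p{\al_N}]_t.
\eeqs
The first term is immediately the $i=1$, $j_1=N$ summand of the first sum in the $N$-factor formula. Into the last two terms I insert the inductive hypothesis, which presents $P$ as a semimartingale whose martingale part is the sum of the stochastic integrals against the $X\p{\al_{j_{1:i}}}$ with indices in $\{1,\ldots,N-1\}$, and whose finite-variation part is the corresponding sum of compensator integrals. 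Carrying $X\p{\al_N}_{s-}$ into the integrands of $\int_0^t X\p{\al_N}_{s-}\,\rmd P_s$ then reproduces exactly the summands of the two sums in which the index $N$ lies inside the \emph{product} factor, that is, those whose integrator/compensator subset does not contain $N$.

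The crux is the bracket term. Quasi-left continuity makes the finite-variation part of $P$ continuous, so it does not contribute to $[P,X\p{\al_N}]$, and the rule $[\int H\,\rmd X,Z]=\int H\,\rmd[X,Z]$ gives
\beqs
[P,X\p{\al_N}]_t=\sum_{i=1}\p{N-1}\sum_{1\leq j_1<\ldots<j_i\leq N-1}\int_0^t\Bigl(\prod_{\begin{subarray}{c}k=1\\k\neq j_1,\ldots,j_i\end{subarray}}\p{N-1}X\p{\al_k}_{s-}\Bigr)\rmd[X\p{\al_{j_{1:i}}},X\p{\al_N}]_s.
\eeqs
Applying the recursive definition \eqref{iteration-comp-cov} with last index $N$, namely $[X\p{\al_{j_{1:i}}},X\p{\al_N}]=X\p{\al_{j_{1:(i+1)}}}+\langle X\p{\al_{j_{1:i}}},X\p{\al_N}\rangle$ where the appended index is $N$, and splitting each integral accordingly, yields on the one hand the first-sum summands whose integrator subset has maximum $N$ (an index shift $i\mapsto i+1$, matching the leading IBP term for the singleton $\{N\}$), and on the other hand the second-sum summands whose compensator subset has last element $N$.

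The step I expect to be the main obstacle is the combinatorial bookkeeping: one must verify that the leading IBP term, the martingale and finite-variation contributions of $\int X\p{\al_N}_{s-}\,\rmd P_s$, and the two halves of the bracket together partition the index subsets of $\{1,\ldots,N\}$ according to the location of $N$ (inside the product, or as the maximal integrator index, or as the maximal compensator index), so that every subset occurs exactly once and the ordering $j_1<\ldots<j_i$ survives the appending of $N$. The remaining analytic points are routine: the vanishing of the continuous finite-variation part in the bracket (the only place quasi-left continuity is needed) and the well-definedness of the iterated integrals, which holds because the integrands $\prod X\p{\al_k}_{s-}$ are left-continuous and locally bounded while the integrators are square-integrable martingales from $\Xscr$.
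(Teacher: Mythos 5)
Your proof is correct: the induction on $N$ via It\^o's integration-by-parts formula, the splitting of $[P,X\p{\al_N}]$ through the recursion \eqref{iteration-comp-cov}, and the use of quasi-left continuity to make the predictable brackets (hence the finite-variation part of $P$) continuous and thus bracket-negligible, together with the subset bookkeeping according to whether $N$ sits in the product, in the integrator set, or as the terminal compensator index, is a complete argument. The paper itself gives no proof of this proposition---it is quoted from \cite[Proposition 3.3]{DTE15}---and your argument is essentially the one given in that cited source, so there is nothing to add.
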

Using the random measure $M$ from \S\ref{sec:mu.in}, we construct compensated-covariation stable families of martingales: 
For $ \al \in  L^2_1$, we define the square integrable martingale 
    \equa
   Y^\al_t:= \sigma \int_0^t \al(s,0)  dW_s + \int_{[0,t]\times \Rbb_0} \al(s,x)\tilde{\mathcal{N}}(\rmd s,\rmd x), \quad t\in [0,T].
 \tion
 We will call these processes \emph{Engelbert martingales}, they are a generalisation of the processes introduced in \cite[Equation (39)]{DTE16}.
 It holds  $$ [Y^{\al_1}, Y^{\al_2}]_t=  \sigma^2 \int_0^t \al_1(s,0) \al_2(s,0) ds +  \int_{[0, t]\times\Rbb_0} \al_1(s,x)\al_2(s,x)\,\mathcal{N}(\rmd s,\rmd x) $$
and
$$ \langle Y^{\al_1}, Y^{\al_2}\rangle_t= \sigma^2 \int_0^t \al_1(s,0) \al_2(s,0) ds +   \int_{[0, t]\times\Rbb_0} \al_1(s,x)\al_2(s,x)\, \rmd s \nu(\rmd x). $$

For $\Lm=\bigcap_{p\geq2} L^p_1$, it is clear that the family $\Xscr := \{Y^\al: \al \in \Lm \}$ is compensated-covariation stable. 
Note that, for $Y\p{\al_1},\ldots,Y\p{\al_k}\in\Xscr$, $k \ge 2$, we get the following identity 
\begin{equation*} 
Y\p{\al_{1:k}}_t:=[Y\p{\al_{1:(k-1)}},Y\p{\al_{k}}]_t-\pb{Y\p{\al_{1:(k-1)}}}{Y\p{\al_{k}}}_t=\int_{[0,t]\times\Rbb_0} \,\,  \bigg(\prod_{j=1}^k\al_j(s,x)\bigg)   \tilde{\mathcal{N}}(\rmd s,\rmd x).
\end{equation*}

\begin{lemma} The linear hull of the iterated integrals
\equa
 \Big \{ (J_k(\hat f)_t)_{t\in [0,T]}:   f =  {\al_1\otimes...\otimes \al_k}\quad \text{with} \,\, \al_1,...,\al_k \in \bigcap_{p\geq2} L^p_1, \,\, k=0,1,... \Big \}
 \tion
is compensated-covariation stable.
\end{lemma}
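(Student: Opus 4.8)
The plan is to exploit bilinearity and then induct on the total order. Since both $[\cdot,\cdot]$ and $\langle\cdot,\cdot\rangle$ are bilinear, so is $(U,V)\mapsto[U,V]-\langle U,V\rangle$; hence it suffices to show that for two generators $P=J_j(\hat f)$ and $Q=J_l(\hat g)$ (with $f,g$ tensor products of functions in $\Lm=\bigcap_{p\ge2}L^p_1$) the compensated covariation $[P,Q]-\langle P,Q\rangle$ lies in the linear hull $\mathcal H$. The order-one case is exactly the stability of $\Xscr$ already recorded: as $J_1(\al)=Y^\al$, one has $[Y^\al,Y^\bt]-\langle Y^\al,Y^\bt\rangle=Y^{\al\bt\one_{\{x\neq0\}}}=J_1(\al\bt\one_{\{x\neq0\}})\in\mathcal H$, the pointwise product $\al\bt\one_{\{x\neq0\}}$ lying in $\Lm$ precisely because $\bigcap_{p\ge2}L^p_1$, unlike $L^2_1$, is closed under multiplication.

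First I would unfold one step of the recursion defining $J_k$ to write $P_\cdot=\int_{(0,\cdot]\times\R}\Phi\,\rmd M$ and $Q_\cdot=\int_{(0,\cdot]\times\R}\Psi\,\rmd M$, where the ``pull out one factor'' identity for a symmetrised tensor kernel gives $\Phi(s,x)=\tfrac1j\sum_r\al_r(s,x)P^{(r)}_{s-}$ and $\Psi(s,x)=\tfrac1l\sum_q\bt_q(s,x)Q^{(q)}_{s-}$, with $P^{(r)},Q^{(q)}$ generators of orders $j-1,l-1$. Computing the covariation of two integrals against $M=\sig\,\rmd W\,\dt_0+\tilde{\mathcal N}$, the Brownian part and the compensator of the jump part cancel against $\langle P,Q\rangle$, leaving the single compensated jump integral
\[
[P,Q]_t-\langle P,Q\rangle_t=\int_{(0,t]\times\Rbb_0}\Phi(s,x)\Psi(s,x)\,\tilde{\mathcal N}(\rmd s,\rmd x).
\]
Its integrand is a sum of terms $(\al_r\bt_q)(s,x)\,P^{(r)}_{s-}Q^{(q)}_{s-}$, and here $\al_r\bt_q\one_{\{x\neq0\}}\in\Lm$ vanishes at $x=0$, so integrating it against $\tilde{\mathcal N}$ amounts to adjoining a new outermost factor to an iterated integral.

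The crux is therefore an auxiliary claim, proved by induction on the total order: for generators $P',Q'$ and any $h\in\Lm$ with $h(\cdot,0)=0$, the process $\int_{(0,\cdot]\times\Rbb_0}h\,P'_{s-}Q'_{s-}\,\tilde{\mathcal N}$ belongs to $\mathcal H$; the displayed identity then reduces the covariation claim at total order $n$ to this claim at order $n-2$. To prove it I would peel off the outermost integrator of $Q'$ and, using integration by parts (equivalently, the product representation of Proposition \ref{prop:rep.pol}), decompose the product process $P'Q'$ into (i) lower-order iterated integrals, (ii) deterministic processes, and (iii) running predictable compensators $\int_0^\cdot(\text{lower iterated integral})_{s-}\,\rmd\langle Y^\cdot,Y^\cdot\rangle_s$. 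Substituting $P'_{s-}Q'_{s-}$ and integrating termwise against $h\,\tilde{\mathcal N}$, contributions of type (i) become iterated integrals of one higher order, and those of type (ii) become first-order integrals with kernel $h\cdot(\text{deterministic})\in\Lm$.

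The genuine obstacle is type (iii), which at first sight is not of iterated-integral form. The identity that closes the induction is that the deterministic running increment splits, $\int_v^s\rho(u)\,\rmd u=P(s)-P(v)$ with $P(t)=\int_0^t\rho$; by Fubini this yields $\int_0^s L_{u-}\rho(u)\,\rmd u=P(s)L_{s-}-\tilde L_{s-}$, where $L$ is the lower iterated integral and $\tilde L$ is the same integral with $P$ absorbed into its innermost kernel (still in $\Lm$). After the outer $\tilde{\mathcal N}$-integration these two pieces are genuine iterated integrals, with outer kernels $hP$ and $h$ respectively, hence in $\mathcal H$. Every kernel produced along the way is a finite pointwise product of functions from $\bigcap_{p\ge2}L^p_1$ and is therefore square integrable, so all the integrals exist in $L^2$ and no integrability condition is violated — exactly the point that fails for a generic jump kernel, as the compensated-Poisson example in the introduction illustrates. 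Carrying this bookkeeping through the induction gives $[P,Q]-\langle P,Q\rangle\in\mathcal H$, and by bilinearity the linear hull is compensated-covariation stable.
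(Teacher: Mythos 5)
Your opening matches the paper's proof exactly: bilinearity reduces the claim to two generators, the pull--out--one--factor identity is correct, and the bracket computation (the paper's \eqref{square-brackets} and \eqref{angle-brackets}) shows that the Gaussian parts and the jump compensators cancel, leaving
\[
[P,Q]_t-\langle P,Q\rangle_t=\frac{1}{jl}\sum_{r,q}\int_{(0,t]\times\Rbb_0}(\al_r\bt_q)(s,x)\,P^{(r)}_{s-}Q^{(q)}_{s-}\,\tilde{\mathcal{N}}(\rmd s,\rmd x),
\]
so that everything hinges on resolving the products $P^{(r)}_{s-}Q^{(q)}_{s-}$ of lower-order iterated integrals. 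At this point the paper simply cites the two-factor product formula of Lee and Shih \cite[Theorem 3.5 and eq.~(21)]{LeeShih04}, which writes such a product as a combination of multiple integrals with tensor-built kernels; you instead try to produce the resolution by your own induction. That would make the lemma self-contained, which is a worthwhile goal, but your induction does not close.

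Here is the gap. Your inductive hypothesis is a membership statement about the single expression $\int_{(0,\cdot]\times\Rbb_0}h\,P'_{s-}Q'_{s-}\,\tilde{\mathcal{N}}$ with $h(\cdot,0)=0$. But the integration-by-parts decomposition of $P'Q'$ that you substitute into it contains, besides your types (i)--(iii), the cross terms $\int_0^{\cdot}P'_{s-}\rmd Q'_s=\frac{1}{l'}\sum_q\int_0^{\cdot}\bt_q(s,x)P'_{s-}Q'^{(q)}_{s-}\,M(\rmd s,\rmd x)$ and its mirror image. These are stochastic integrals of \emph{products} of iterated integrals against $M$, with kernels $\bt_q$ that need not vanish at $x=0$; they are not lower-order iterated integrals, not deterministic, not compensators, and they fall outside your hypothesis (wrong integrator, wrong kernel class), so the induction cannot be applied to them. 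The same mismatch occurs in your type (iii): $\langle P',Q'\rangle=\sum_{r,q}\int_0^{\cdot}\rho_{rq}(s)\,P'^{(r)}_{s-}Q'^{(q)}_{s-}\,\rmd s$ involves a product of \emph{two} lower-order integrals, while your Fubini identity treats $\int_0^{\cdot}\rho(s)L_{s-}\,\rmd s$ for a \emph{single} iterated integral $L$. To close the induction you must upgrade the hypothesis to a decomposition statement: for generators of total order at most $n$, the product process $P'_tQ'_t$ equals a finite sum $\sum_i c_i(t)R_i(t)$ with bounded deterministic $c_i$ and $R_i$ iterated integrals with tensor kernels. That statement is precisely the process-level two-factor product formula, i.e.\ the external ingredient the paper invokes; your sketch presupposes its conclusion (in the shape of the decomposition (i)--(iii)) rather than proving it.

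A second issue, which you share with the paper's very condensed proof but on which you lean explicitly: the assertion that adjoining an outermost kernel, i.e.\ forming $\int_0^{\cdot}h(s,x)\,J_k(\hat u)_{s-}\,M(\rmd s,\rmd x)$, lands in $\mathcal{H}$. For the hull as literally defined in the statement (spans of $J_k(\hat f)$ with \emph{symmetrized} tensor kernels) this is false in general: that process is the iterated integral of the non-symmetric kernel $\hat u\otimes h$ over the simplex. Concretely, for the compensated Poisson process $L_t=\mathcal{N}_t-t$, take $P=\int_0^{\cdot}L_{s-}\rmd L_s=J_2(\al^{\otimes2})$ with $\al\equiv1$ and $Q=J_1(\iota)$ with $\iota(s,x)=s$; then $[P,Q]-\langle P,Q\rangle=\int_0^{\cdot}sL_{s-}\rmd L_s$, whose second-chaos kernel is $\frac12\max(s_1,s_2)$. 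This kernel has infinite rank, so by chaos orthogonality and the isometry it is not a finite linear combination of symmetrized tensor products. The stability claim therefore only makes sense for the larger hull spanned by iterated integrals of arbitrary (not necessarily symmetrized) tensor kernels; the objects produced by Lee--Shih's formula, and by your construction, do lie in that larger hull. If you restate your auxiliary claim as the decomposition above and work in that larger hull, your strategy can be carried through; as written, both the inductive step and the target space have genuine holes.
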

\begin{proof}For symmetric kernels $f$ and $g$ we have
\equal  \label{square-brackets}
 [J_n(f), J_k(g)]_T &=&  \int_{(0,T]\times\R} J_{n-1}(f(\dots,(t,x)) )_{t-}\,\, J_{k-1}(g(\dots,(t,x)) )_{t-}  \notag\\
 &&\hspace{15em} \times (\sigma^2 \rmd t \delta_0(\rmd x) + \mathcal{N}(\rmd t,\rmd x))
 \tionl
and 
\equal  \label{angle-brackets}
\langle J_n(f), J_k(g)\rangle_T  & =&  \int_{(0,T]\times\R} J_{n-1}(f(\dots,(t,x)) )_{t-} \, J_{k-1}(g(\dots,(t,x) ))_{t-}  \notag \\
&&\hspace{15em} \times (\sigma^2 \rmd t \delta_0(\rmd x)  + \rmd t d\nu(x))
\tionl
Now we use the product formula for two factors from Lee and Shih \cite[Theorem 3.5]{LeeShih04} (or \cite{DoPecc18} for the pure jump case)  which states that the product of two multiple integrals  of order $n$ and $k$ 
is equal to a linear combination of multiple integrals of order less or equal to $n+k$.   Moreover, from  \cite[equation $(21)$]{LeeShih04} it can be seen that the kernels
of these multiple integrals are built from tensor products of functions from $\bigcap_{p\geq2} L^p_1.$
\end{proof}   


\section{Product formulas}\label{sec:fi.pro.for}
\subsection{Product formula for iterated integrals}\label{subsec:prod.it.int}
The representation formula \eqref{eq:rep.pol} will be used  to  derive a product formula  for $ \prod_{j=1}^N  J_{m_j}( f^{(j)})_T.$   First we consider a special case  assuming that  the kernels are given by tensor products: $f^{(j)}  = \al_j^{\otimes m_j}$  with $\al_j  \in \bigcap_{p\geq2} L^p_1.$  We introduce some notation.
Let 
\equal \label{the s-set}
{\cS} := \{{\bf s}=\{j_1,...,j_i\}: 1\le j_1<...<j_i\le N  \,\, \text{for} \,\, i=1,...,N\}.
\tionl
Then $\cS = 2^{\{1,...,N\}} \setminus \{\emptyset\}$ has $\sn :=2^N-1$ elements  which we denote by
\equal \label{the en-set}
\cS=  \{{\bf e}_1,...,{\bf e}_{\ssn}\}.
\tionl
\begin{proposition}[Product formula for iterated integrals]  \label{product-iterated} Let  $\al_j \in \bigcap_{p\geq2} L^p_1$  for $j=1,...,N$.  Then 
\equal \label{product-to-iterated}
\prod_{j=1}^N J_{m_j}( \al_j^{\otimes m_j}) _T   \!\!&=\!\!\!\!\!& \sum_{ \!\!\! k \le m_1+...+m_N}
\sum_{({\bf s}_1,...,{\bf s}_k)  \in A_k } \,\,
 \int_{\Delta_{\,T}(k)} 
\al_{({\bf s}_1)}(z_1) \cM^{({\bf s}_1)}(dz_1) \ldots       \al_{({\bf s}_k)}(z_k)  \cM^{({\bf s}_k)}( dz_k)  \notag \\
 \tionl 
where $ z_n:= (t_n, x_n),$ and 
$$ \al_{({\bf s}_n)}(z_n) := \prod_{\ell \in {\bf s}_n}  \al_{\ell}(z_n), $$ 
\equal \label{Akset}
A_k:=\left  \{({\bf s}_1,...,   {\bf s}_k) \in \cS^k:  \, \sum_{n=1}^k  \one_{ {\bf s}_n} (j) =m_j,  \,\, \forall j =1,...,N \right \},
\tionl
and $ \Delta_{\,T}(k) $ stands for the 'simplex' defined in \eqref{Delta}.
 For any  ${\bf s}_n \in  \cS$ the integrators are given by (here $|{\bf s}_n| $ denotes the cardinality of the set ${\bf s}_n$)
 \equal \label{Y-explained}
\cM^{({\bf s}_n)}(dz_n) := \left \{ \begin{array}{ll} M(dt_n,dx_n)   & \text{ if }  \, \, |{\bf s}_n| =1,     \\ \\
 \m(dt_n,dx_n)+ \tilde{\mathcal{N}}(dt_n,dx_n)     & \text{ if } \,\,|{\bf s}_n| =2 ,  \\\\
         dt_n\nu(dx_n)     +     \tilde{\mathcal{N}}(dt_n,dx_n)           & \text{ if }\,\,|{\bf s}_n| \ge 3.   \\\\
          \end{array}  \right . 
\tionl
\end{proposition}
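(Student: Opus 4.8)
The plan is to prove \eqref{product-to-iterated} by induction on the total order $m:=m_1+\cdots+m_N$, using the representation formula \eqref{eq:rep.pol} as the multiplicative engine. Writing $X^{(j)}:=J_{m_j}(\al_j^{\otimes m_j})$, I first note that each $X^{(j)}$ is a quasi-left-continuous square integrable martingale (quasi-left-continuity being inherited from the Lévy structure of $M$, whose compensator is continuous), and that the linear hull of such tensor-kernel iterated integrals is compensated-covariation stable by the last lemma of Section~\ref{sec:com.cov.st}. Hence Proposition~\ref{prop:rep.pol} applies to the family $X^{(1)},\dots,X^{(N)}$. For $m=0$ the product is empty and the claim is trivial; for $m=1$ it reduces to $J_1(\al_1)_T$, which is \eqref{product-to-iterated} with $k=1$ and ${\bf s}_1=\{1\}$.

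The technical heart is to identify the iterated compensated covariations $X^{(j_{1:i})}$ and the angle brackets occurring in \eqref{eq:rep.pol}. Using \eqref{square-brackets}--\eqref{angle-brackets} together with the recursion \eqref{iteration-comp-cov}, I will show by induction on $i$ that for every ${\bf s}=\{j_1,\dots,j_i\}\in\cS$
\[
X^{(j_{1:i})}_t=\int_{(0,t]\times\R}\Bigl(\prod_{l=1}^{i}\al_{j_l}(s,x)\Bigr)\Bigl(\prod_{l=1}^{i}J_{m_{j_l}-1}(\al_{j_l}^{\otimes(m_{j_l}-1)})_{s-}\Bigr)\,\tilde{\mathcal{N}}(\rmd s,\rmd x),
\]
the step $i\mapsto i+1$ following because each square bracket merges the kernels multiplicatively, replaces two factors by their product evaluated at the common jump, and compensation turns $\mathcal{N}$ into $\tilde{\mathcal{N}}$ while, for $i\ge2$, the merged martingale becomes purely discontinuous so the Gaussian part drops out. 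Consequently the integrator $\rmd X^{(j_{1:i})}_s$ carries the kernel $\al_{({\bf s})}=\prod_{l}\al_{j_l}$ against $\tilde{\mathcal{N}}$ and lowers each merged order by one, while the accompanying angle-bracket integrator $\rmd\langle X^{(j_{1:(i-1)})},X^{(j_i)}\rangle_s$ carries the \emph{same} reduced-order integrand and kernel against $\sig^2\rmd s\,\delta_0+\rmd s\,\nu=\m$ when $i=2$, and against $\rmd s\,\nu$ alone when $i\ge3$. Summing the martingale and angle-bracket contributions for a fixed ${\bf s}$ therefore reproduces exactly the three cases of $\cM^{({\bf s})}$ in \eqref{Y-explained}: $M$ for $|{\bf s}|=1$, $\tilde{\mathcal{N}}+\m$ for $|{\bf s}|=2$, and $\tilde{\mathcal{N}}+\rmd t\,\nu$ for $|{\bf s}|\ge3$.

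With these identities, applying \eqref{eq:rep.pol} to $X^{(1)},\dots,X^{(N)}$ expresses $\prod_j X^{(j)}_T$ as a sum over merge sets ${\bf s}_k\in\cS$ of integrals $\int_{(0,T]\times\R}\bigl(\prod_{k'\notin{\bf s}_k}X^{(k')}_{s-}\cdot\prod_{l\in{\bf s}_k}J_{m_{j_l}-1}(\al_{j_l}^{\otimes(m_{j_l}-1)})_{s-}\bigr)\,\al_{({\bf s}_k)}(s,x)\,\cM^{({\bf s}_k)}(\rmd s,\rmd x)$, where the bracketed integrand is again a product of tensor-kernel iterated integrals, now of order vector $(m_j-\one_{{\bf s}_k}(j))_{j=1}^{N}$ and hence of total order $m-|{\bf s}_k|<m$ (indices reduced to order $0$ give $J_0=\mathrm{Id}$ and drop out). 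The induction hypothesis rewrites this integrand, as a function of its upper limit $s$, as the sum over the corresponding $A_{k-1}$ of integrals over $\Delta_{\,s}(k-1)$; wrapping it with the outermost integration against $\cM^{({\bf s}_k)}(\rmd z_k)$ and using that the outer time $t_k=s$ is the largest turns $\Delta_{\,s}(k-1)$ into $\Delta_{\,T}(k)$ of \eqref{Delta}. Since appending ${\bf s}_k$ to a tuple admissible for the reduced orders yields, via $\sum_{n=1}^{k-1}\one_{{\bf s}_n}(j)=m_j-\one_{{\bf s}_k}(j)$, precisely the constraint $\sum_{n=1}^{k}\one_{{\bf s}_n}(j)=m_j$ defining $A_k$ in \eqref{Akset}, summing over ${\bf s}_k$ and the recursive tuples reconstitutes the right-hand side of \eqref{product-to-iterated}.

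The main obstacle is the combinatorial bookkeeping of this last step: one must verify that the map \emph{(outermost merge set ${\bf s}_k$, tuple for the reduced orders)} $\mapsto$ \emph{tuple for the original orders} is a bijection onto $A_k$, so that each tuple is produced exactly once and carries the correct integrator $\cM^{({\bf s}_n)}$, with no spurious Gaussian or diagonal terms when several factors share a kernel. Everywhere, well-definedness and square-integrability of the arising integrals is secured by the hypothesis $\al_j\in\bigcap_{p\ge2}L^p_1$: by Hölder each merged kernel $\al_{({\bf s}_n)}=\prod_{\ell\in{\bf s}_n}\al_\ell$ again lies in $\bigcap_{p\ge2}L^p_1$ and is thus integrable against $\m$, $\rmd t\,\nu$ and $\tilde{\mathcal{N}}$, so every simplex integral belongs to the appropriate $L^2_k$ and the manipulations above are justified.
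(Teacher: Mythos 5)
Your proposal is correct and takes essentially the same route as the paper's proof: both apply the representation formula \eqref{eq:rep.pol} to $X^{(j)}=J_{m_j}(\al_j^{\otimes m_j})$, identify the compensated covariations and angle brackets (your claimed identities match \eqref{2-compcov}, \eqref{2-predcov}, \eqref{eq_with_N-tilde} and \eqref{eq_with_measure}) to obtain the three cases of $\cM^{({\bf s}_n)}$ in \eqref{Y-explained}, and then unwind recursively until only simplex integrals remain. Your explicit induction on the total order $m_1+\cdots+m_N$, with the last-element decomposition of $A_k$, is simply a formalized version of the paper's ``repeat this procedure until no products of iterated integrals are left'' step.
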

The splitting into the cases $|{\bf s}_n| =1, |{\bf s}_n| =2$ and $|{\bf s}_n| \ge 3$ in \eqref{Y-explained} can be explained by \eqref{square-brackets} and \eqref{angle-brackets}. Note that the terms related to the Brownian part vanish for $|{\bf s}_n| \ge 3$ due to the iterated use of  \eqref{iteration-comp-cov}.
\begin{proof}[Proof of Proposition \ref{product-iterated}.]

We  use \eqref{eq:rep.pol} for
$$ X^{\al_j}_T :=J_{m_j}(\al_j^{\otimes m_j})_T
= \int_{(0,T]\times\R} J_{m_{j}-1}(\al_j^{\otimes (m_j-1)} )_{t-} \,\,\al_j(t,x) M(\rmd t,\rmd x) .$$           
Then   
\equal \label{2-compcov}
 X^{\al_{j_{1:2}}}_T  =   
\int_{(0,T]\times\Rbb_0} J_{m_{j_1}-1}(\al_{j_1}^{\otimes (m_{j_1}-1)} )_{t-}   \,
J_{m_{j_2}-1}(\al_{j_2}^{\otimes (m_{j_2}-1)} )_{t-} \,\al_{j_1}(t,x) \al_{j_2}(t,x) \tilde{\mathcal{N}}(\rmd t,\rmd x).  \notag\\\tionl
Further
\equal \label{2-predcov}
 \pb{X^{\al_{j_1}}}{X^{\al_{j_2}}}_T &= &  
\int_{(0,T]\times\R} J_{m_{j_1}-1}(\al_{j_1}^{\otimes (m_{j_1}-1)} )_{t-} J_{m_{j_2}-1}(\al_{j_2}^{\otimes (m_{j_2}-1)} )_{t-} \,\al_{j_1}(t,x) \al_{j_2}(t,x)  \notag  \\ 
&& \hspace{18em} \times \m(\rmd t,\rmd x) ,\tionl
and for more than two processes  we have
\equal \label{eq_with_N-tilde}
X^{\al_{j_{1:i}}}_T  = \int_{(0,T]\times\Rbb_0} \prod_{k=1}^i  \left (J_{m_{j_k}-1}(\al_{j_k}^{\otimes (m_{j_k}-1)} )_{t-} \, \,
 \al_{j_k}(t,x) \right ) \tilde{\mathcal{N}}(\rmd t,\rmd x),
 \tionl         
\equal \label{eq_with_measure} 
\pb{X^{\al_{{j_1}: (i-1)}}}{X^{\al_{j_i}}}_T= \int_{(0,T]\times\Rbb_0} \prod_{k=1}^i \left  (J_{m_{j_k}-1}(\al_{j_k}^{\otimes (m_{j_k}-1)} )_{t-}  \,\,
 \al_{j_k}(t,x) \right ) \rmd t \nu(\rmd x).  
 \tionl   
Then  \eqref{eq:rep.pol} implies that
\equal \label{first-application}
&& \prod_{j=1}^N J_{m_j}( \al_j^{\otimes m_j})_T  \\
&=\!& \sum_{i=1}^{2^N-1}  \int_{(0,T]\times\R}  \Big (\prod_{\begin{subarray}{c}n=1\\n\notin {\bf e}_i \end{subarray}}^N  J_{m_n}(\al_{n}^{\otimes m_n} )_{t-}  \Big)  \Big ( \prod_{j_k\in {\bf e}_i } J_{m_{j_k}-1}(\al_{j_k}^{\otimes (m_{j_k}-1)} )_{t-}  \Big) 
 \al_{({\bf e}_i )}(t,x) \cM^{({\bf e}_i)}(\rmd t, \rmd x).   \notag
 \tionl
Indeed, if   $|{\bf e}_i|=1$  only the first sum in \eqref{eq:rep.pol}
contributes, and   we have   that $$\cM^{({\bf e}_i)} =M.$$   If  ${\bf e}_i=\{ i_1,i_2\},$ then    $|{\bf e}_i|=2,$ and  we have two summands in \eqref{eq:rep.pol} with integrators 
$X^{\al_{i_{1:2}}}$  and  $\langle  X^{\al_{j_1}} , X^{\al_{j_2}} \rangle,$   so that from \eqref{2-compcov} and  \eqref{2-predcov} we conclude that
 $$\cM^{({\bf e}_i)} (\rmd t,\rmd x) = \m(\rmd t,\rmd x)+ \tilde{\mathcal{N}}(\rmd t,\rmd x) .$$
For the remaining cases $|{\bf e}_i|\ge 3$ we have two summands in \eqref{eq:rep.pol}, and we get from  \eqref{eq_with_N-tilde} and  \eqref{eq_with_measure} 
that 
$$   \cM^{({\bf e}_i)} (\rmd t,\rmd x)   = \rmd t\nu(\rmd x)     +    \tilde{\mathcal{N}}(\rmd t,\rmd x).$$
This proves \eqref{Y-explained}.

Now we apply the relation \eqref{first-application}   to $ \Big (\prod_{\begin{subarray}{c}n=1\\n\notin {\bf e}_i \end{subarray}}^N  J_{m_n}(\al_{n}^{\otimes m_n} )_{t-}  \Big)  \Big ( \prod_{j_k\in {\bf e}_i } J_{m_{j_k}-1}(\al_{j_k}^{\otimes (m_{j_k}-1)} )_{t-}  \Big)$ and repeat this procedure until no products of  iterated  integrals are left as integrands.
The resulting integrals are of the form
$$\int_{\Delta_{\,T}(k)} 
\al_{({\bf s}_1)}(z_1) \cM^{({\bf s}_1)}(dz_1) \ldots       \al_{({\bf s}_{k})}(z_k)  \cM^{({\bf s}_k)}( dz_k), $$
where $\max\{m_1,...,m_N\} \le k \le m_1+...+m_N,$ and  only those  sequences  $({\bf s}_1,...,   {\bf s}_k) \in   \cS^k$  appear which
satisfy
$$ \sum_{n=1}^k  \one_{ {\bf s}_n} (j) =m_j$$
for all $j =1,...,N.$ This condition describes how the iterated  integrals  are  used in this procedure and their order is diminished to zero  to get the $\al_{({\bf s}_{n})}(z_n).$
This leads to $$A_k:= \left \{({\bf s}_1,...,   {\bf s}_k) \in \cS^k:  \, \sum_{n=1}^k  \one_{ {\bf s}_n} (j) =m_j,  \,\, \forall  j =1,...,N    \right \},$$
which consists of those tuples  $({\bf s}_1,...,   {\bf s}_k)$ such that each number $j$ (between $1$ and $N$) is contained in exactly  $m_j$  of the  ${\bf s}_n.$
In this way we get  relation \eqref{product-to-iterated}. 
 \end{proof} \smallskip
  We have achieved now a product formula for {\it iterated} integrals  where the  kernels are tensor products with strong integrability conditions. In the next section we  derive a product formula for {\it multiple} integrals w.r.t.~$M.$
	
\subsection{Product formula for multiple integrals}\label{susec:prod.for.mul.int} 
Our aim is to rewrite the product formula for iterated integrals obtained in Proposition \ref{product-iterated} in terms of multiple integrals w.r.t.~$M(\rmd t,\rmd x)$.  For this  we  need some preparation.

\paragraph{Preparation.}
 
We  rewrite the  integrals w.r.t. $ \cM^{({\bf s}_k)}$ as sums of integrals where the integrators are either $M$ or $\m.$ We replace 
$\al_{({\bf s}_n)}(t_n, x_n) \tilde{\mathcal{N}}(dt_n,dx_n)$ by  $\al_{({\bf s}_n)}(t_n, x_n) \one_{\R_0}(x_n) M(dt_n,dx_n).$ By \eqref{Y-explained} one has

 \equal \label{random-deterministic}
 && \int_{\Delta_{\,T}(k)} 
\al_{({\bf s}_1)}(z_1) \cM^{({\bf s}_1)}(dz_1) \ldots       \al_{({\bf s}_k)}(z_k)  \cM^{({\bf s}_k)}( dz_k)   \notag\\
&&=  \sum_{{\bf i} \in \{0,1\}^{k}} \int_{\Delta_{\,T}(k)}  \al_{{\bf s}_1,i_1}(z_1)\cM^{ i_1}(dz_1) \ldots       
\al_{{\bf s}_{k},i_k}(z_k)  \cM^{i_k}( dz_k),
\tionl 
where we use $i_n=1$ for the random part and $i_n=0$ for the deterministic part:
$$   \cM^{i_n}(\rmd t, \rmd x) := \left \{ \begin{array}{ll} M(\rmd t,\rmd x)   & \text{ if }  \, \,    i_n=1  \\ \\
\m(\rmd t,\rmd x)  & \text{ if } \,\,  i_n=0.\\
        \end{array}      \right . $$
 Recall that  ${\bf s}_1,...,{\bf s}_{k} \in \cS.$ For the  ${\bf e}_n \in \cS$  (see \eqref{the s-set} and \eqref{the en-set}) we put 
\equal  \label{many-choices}  
\al_{{\bf e}_n, i_n} (z_n)  := \left \{ \begin{array}{ll} \prod_{\ell \in {\bf e}_n}  \al_{\ell} (z_n)  & \text{ if }  \, \, |{\bf e}_n| =1,   i_n=1  \\ \\
\prod_{\ell \in {\bf e}_n}  \al_{\ell} (z_n) \one_{[0,T]\times\R_0}(z_n)  & \text{ if }  \, \, |{\bf e}_n| \ge 2,   i_n=1  \\ \\
0 & \text{ if }  \, \, |{\bf e}_n| =1,   i_n=0  \\ \\
\prod_{\ell \in {\bf e}_n}  \al_{\ell} (z_n)    & \text{ if } \,\,|{\bf e}_n| =2 , i_n=0 \\\\
\prod_{\ell \in {\bf e}_n}  \al_{\ell} (z_n)   \one_{[0,T]\times\R_0}(z_n)    & \text{ if } \,\,|{\bf e}_n| \ge 3 , i_n=0. \\
  \end{array}      \right . \tionl
So we have  integrands of the form

$$ \prod_{n=1}^{k} \al_{{\bf s}_n, i_n} (t_n,x_n) \one_{\{0\le t_1\le ...\le t_k \le T\}}.  $$

\begin{definition} \label{theD-N-set} We set (recall that $\sn=2^N -1$)
  \equal \label{Dsn}
&&D_N:= \{   (l,l^o)=((l_1,...,l_{\ssn}),(l^o_1,...,l^o_{\ssn})):  l_k, l^o_k \in \{0,1,...\} \,\, \text{for} \,\, k=1,...,\sn  \notag\\
&&\hspace{9em} \text {such that} \,\,  \sum_{n=1}^{\ssn} (l_n+ l_n^o) \one_{{\bf e}_n}(j) =m_j \,\,\text{for} \,\,\, j=1,...,N  \}.    
\tionl
Moreover, for $l=(l_1,...,l_{\ssn})$  and   $ l^o =  (l^o_1,...,l^o_{\ssn})$ we set
\begin{enumerate}[(i)]
\item  
$ |l| := \sum_{n=1}^{\ssn} l_n, \quad   |l^o| := \sum_{n=1}^{\ssn} l^o_n, \quad  l! := l_1!\cdots l_{\ssn}!,  \quad   \text{and }  \quad    l^o! := l^o_1!\cdots l^o_{\ssn}!,$
\item and for  $ \al_{{\bf e}_n,i_n}$ defined in \eqref{many-choices} we put 
$$ \al^{\otimes l_n}_{{\bf e}_n,1}:=\underbrace{  \al_{{\bf e}_n,1} \otimes...\otimes  \al_{{\bf e}_n,1} }_{l_n \, \text{times}}  \quad \text{and} \quad  \al^{\otimes l^o_n}_{{\bf e}_n,0}:=\underbrace{  \al_{{\bf e}_n,0} \otimes...\otimes  \al_{{\bf e}_n,0} }_{l_n^o \, \text{times}},$$
\equal \label{the-alpha-l}
  \al^{\otimes l} := \al_{{\bf e}_1,1}^{\otimes l_1} \otimes ... \otimes\al_{{\bf e}_{\ssn},1}^{\otimes l_{\ssn}}
  \quad \text{and }  \quad  \al^{ \otimes l^o} := \al_{{\bf e}_1,0}^{\otimes l^o_1} \otimes ... \otimes \al_{{\bf e}_{\ssn},0}^{\otimes l^o_{\ssn}}.
  \tionl
\end{enumerate}
\end{definition}
The set  $D_N$ has the same function as the set $A_k$ in \eqref{Akset}. We use the superscript $^o$ as reminder that $l^o$ is the vector related to the deterministic case, i.e.~to the integration w.r.t.~$\m.$ 
 \smallskip\\
We are now in the position to state a first product formula for multiple integrals which is a consequence of Proposition \ref{product-iterated}. The proof is postponed to the appendix.

\begin{proposition} \label{alpha-products}
Let  $\al_j \in \bigcap_{p\geq2} L^p_1$  for $j=1,..,N.$ Then 
\equal  \label{first-product-formula}
&&\!\!\prod_{j=1}^N I_{m_j}( \al_j^{\otimes m_j})_T  
= \!\!\!\sum_{k \le m_1+\dots+ m_N}  \sum_{\begin{subarray}{c}|l|=k,\\ (l,l^o) \in D_N \end{subarray}}   \frac{m_1!\cdots m_N!}{l!\, l^o !} 
\left (\int_{((0,T]\times \R)^{|l^o|} }   \al^{ \otimes l^o}d\m^{\otimes |l^o|} \right )\,\,  I_k (\al^{\otimes l})_T, \notag \\
\tionl
where the notation from Definition \ref{theD-N-set} is used.
\end{proposition}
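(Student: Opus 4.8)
The plan is to derive Proposition~\ref{alpha-products} from the product formula for iterated integrals (Proposition~\ref{product-iterated}), the splitting of the integrators prepared in \eqref{random-deterministic}--\eqref{many-choices}, and the $I$--$J$ conversion of Lemma~\ref{iterated-and-multiple}. Since each kernel $\al_j^{\otimes m_j}$ is symmetric, Lemma~\ref{iterated-and-multiple}\eqref{with-hat-or-not} gives $I_{m_j}(\al_j^{\otimes m_j})_T = m_j!\, J_{m_j}(\al_j^{\otimes m_j})_T$, so
\[
\prod_{j=1}^N I_{m_j}(\al_j^{\otimes m_j})_T = (m_1!\cdots m_N!)\prod_{j=1}^N J_{m_j}(\al_j^{\otimes m_j})_T,
\]
which produces the numerator $m_1!\cdots m_N!$ once and for all. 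I would then insert Proposition~\ref{product-iterated} for $\prod_{j=1}^N J_{m_j}(\al_j^{\otimes m_j})_T$ and apply \eqref{random-deterministic} to each resulting term, replacing every integrator $\cM^{({\bf s}_n)}$ by its random part ($i_n=1$, integrator $M$) and its deterministic part ($i_n=0$, integrator $\m$), carrying the kernels $\al_{{\bf s}_n,i_n}$ of \eqref{many-choices}. At this stage the left-hand side equals $m_1!\cdots m_N!$ times a sum, over $K$ with $\max_j m_j\le K\le m_1+\cdots+m_N$, over tuples $({\bf s}_1,\ldots,{\bf s}_K)\in A_K$ and over ${\bf i}\in\{0,1\}^K$, of nested simplex integrals $\int_{\Delta_{\,T}(K)}\prod_{n=1}^K \al_{{\bf s}_n,i_n}(z_n)\,\cM^{i_n}(\rmd z_n)$.

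Next I would regroup these terms by their \emph{type}: to each pair $(({\bf s}_1,\ldots,{\bf s}_K),{\bf i})$ I associate the counts $l_a:=\#\{n:{\bf s}_n={\bf e}_a,\ i_n=1\}$ and $l^o_a:=\#\{n:{\bf s}_n={\bf e}_a,\ i_n=0\}$ for $a=1,\ldots,\sn$. The condition $({\bf s}_1,\ldots,{\bf s}_K)\in A_K$ translates verbatim into $(l,l^o)\in D_N$, while $K=|l|+|l^o|$, so the summation is reindexed by $(l,l^o)\in D_N$ with $k:=|l|$ the order of the eventual multiple integral. The crux is then the \emph{factoring identity}: for each fixed type $(l,l^o)$,
\[
\sum_{\mathrm{type}=(l,l^o)}\ \int_{\Delta_{\,T}(K)}\prod_{n=1}^K \al_{{\bf s}_n,i_n}(z_n)\,\cM^{i_n}(\rmd z_n) = \frac{1}{l!\,l^o!}\left(\int_{((0,T]\times\R)^{|l^o|}}\al^{\otimes l^o}\,\rmd\m^{\otimes|l^o|}\right)I_{|l|}(\al^{\otimes l})_T.
\]
Summing this over $(l,l^o)\in D_N$, multiplying by $m_1!\cdots m_N!$ and reorganising the outer index by $k=|l|$ yields exactly \eqref{first-product-formula}.

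To prove the factoring identity I would first pass from indistinguishable to distinguishable items: a type-$(l,l^o)$ term corresponds to a multiset of $K$ labels distributed over the $K$ ordered time slots, and treating the $l_a$ equal labels of each random kind and the $l^o_a$ equal labels of each deterministic kind as distinct multiplies the number of arrangements by $l!\,l^o!$; this over-counting is the source of the denominator $1/(l!\,l^o!)$. For distinguishable items, summing the nested simplex integral over all $K!$ orderings decouples the deterministic from the random integrations. Indeed, because $\m$ is deterministic and every integrand factorises across the variables, summing over the positions of a deterministic variable relative to the others lets it range freely over $(0,T]\times\R$, as the elementary identity
\[
\int_{s<t} f(s)g(t)\,\m(\rmd s)M(\rmd t) + \int_{s<t} g(s)f(t)\,M(\rmd s)\m(\rmd t) = \Big(\int f\,\rmd\m\Big)\Big(\int g\,\rmd M\Big)
\]
already shows; iterating, the deterministic factors combine into the product integral $\int \al^{\otimes l^o}\,\rmd\m^{\otimes|l^o|}$, and the residual sum over the orderings of the random variables reconstitutes the symmetric multiple integral via $I_{|l|}(g)_T=\sum_{\pi\in{\tt S}_{|l|}}J_{|l|}(g^\pi)_T$ (with $g^\pi$ the kernel obtained by permuting the $|l|$ argument-pairs of $g$ by $\pi$), which is immediate from $I_{|l|}(g)_T=|l|!\,J_{|l|}(\widehat g)_T$ and the linearity of $J_{|l|}$.

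The step I expect to be the main obstacle is precisely this decoupling: one must rigorously justify interchanging the deterministic integrations with the time-ordering and with the stochastic integrations, verify that the diagonal sets $\{t_m=t_n\}$ are $\m$-null so that opening the ordered simplex to the full product space introduces no spurious contribution, and track the combinatorics carefully enough that the clean factor $1/(l!\,l^o!)$ emerges. The hypothesis $\al_j\in\bigcap_{p\ge2}L^p_1$ is what guarantees that all the intermediate deterministic integrals and multiple integrals are finite and that these manipulations are licit.
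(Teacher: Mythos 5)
Your proposal is correct and follows essentially the same route as the paper's appendix proof: conversion between $I_{m_j}$ and $J_{m_j}$ producing the factor $m_1!\cdots m_N!$, insertion of Proposition \ref{product-iterated} with the splitting \eqref{random-deterministic}, regrouping by the type $(l,l^o)$ with the multinomial-style multiplicity factor $1/(l!\,l^o!)$, and then your ``factoring identity,'' which is precisely the paper's Lemma \ref{lemma-multiple}. The decoupling step you flag as the main obstacle is exactly what the paper makes rigorous by introducing mixed multiple integrals (Definition \ref{mixed}, Lemma \ref{mixed-properties}) and observing that $\sum_{\sigma}\one_{\Delta_{\,T}(k)}^\sigma=\one_{((0,T]\times\R)^k}$ holds $\m^{\otimes k}$-a.e., so that the permutation sum of simplex integrals becomes the mixed integral of the full product indicator, which factorizes into the deterministic integral times $I_{|l|}(\al^{\otimes l})_T$.
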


\smallskip

We proceed with an example  to illustrate the above notation.

\begin{example} \label{three-functions} Let us assume that the L\'evy process does only have the jump part, i.e. $M(\rmd t,\rmd x) = \tilde \cN(\rmd t,\rmd x),$ so that we do not need to multiply with  $\one_{[0,T]\times\R_0}(z_n)$ in
\eqref{many-choices} which is used to separate the  contribution coming from the Gaussian part. Assume that $\al_1,\al_2,\al_3 \in \bigcap_{p\geq2} L^p_1$  and put
$$ f^{(1)}(z_1,...,z_{m_1}) = \al_1^{\otimes m_1} ( z_1,...,z_{m_1})$$
$$ f^{(2)}(z_1,...,z_{m_2}) = \al_2^{\otimes m_2} ( z_1,...,z_{m_2})$$
$$ f^{(3)}(z_1,...,z_{m_3}) = \al_3^{\otimes m_3} ( z_1,...,z_{m_3}).$$
Then  the set $\cS$  given in \eqref{the s-set}  contains $2^{3}-1$ elements. For example (the order is not relevant) we put 
$$ {\bf e}_1 = \{1\}, \,\,\,   {\bf e}_2 =\{2\}, \,\,\,  {\bf e}_3 =\{3\}, \,\,\,  {\bf e}_4  =\{1,2\}, \,\,\,{\bf e}_5 =\{1,3\}, \,\,\, {\bf e}_6 =\{2,3\}, \,\,\, {\bf e}_7  =\{1,2,3\}.$$
For  $0 \le k \le m_1+m_2+m_3$ we require that $|l| =l_1 +...+l_7 =k$ and from \eqref{the-alpha-l} we get
$$ \al^{\otimes l} =  \al_1^{\otimes l_1}\otimes  \al_2^{\otimes l_2}\otimes \al_3^{\otimes l_3}\otimes  (\al_1\al_2)^{\otimes l_4} \otimes(\al_1\al_3)^{\otimes l_5} \otimes(\al_2 \al_3)^{\otimes l_6} \otimes(\al_1\al_2\al_3)^{\otimes l_7}  $$
$$ \al^{\otimes l^o} =  \al_1^{\otimes l^o_1}\otimes  \al_2^{\otimes l^o_2}\otimes \al_3^{\otimes l^o_3}\otimes  (\al_1\al_2)^{\otimes l^o_4} \otimes(\al_1\al_3)^{\otimes l^o_5} \otimes(\al_2 \al_3)^{\otimes l^o_6} \otimes(\al_1\al_2\al_3)^{\otimes l^o_7}  $$
Only  pairs $(l,l^o)$ are used where  the number of functions {\new $\al_1 $}    in $ \al^{\otimes l} $ and $ \al^{\otimes l^o}$ altogether equals  $m_1,$
and likewise the number of functions $\al_2 $ and $\al_3$  in $ \al^{\otimes l} $ and $ \al^{\otimes l^o}$ equals to $m_2$ and $m_3,$ respectively. This we ensure by
using the set $D_N.$
\end{example} 

In  Proposition \ref{alpha-products} we stated a product formula for multiple integrals for the special case where the integrands are given by
$f^{(j)} = \al_j^{\otimes m_j}.$
In order to work with arbitrary $f^{(j)}  \in \bigcap_{p\geq2} L^p_{m_j}$  we need to replace our notation  $\al^{ \otimes l^o}$ and  $\al^{\otimes l}$  such that it works for the general case.
We adapt the notation used in \cite{DoPecc18}: \bigskip

\begin{definition}[contractions and identifications] \label{contractions_and_identifications}
Let  $f^{(j)}  \in \bigcap_{p\geq2} L^p_{m_j}$  be symmetric  functions for  $j=1,...,N.$

\begin{enumerate}[(a)]
\item     For $(l,l^o) \in D_N$ and  ${\bf e}_n \in \cS$  given in \eqref{the s-set}  we define  for  $n=1,..., \sn$ {\it identifications} of $l_n$  variables and   {\it  contractions}  of $l^o_n$ variables:    
\begin{align*}
 &\bigstar^l_{l^o}  (f^{(1)},...,f^{(N)} )\left((z_{1:l_n}^{{\bf e}_n})_{1\leq n\leq s_N}\right) \\
 &\quad  :=
 \int_{({(0,T]}\times\R)^{|l^o|}} \prod_{j=1}^Nf^{(j)}\left((z_{1:l_n}^{{\bf e}_n})_{1\leq n\leq s_N, j\in {\bf e}_n},(z_{1:l_n^o}^{{\bf e}_n,0})_{1\leq n\leq s_N, j\in {\bf e}_n}\right)\\
&\quad\quad\quad\quad\quad\quad\quad\quad  \times \prod_{1\leq n\leq s_N}c_{{\bf e}_n}\left(z_{1:l_n}^{{\bf e}_n},z_{1:l_n^o}^{{\bf e}_n,0}\right)
 d\m^{\otimes |l^o|}\big((z_{1:l_n^o}^{{\bf e}_n,0})_{1\leq n\leq s_N}\big),
\end{align*}
constituting a function in the $|l|$ variables $(z_{1:l_n}^{{\bf e}_n})_{1\leq n\leq s_N},$
 and
   \equal\label{cen}  c_{{\bf e}_n}(z_{1:l_n}^{{\bf e}_n},z_{1:l_n^o}^{{\bf e}_n,0}) := \left \{ \begin{array}{ll}1  & \text{ if }  \, \, |{\bf e}_n| =1,   l_n^o=0  \\ \\
0  & \text{ if }  \, \, |{\bf e}_n| = 1,   l_n^o\ge 1  \\ \\
\prod_{i=1}^{l_n}  \one_{(0,T]\times\R_0}(z^{{\bf e}_n}_i)      & \text{ if } \,\,|{\bf e}_n| =2  \\\\
\prod_{i=1}^{l_n^o}  \one_{(0,T]\times\R_0}(z^{{\bf e}_n,0}_i)    & \text{ if } \,\,|{\bf e}_n| \ge 3, \\
  \end{array}      \right .\tionl  
where $\prod_{i=1}^0 \cdots  :=1.$ The variables from the kernels  $f^{(1)},...,f^{(N)}$   are grouped into tuples $z_{1:l_n}^{{\bf e}_n}= (z^{{\bf e}_n}_1,..., z^{{\bf e}_n}_{l_n}) $  and $z_{1:l_n^o}^{{\bf e}_n,0}=(z^{{\bf e}_n,0}_1,..., z^{{\bf e}_n,0}_{l^o_n})$ with the convention that  there is no  tuple  $z_{1:l_n}^{{\bf e}_n}$ if $l_n=0,$ and likewise for $l_n^o=0$.
 The notation $(z_{1:l_n}^{{\bf e}_n})_{1\leq n\leq s_N, j\in {\bf e}_n}$ for the variables in  $f^{(j)}$
 means that we  only use the tuples of variables  $z_{1:l_n}^{{\bf e}_n}$ for which it holds that $j\in {\bf e}_n,$ and the same holds for $(z_{1:l_n^o}^{{\bf e}_n,0})_{1\leq n\leq s_N, j\in {\bf e}_n}.$

\item The symmetrization of $\bigstar^l_{l^o}  (f^{(1)},...,f^{(N)})$ is denoted by $\widehat \bigstar^l_{l^o}  (f^{(1)},...,f^{(N)}).$
\end{enumerate}
\end{definition}

\begin{example}
\begin{enumerate}[(a)] \item
Note  that  the expression $\bigstar^l_{l^o}  (f^{(1)},...,f^{(N)})$ is in general not symmetric: Take for example $f^{(1)}(z_1,z_2) = \al_1(z_1)\al_1(z_2)$ and 
$f^{(2)}(z_1)=\al_2(z_1),$ and  ${\bf e}_1 =\{1\}, {\bf e}_2 =\{2\}, {\bf e}_3 =\{1,2\}.$ Choose $l=(1,0,1)$ and $l^o=(0,0,0).$   To see that  $(l,l^o) \in  D_N$ we check 
$l_1+l_3 =2= m_1$ and $l_3=1 =m_2.$ Then  
$$\bigstar^l_{l^o}  (f^{(1)},f^{(2)} ) (z^{{\bf e}_1}_1, z^{{\bf e}_3}_1 ) = \al_1(z^{{\bf e}_1}_1)\al_1(z^{{\bf e}_3}_1) \al_2(z^{{\bf e}_3}_1) \one_{[0,T]\times\R_0}(z^{{\bf e}_3}_1) .$$
Symmetry would require the relation $\al_2(z^{{\bf e}_3}_1)  \one_{(0,T]\times\R_0}(z^{{\bf e}_3}_1)  =\al_2(z^{{\bf e}_1}_1)\one_{(0,T]\times\R_0}(z^{{\bf e}_1}_1)$ for all $z^{{\bf e}_1}_1, z^{{\bf e}_3}_1 \in (0,T]\times\R.$

\item
\bigskip
Now we  take $N=3$, $m_1=13, m_2=10, m_3=8$. Then $ |\cS|= \sn = 2^N-1=7$  and 
\begin{align*}
\{{\bf e}_1,\dotsc,{\bf e}_{7}\}=\left\{ \{1\},\{2\},\{3\}, \{1,2\}, \{1, 3\}, \{2, 3\}, \{1,2,3\}\right\}.
\end{align*}
Assume that $(l,l^o)=((5,2,1,1,1,1,1),(0, 0,0,2,1,1,2))$.  We have that   $(l,l^o)\in  D_N.$  By the above definition,
\begin{align*}
& \hspace*{-1.7em} \bigstar_{l^o}^l(f^{(1)},f^{(2)},f^{(3)})\big(z^{\{1\}}_1,\dotsc,z^{\{1\}}_5,z^{\{2\}}_1, z^{\{2\}}_2, z^{\{3\}}_1, z^{\{1,2\}}_1,z^{\{1,3\}}_1, z^{\{2,3\}}_1, z^{\{1,2,3\}}_1\big)\\
&\hspace*{-1.7em}  =\int_{({{(0,T]}\times\R_0})^{6}} \hspace*{-1.1em}  f^{(1)}\big(z^{\{1\}}_1,\dotsc,z^{\{1\}}_5, z^{\{1,2\}}_1, z^{\{1,3\}}_1, z^{\{1,2,3\}}_1, z^{\{1,2\},0}_1, z^{\{1,2\},0}_2, z^{\{1,3\},0}_1, z^{\{1,2,3\},0}_1, z^{\{1,2,3\},0}_2\big)\\
& \quad\quad\quad \times f^{(2)}\big(z^{\{2\}}_1, z^{\{2\}}_2, z^{\{1,2\}}_1, z^{\{2,3\}}_1, z^{\{1,2,3\}}_1, z^{\{1,2\},0}_1, z^{\{1,2\},0}_2, z^{\{2,3\},0}_1, z^{\{1,2,3\},0}_1, z^{\{1,2,3\},0}_2\big) \\ \\
&\quad\quad\quad  \times f^{(3)}\big(z^{\{3\}}_1, z^{\{1,3\}}_1, z^{\{2,3\}}_1, z^{\{1,2,3\}}_1, z^{\{1,3\},0}_1, z^{\{2,3\},0}_1, z^{\{1,2,3\},0}_1, z^{\{1,2,3\},0}_2\big)
\\ \\
&\quad\quad\quad \times \one_{[0,T]\times\R_0}\big(z^{\{1,2\}}_1\big)\one_{[0,T]\times\R_0}\big(z^{\{1,3\}}_1\big)\one_{[0,T]\times\R_0}\big(z^{\{2,3\}}_1\big)\one_{[0,T]\times\R_0}\big(z^{\{1,2,3\},0}_1\big) \\ \\
&\quad\quad\quad \times \one_{[0,T]\times\R_0}\big(z^{\{1,2,3\},0}_2\big)
 d\m^{\otimes 6}\big(z^{\{1,2\},0}_1, z^{\{1,2\},0}_2,z^{\{1,3\},0}_1,z^{\{2,3\},0}_1, z^{\{1,2,3\},0}_1, z^{\{1,2,3\},0}_2\big).
\end{align*}

Indeed,  by integrating over $|l^o|=6$ variables, we get a function in $|l|=12$ variables.  Note that the components of $l^o$ are $0$ for all singletons $\{j\}, j=1,2,3$, because otherwise the corresponding term $c_{\{j\}}$ would yield zero. 
\end{enumerate}
\end{example}
 
\begin{rem} \label{en}  It is of interest to have the product formula for multiple integrals for the special cases $M(\rmd t, \rmd x) = \sigma dW_t \delta_0(\rmd x)$  and  $M(\rmd t, \rmd x) =  \tilde \cN(\rmd t,\rmd x).$ They can be derived easily from the general expression by adjusting $c_{{\bf e}_n}$:
\begin{enumerate}[(a)] 
\item \label{jump}  For the jump case $M(\rmd t, \rmd x) =  \tilde \cN(\rmd t,\rmd x)$  we use  
\equa  c_{{\bf e}_n}(z_{1:l_n}^{{\bf e}_n},z_{1:l_n^o}^{{\bf e}_n,0}) := \left \{ \begin{array}{ll}0  & \text{ if }  \, \, |{\bf e}_n| = 1,   l^o_n \ge 1  \\ \\
1  & \text{ otherwise}.    \\ 
  \end{array}      \right .\tion
\item\label{cbrown} For the Brownian motion case  $M(\rmd t, \rmd x) = \sigma dW_t \delta_0(\rmd x)$ we use 
\equal\label{e:cBrown}  c_{{\bf e}_n}(z_{1:l_n}^{{\bf e}_n},z_{1:l_n^o}^{{\bf e}_n,0}) := \left \{ \begin{array}{ll}1  & \text{ if }  \, \, |{\bf e}_n| =1,   l_n^o=0  \\ \\
1    & \text{ if } \,\,|{\bf e}_n| =2,   l_n^o\ge 1, l_n=0   \\\\
0   & \text{otherwise}. \\
  \end{array}      \right .\tionl
\end{enumerate}
\end{rem}

In  \cite[Theorem 2.2]{Ag20}  a product formula is stated for multiple integrals w.r.t.~the compensated Poisson random measure. The expression coincides  with 
\eqref{product-formula} below if one uses Remark \ref{en}\eqref{jump}.  The main result of this paper is Theorem \ref{L2condition-thm} where we establish sufficient 
 integrability conditions on the kernels for the product formula. Its proof uses similar ideas as in \cite{DTG19} for products of iterated integrals.

\begin{theorem}  \label{L2condition-thm}
Assume symmetric  $f^{(j)} \in L^2_{m_j}$  for $j=1,...,N.$ If for all $(l,l^o) \in D_N$ with $|l|=k$  ($k \le m_1+...+m_N$) we have 
\equal \label{L2-condition} 
&& \int_{((0,T]\times \R)^k} \left (   \widehat \bigstar^l_{l^o}  (|f^{(1)}|,...,|f^{(N)} |)       \right )^2 d\m^{\otimes k} < \infty , 
\tionl
then
$$ \prod_{j=1}^N I_{m_j}(f^{(j)})_T \quad \text{  is square integrable}$$
 and 
 \equal \label{product-formula}
\prod_{j=1}^N I_{m_j}(f^{(j)})_T   &=&  \sum_{k \le m_1+\dots+ m_N}  I_k \left ( \sum_{\begin{subarray}{c}|l|=k,\\ (l,l^o) \in D_N \end{subarray}} \!  \frac{m_1!\cdots m_N!}{l!\, l^o !} \,\,
 \widehat \bigstar^l_{l^o}  (f^{(1)},...,f^{(N)} ) \right )_T .
\tionl
In particular,
 \equal \label{expectation-product-formula}
\EE \left ( \prod_{j=1}^N I_{m_j}(f^{(j)})_T \right )  &=&  \sum_{\begin{subarray}{c} ((0,...,0),l^o) \in D_N \end{subarray}} \!  \frac{m_1!\cdots m_N!}{l^o !} \,\,
  \bigstar^{(0,...,0)}_{l^o}  (f^{(1)},...,f^{(N)} ) .
\tionl

\end{theorem}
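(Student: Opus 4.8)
The plan is to start from the tensor-power product formula already available in Proposition~\ref{alpha-products} and reach the general statement in two successive approximation stages, with the integrability hypothesis \eqref{L2-condition} entering only in the last one. First I would check that \eqref{product-formula} is nothing but \eqref{first-product-formula} on tensor-power kernels: evaluating $\bigstar^l_{l^o}$ on $f^{(j)}=\al_j^{\otimes m_j}$ with $\al_j\in\bigcap_{p\ge2}L^p_1$, each $f^{(j)}$ factorises over its identification and contraction variables, grouping the factors attached to ${\bf e}_n$ produces $\al_{({\bf e}_n)}$, and the indicators carried by $c_{{\bf e}_n}$ in \eqref{cen} reproduce the $\one_{[0,T]\times\R_0}$-weights of \eqref{many-choices}, so that $\bigstar^l_{l^o}(\al_1^{\otimes m_1},\dots,\al_N^{\otimes m_N})=\big(\int\al^{\otimes l^o}\,d\m^{\otimes|l^o|}\big)\,\al^{\otimes l}$ and the two formulas coincide. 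Both sides of \eqref{product-formula} are multilinear in $(f^{(1)},\dots,f^{(N)})$ — the left because each $I_{m_j}$ is linear, the right because $\bigstar^l_{l^o}$ is a product-and-integral of its arguments — and $I_{m_j}(f)_T=I_{m_j}(\widehat f)_T$; hence, by polarisation of symmetric powers, the identity passes from pure powers to arbitrary symmetrised tensor products and then, by linearity, to all elementary kernels (finite linear combinations of indicators of product sets of finite $\m$-measure), for which \eqref{L2-condition} holds trivially.

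Next I would extend \eqref{product-formula} to bounded, compactly supported symmetric kernels. Such kernels are approximated in $L^2_{m_j}$ by elementary ones, and on a fixed bounded support of finite $\m$-measure the multilinear map $\bigstar^l_{l^o}$ is $L^2$-continuous by Cauchy--Schwarz, so $\widehat\bigstar^l_{l^o}$ of the approximants converges in $L^2_k$; the isometry of Lemma~\ref{iterated-and-multiple}\eqref{L2-estimate} then transfers this to $L^2(\Pbb)$-convergence of the (finite) right-hand side. Since simultaneously $I_{m_j}(\cdot)_T$ is $L^2(\Pbb)$-continuous on $L^2_{m_j}$, the products on the left converge in probability, and matching the limits yields \eqref{product-formula} for bounded, compactly supported kernels, where once more every integrability requirement is satisfied automatically.

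Finally, for general symmetric $f^{(j)}\in L^2_{m_j}$ obeying \eqref{L2-condition}, I would truncate: $f^{(j)}_R:=f^{(j)}\,\one_{\{|f^{(j)}|\le R\}}\one_{K_R}$ with $K_R\uparrow$ bounded, so that $|f^{(j)}_R|\le|f^{(j)}|$ and $f^{(j)}_R\to f^{(j)}$ pointwise and in $L^2_{m_j}$. Monotonicity of the star operator on nonnegative arguments gives $|\bigstar^l_{l^o}(f^{(1)}_R,\dots,f^{(N)}_R)|\le\bigstar^l_{l^o}(|f^{(1)}|,\dots,|f^{(N)}|)$, and \eqref{L2-condition} forces the right-hand side into $L^2_k$, hence makes it $\m^{\otimes k}$-a.e.\ finite; dominated convergence in the contraction integrals then yields pointwise convergence of the star operators, and a second dominated convergence, now in $L^2_k$ with square-dominant $\bigstar^l_{l^o}(|f^{(1)}|,\dots)$, gives $\widehat\bigstar^l_{l^o}(f^{(1)}_R,\dots)\to\widehat\bigstar^l_{l^o}(f^{(1)},\dots)$ in $L^2_k$. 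By the isometry the right side converges in $L^2(\Pbb)$, while the left converges in probability to $\prod_{j=1}^N I_{m_j}(f^{(j)})_T$; equating the limits shows this product is the $L^2(\Pbb)$-limit of the right side, so it is square integrable and \eqref{product-formula} holds. Taking expectations and using $\EE\,I_k(\cdot)_T=0$ for $k\ge1$ together with $I_0=\Id$ leaves only the $k=0$ term ($l=(0,\dots,0)$, $l!=1$), which is \eqref{expectation-product-formula}.

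I expect the main obstacle to be this last stage: one must guarantee that the contractions defining $\bigstar^l_{l^o}$ pass to the limit, which rests on the $\m^{\otimes k}$-a.e.\ finiteness of $\bigstar^l_{l^o}(|f^{(1)}|,\dots,|f^{(N)}|)$ — precisely the content of the absolute-value form of \eqref{L2-condition} — and then one must reconcile the in-probability convergence of the product with the $L^2(\Pbb)$-convergence of the multiple-integral side in order to deduce square integrability rather than merely an identity in probability.
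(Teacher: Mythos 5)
Your overall strategy coincides with the paper's: start from Proposition \ref{alpha-products}, pass from tensor powers to general kernels by polarisation and multilinearity, run a two-stage approximation in which \eqref{L2-condition} enters only at the last stage as a dominating function, and finally reconcile the in-probability limit of the products with the $L^2(\Pbb)$ limit of the multiple-integral side. Your last stage (truncation $f^{(j)}_R$, monotonicity of $\bigstar^l_{l^o}$ on nonnegative arguments, two applications of dominated convergence using that $\widehat\bigstar^l_{l^o}(|f^{(1)}|,\dots,|f^{(N)}|)\in L^2_k$ is a.e.\ finite) is a clean variant of the paper's Step 2, which instead approximates $f^{(j),\pm}$ monotonically by simple functions and shows the products form a Cauchy sequence in $L^2(\Pbb)$ via the estimate \eqref{estimate-of-product}; both arguments rest on exactly the same domination.

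There is, however, one step in your intermediate stage that fails as stated: the claim that on a fixed support of finite $\m$-measure the map $\bigstar^l_{l^o}$ is $L^2$-continuous by Cauchy--Schwarz. Cauchy--Schwarz does control the contraction ($l^o$) variables, but not the identification ($l$) variables, where several kernels are evaluated at the same point without integrating it out. Concretely, take $N=2$, $m_1=m_2=1$, and the pair $(l,l^o)$ with $l_{\{1,2\}}=1$ and all other components zero, so that $\bigstar^l_{l^o}(f,g)(z)=f(z)g(z)\one_{(0,T]\times\R_0}(z)$ (here $\widehat\bigstar^l_{l^o}=\bigstar^l_{l^o}$). Choosing $f_n=g_n=n\one_{B_n}$ with $B_n$ inside a fixed finite-$\m$-measure subset of the jump region and $\m(B_n)=n^{-3}$ gives $\|f_n\|_{L^2_1}^2=n^{-1}\to0$ while $\|\bigstar^l_{l^o}(f_n,f_n)\|_{L^2_1}^2=n\to\infty$; so $L^2_{m_j}$-convergence of elementary approximants alone does not yield $L^2_k$-convergence of the star operators, even on a fixed finite-measure support. (This is precisely the phenomenon that makes integrability conditions necessary in the jump case and absent in the Gaussian case, where the indicator $\one_{\R_0}$ kills the identifications.) The repair is the one the paper makes explicit in its Step 1: choose the approximating elementary kernels \emph{uniformly bounded} and converging in measure --- as in \eqref{step-indicator}, where $|f^{(j)}_n|\le1$ --- and replace ``continuity by Cauchy--Schwarz'' by dominated convergence, the dominating function being a star operator applied to bounded kernels on a finite-measure set. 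A second, smaller point: ``bounded support'' should read ``support of finite $\m$-measure''; since $\nu$ may have infinite mass near $x=0$, a bounded subset of $(0,T]\times\R$ can have infinite $\m$-measure, so your exhausting sets $K_R$ must avoid a neighbourhood of the origin in the jump coordinate (while retaining $\{x=0\}$ for the Gaussian part).
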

\begin{proof} 
We will use Proposition \ref{alpha-products}. 
By the linearity  of the multiple integral  we may move  in \eqref{first-product-formula} the sum over $|l|=k, (l,l^o) \in D_N$ inside the multiple integral $I_k$ so that we have a sum of {\it orthogonal} multiple integrals. 
To derive  a representation for integrands  $f^{(j)}=\otimes_{i=1}^{m_j} \al_i$ (i.e. with different functions  $\al_i \in \bigcap_{p\geq2} L^p_1$ as factors)  from \eqref{first-product-formula} we will  use  the polarization identity like in \cite{LeeShih04} (see the comment on page 85 above Theorem 3.5). 
For this we need  that \eqref{first-product-formula} is  a multilinear symmetric form. The multilinearity is clear.  To realize that it is also symmetric,  i.e.~invariant under all permutations of the variables, note that we can replace the integrand $\alpha^{\otimes l^o}$ by its symmetrization $\alpha^{\hat{\otimes} l^o}$ since
$$\int_{((0,T]\times \R)^{|l^o|} }   \al^{ \otimes l^o}d\m^{\otimes |l^o|}  =  \int_{((0,T]\times \R)^{|l^o|} }   \al^{ \hat \otimes l^o}d\m^{\otimes |l^o|}. $$
Moreover, by the properties of the multiple integrals we have
$$  I_k (\al^{\otimes l})_T = I_k (\al^{\hat \otimes l})_T .$$
For the l.h.s.~of \eqref{first-product-formula} we use Lemma  \ref{iterated-and-multiple}  \eqref{with-hat-or-not}   and arrive at \eqref{product-formula} for  $ \prod_{j=1}^N I_{m_j}(f^{(j)}  )$. Indeed, it holds for $f^{(j)} = \al_j^{\otimes  m_j}$ for $j=1,...,N$ that 
$$    \int_{((0,T]\times \R)^{|l^o|}}  \al^{\hat \otimes l} \hat \otimes  \al^{\hat \otimes l^o} d\m^{\otimes  |l^o|}=  \widehat \bigstar^l_{l^o}  (f^{(1)},...,f^{(N)}).$$ 
This follows immediately from Definition  \ref{contractions_and_identifications}  and \eqref{many-choices} if we write $  \al^{ \otimes l} \otimes \al^{\otimes l^o}$ as a product (with the convention that $\prod_{r=1}^0 \dots :=1$) : 
\equa
&& \al^{ \otimes l} \otimes  \al^{\otimes l^o} \left((z_{1:l_n}^{{\bf e}_n})_{1\leq n\leq \ssn},(z_{1:l_n^o}^{{\bf e}_n,0})_{1\leq n\leq \ssn} \right)  \\
& =& \left ( \prod_{r=1}^{l_1} \al_{{\bf e}_1, 1} (z^{{\bf e}_1}_r) \right ) \cdots  \left ( \prod_{r=1}^{l_{\ssn}} \al_{{\bf e}_{\ssn}, 1} (z^{{\bf e}_{\ssn}}_r) \right )     \left ( \prod_{r=1}^{l^o_1} \al_{{\bf e}_1, 0} (z^{{\bf e}_1,0}_r) \right ) \cdots  \left ( \prod_{r=1}^{l^o_{\ssn}} \al_{{\bf e}_{\ssn}, 0} (z^{{\bf e}_{\ssn},0}_r) \right )
\\
&=&
\prod_{j=1}^Nf^{(j)}\left((z_{1:l_n}^{{\bf e}_n})_{1\leq n\leq s_N, j\in {\bf e}_n},(z_{1:l_n^o}^{{\bf e}_n,0})_{1\leq n\leq s_N, j\in {\bf e}_n}\right) \prod_{1\leq n\leq s_N}c_{{\bf e}_n}\left(z_{1:l_n}^{{\bf e}_n},z_{1:l_n^o}^{{\bf e}_n,0}\right).
\tion
\medskip
Now we continue with an approximation:  \smallskip \\
\underline{Step 1.}  The first observation is that  the assertion of Proposition \ref{alpha-products}  extends  from  $f^{(j)}=\otimes_{i=1}^{m_j} \al_i$ to indicator functions $f^{(j)} =\one_{A_j}\in L^2_{m_i}:$ 
 Similar to  \cite[Theorem 2.40]{Foll}  or \cite[Theorem A.8]{GY} 
 one shows that given  an $\varepsilon>0$ and a Borel set $A_j \in  \cB(((0,T]\times \R)^{m_j}) $,  with $ \m^{\otimes m_j}(A_j)<\infty$ there exists an $M \ge 1$ and  disjoint  rectangles
$R_{j,1},\ldots R_{j,M} \subseteq ((0,T]\times \R)^{m_j}$ whose sides are intervals such that
$$ \int_{((0,T]\times \R)^{m_j}} \Big| \one_{A_j} -\sum_{i=1}^M \one_{R_{j,i}} \Big | d\m^{\otimes m_j} = \m^{\otimes m_j}\Big(A_j \Delta \bigcup_{i=1}^M R_{j,i}\Big)   < \varepsilon.$$
The result in \cite[Theorem 2.40]{Foll}   is proven for the Lebesgue measure. But following the proof it is easy to see that it holds for $\m^{\otimes m_j}$ as well. To prove this approximation one can 
also use \cite[Theorem A.8]{GY} together with the fact, that one can write a simple function where the sets are  'half open rectangles'  always as a simple function using disjoint half open rectangles.  \\
We note that $\sum_{i=1}^M \one_{R_{j,i}} \in H_{m_j},$ where $H_{m_j}$  denotes  the linear hull   of the  set 
$$\left \{ \otimes_{i=1}^{m_j} \al_i:   \al_i \in  \cap_{p\geq2} L^p_1  \right  \}. $$
Moreover,  $0\le \sum_{i=1}^M \one_{R_{j,i}} \le 1$.  So we have shown that there exists a sequence $(f_n^{(j)})_{n=1}^\infty \subseteq H_{m_j}$ with 
\equal \label{step-indicator}
|f^{(j)}_n|\le 1  \quad   \text{and} \quad   f^{(j)}_n \rightarrow  \one_{A_j} \quad  \text{ in }  \quad L^2_{m_j},\ 
\text{ as }\ n\to \infty,\quad \ \text{ for all }\ j=1 \ldots,N.
 \tionl
For each $n$ we have that  $\prod_{j=1}^N I_{m_j}(f^{(j)}_n)_T \in L^2(\Pbb)$ and satisfies \eqref{product-formula}. 
To show that  the product $\prod_{j=1}^N I_{m_j}(f^{(j)}_n)_T$ is a Cauchy sequence in $L^2(\Pbb)$  for  $n\to \infty$ we estimate (abbreviating $\overline m_N:= m_1+\dots+ m_N$)
\begin{align} \label{estimate-of-product}
& \left \|\prod_{j=1}^N I_{m_j}(f^{(j)}_n)_T-\prod_{j=1}^N I_{m_j}(f^{(j)}_m)_T \right  \|_{L^2(\Pbb)} \notag \\
&\le   \sum_{r=1}^N \left \| \prod_{j=1}^{r-1} I_{m_j}(f^{(j)}_n)_T I_{m_r}(f^{(r)}_n-  f^{(r)})_T \prod_{j=r+1}^N I_{m_j}(f^{(j)}_m)_T   \right \|_{L^2(\Pbb)}  \notag  \\
&\le    \sum_{r=1}^N
\sum_{k \le \overline m_N}  \sum_{\begin{subarray}{c}|l|=k,\\ (l,l^o) \in D_N \end{subarray}}   \frac{m_1!\cdots m_N!}{l!\, l^o !} 
 \left \|  I_k (\widehat \bigstar^l_{l^o}  (f^{(1)}_n,...,f^{(r-1)}_n, f^{(r)}_n- f^{(r)}_m, f^{(r+1)}_m,   ...,f^{(N)}_m )_T \right \|_{L^2(\Pbb)}    \notag \\
&=  \sum_{r=1}^N    \sum_{k \le \overline m_N}  \sum_{\begin{subarray}{c}|l|=k,\\ (l,l^o) \in D_N \end{subarray}}   \frac{m_1!\cdots m_N!}{l!\, l^o !} k!
   \Big \|  \widehat \bigstar^l_{l^o}  (f^{(1)}_n,...,f^{(r-1)}_n, f^{(r)}_n- f^{(r)}_m, f^{(r+1)}_m,   ...,f^{(N)}_m )    \Big \|_{L^2_k }  \notag \\
   &\le    \sum_{r=1}^N    \sum_{k \le \overline m_N}  \sum_{\begin{subarray}{c}|l|=k,\\ (l,l^o) \in D_N \end{subarray}}   \frac{m_1!\cdots m_N!}{l!\, l^o !} 
  k! \Big \|  \widehat \bigstar^l_{l^o}  (|f^{(1)}_n|,...,|f^{(r-1)}_n|,|f^{(r)}_n- f^{(r)}_m|,|f^{(r+1)}_m|,   ...,|f^{(N)}_m| )    \Big \|_{L^2_k } \notag \\
\end{align}
which converges to zero as $n,m \to \infty$ by dominated convergence since the integrands  are bounded by $ \widehat \bigstar^l_{l^o}  ( \one_{A_1},..., \one_{A_N}) $ and converge in measure to zero. Hence $\prod_{j=1}^N I_{m_j}(f^{(j)}_n)_T $ is a Cauchy sequence in $L^2(\Pbb).$ 
On the other hand, \eqref{step-indicator} implies that  $I_{m_j}(f^{(j)}_n)_T\to I_{m_j}(\one_{A_j})_T$ in $L^2(\Pbb)$ for $j=1,...,N,$
and consequently $\prod_{j=1}^N I_{m_j}(f^{(j)}_n)_T \to \prod_{j=1}^N I_{m_j}(\one_{A_j})_T$ in probability. This gives   
$\prod_{j=1}^N I_{m_j}(\one_{A_j})_T \in L^2(\Pbb),$ as  it is  also the limit of the 
Cauchy sequence in   $ L^2(\Pbb).$
It remains to show that the r.h.s. of  \eqref{product-formula}  (used now for the $\hat f^{(1)}_n,...,\hat f^{(N)}_n$) converges in  $L^2(\Pbb)$   if \eqref{step-indicator} holds. This follows from the convergence  
$$  \widehat \bigstar^l_{l^o}  (\hat f^{(1)}_n,...,\hat f^{(N)}_n)  - \widehat \bigstar^l_{l^o}  (\hat\one_{A_1},...,\hat\one_{A_N})$$
in $L^2_k.$ \smallskip \\
 \underline{Step 2.}  Let  $f^{(i)} \in L^2_{m_i}$ for $i=1,...,N$ satisfy 
\eqref{L2-condition}. Especially, then the r.h.s. of  \eqref{product-formula} is well-defined and in $L^2(\Pbb).$  
We show  \eqref{product-formula} by approximation as follows. Since \eqref{product-formula} holds for indicator functions, it holds for simple functions. 
We  approximate any $f^{(j)}$  by  simple functions $(g^{(j)}_n)_{n=1}^\infty$ such that   $0 \le g^{(j), \pm}_n \uparrow f^{(j),\pm}. $
Applying now \eqref{estimate-of-product} to $\left \|\prod_{j=1}^N I_{m_j}(g^{(j)}_n)_T-\prod_{j=1}^N I_{m_j}(g^{(j)}_m)_T \right  \|_{L^2(\Pbb)}$ we have again
convergence to zero by  dominated convergence which holds thanks to \eqref{L2-condition}. Repeating the arguments of  Step 1 for the present situation it follows that 
$\prod_{j=1}^N I_{m_j}(f^{(j)})_T  \in L^2(\Pbb)$  and that \eqref{product-formula} holds.  \smallskip \\
The relation \ref{expectation-product-formula} follows immediately from \ref{product-formula} by taking $k=0.$
\end{proof}
Similar to \cite[Theorem 2.2.] {DoPecc18}  which concerns the product of $2$ multiple integrals, we expect that there exists  an if and only if  relation between  $\prod_{j=1}^N I_{m_j}(f^{(j)}  ) \in L^2(\Pbb)$ and the existence of the product formula. This was already addressed by Surgailis in  \cite{S84}.

\begin{conjecture} \label{conjecture}
Let $f^{(j)} \in L^2_{m_j}$ for $j=1,...,N.$
$$ \prod_{j=1}^N I_{m_j}(f^{(j)})_T \quad \text{  is square integrable}$$
$$ \iff$$
\equa
 \sum_{\begin{subarray}{c}|l|=k,\\ (l,l^o) \in D_N \end{subarray}}   \frac{1}{l!\, l^o !}  \,\,
  \widehat \bigstar^l_{l^o}  (f^{(1)},...,f^{(N)} )  \in L^2_k,  \quad \text{for each} \,\,\ k = 1,... ,m_1+...+ m_N,
  \tion
and the expression is finite for $k=0$. Moreover, the product formula  \eqref{product-formula} holds.
\end{conjecture}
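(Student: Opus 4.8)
The plan is to prove the two implications separately, reducing both to identifying the chaos components of the product $P:=\prod_{j=1}^N I_{m_j}(f^{(j)})_T$ with the signed contraction sums. Writing $H_k:=\sum_{|l|=k,\,(l,l^o)\in D_N}\frac{1}{l!\,l^o!}\,\widehat\bigstar^l_{l^o}(f^{(1)},\dots,f^{(N)})$, the content of Conjecture \ref{conjecture} is that $P\in L^2(\PP)$ holds if and only if $H_k\in L^2_k$ for every $k=1,\dots,m_1+\dots+m_N$ (with the $k=0$ term finite), in which case $P=\sum_k m_1!\cdots m_N!\,I_k(H_k)_T$. Since the $I_k$ map onto mutually orthogonal chaoses with $\|I_k(h)\|_{L^2(\PP)}^2=k!\,\|\hat h\|_{L^2_k}^2$, the right-hand side is a genuine orthogonal expansion, so the entire problem is to match it term-by-term with the unique It\^o chaos decomposition of $P$.

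For the reverse implication ($\Leftarrow$) I would argue by approximation, as in Step 2 of the proof of Theorem \ref{L2condition-thm}, but \emph{without} the absolute-value domination \eqref{L2-condition}. Approximate each $f^{(j)}$ by simple functions $g^{(j)}_n$ with $0\le g^{(j),\pm}_n\uparrow f^{(j),\pm}$ and $g^{(j)}_n\to f^{(j)}$ in $L^2_{m_j}$; for simple functions \eqref{product-formula} is already available from Step 1. Each factor converges, $I_{m_j}(g^{(j)}_n)_T\to I_{m_j}(f^{(j)})_T$ in $L^2(\PP)$, so the products converge in probability to $P$. The right-hand sides converge in $L^2(\PP)$ provided the corresponding kernels $H_k^{(n)}$ converge to $H_k$ in $L^2_k$; by orthogonality this then forces $P\in L^2(\PP)$ and pins down its chaos components. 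The obstacle here is that the operators $\widehat\bigstar^l_{l^o}$ are \emph{not} $L^2$-continuous in general, so $g^{(j)}_n\to f^{(j)}$ in $L^2_{m_j}$ does not by itself give $H_k^{(n)}\to H_k$ in $L^2_k$. Each sign-pure piece $\widehat\bigstar^l_{l^o}(f^{(1),\varepsilon_1},\dots,f^{(N),\varepsilon_N})$, $\varepsilon\in\{+,-\}^N$, converges monotonically along $g^{(j),\varepsilon_j}_n\uparrow f^{(j),\varepsilon_j}$, but it need not itself lie in $L^2_k$; the $L^2_k$-membership is a property of the \emph{cancelling} sum $H_k$ alone. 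Passing from monotone, pointwise convergence of the sign-pure pieces to $L^2_k$-convergence of their signed combination, using the standing hypothesis $H_k\in L^2_k$ to absorb the cancellations, is the nontrivial core even of this direction.

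For the forward implication ($\Rightarrow$) I would use uniqueness of the It\^o chaos expansion. Assuming $P\in L^2(\PP)$, write $P=\sum_{k\ge0}I_k(h_k)_T$ with symmetric $h_k\in L^2_k$, and show by downward induction on $k$ that the expansion terminates at $k=m_1+\dots+m_N$ and that $h_k=m_1!\cdots m_N!\,H_k$. The top component $k=m_1+\dots+m_N$ corresponds to $l$ consisting of the singletons used to full multiplicity and $l^o=0$, so $H_{m_1+\dots+m_N}=\widehat{f^{(1)}\otimes\cdots\otimes f^{(N)}}$ lies automatically in $L^2_{m_1+\dots+m_N}$; subtracting $I_{m_1+\dots+m_N}(\,\cdot\,)_T$ leaves a square-integrable remainder of strictly lower top degree. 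At each descending step the remainder's leading chaos component must, by the combinatorial identity already verified on simple functions, coincide with the formal contraction sum $m_1!\cdots m_N!\,H_k$, and since the remainder is square integrable this yields $H_k\in L^2_k$.

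The main obstacle is the inductive identification step of the forward direction: one must show that the chaos projection of $P$ onto degree $k$ \emph{equals} the signed contraction sum $m_1!\cdots m_N!\,H_k$ even when the individual absolute-value contractions $\widehat\bigstar^l_{l^o}(|f^{(1)}|,\dots,|f^{(N)}|)$ diverge — that is, one must rule out that square integrability of $P$ arises from cancellations failing to descend to $L^2_k$-membership of each chaos kernel. This is exactly a ``closedness'' property of the chaos-projection-composed-with-product map relative to $L^2$, and it is precisely the point D\"obler and Peccati settled for $N=2$ in the pure-jump case by a Fock-space/combinatorial analysis \cite{DoPecc18A}. Extending their argument to an arbitrary number $N$ of factors and, simultaneously, to the mixed Brownian--Poisson setting — where the integrator splits three ways according to $|{\bf e}_n|=1,2,\ge3$ as in \eqref{Y-explained}, and the Gaussian contractions behave differently from the jump ones — is the genuinely open content of the conjecture.
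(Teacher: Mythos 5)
There is a genuine gap --- in fact, two, and you name them yourself. The statement you are addressing is Conjecture \ref{conjecture}, which the paper explicitly leaves \emph{open} (the introduction states that the case of an arbitrary number of factors ``seems to be still open'' and that the problem is \emph{formulated} as a conjecture); the paper contains no proof for you to be compared against, and your proposal does not supply one. In the reverse direction, the pivotal claim --- that the right-hand sides converge in $L^2(\Pbb)$ ``provided the corresponding kernels $H_k^{(n)}$ converge to $H_k$ in $L^2_k$'' --- is exactly what is not available: the hypothesis only gives membership $H_k\in L^2_k$ of the limiting signed sum, not convergence of the approximating kernels, and since the operators $\widehat\bigstar{}^l_{l^o}$ are not $L^2$-continuous, no amount of monotone convergence of the sign-pure pieces $\widehat\bigstar{}^l_{l^o}(f^{(1),\varepsilon_1},\dots,f^{(N),\varepsilon_N})$ (which individually need not lie in $L^2_k$, nor even be given by absolutely convergent integrals) yields $L^2_k$-convergence of their cancelling combination. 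This is precisely why Theorem \ref{L2condition-thm} imposes the domination hypothesis \eqref{L2-condition} on the \emph{absolute values}; dropping it is the whole point of the conjecture, and your argument silently reinstates its conclusion as an assumption.

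In the forward direction the situation is the same: the top chaos component is indeed harmless (for $k=m_1+\dots+m_N$ one necessarily has $l^o=0$ and only singleton blocks, so $H_k$ is the symmetrized tensor product, which is automatically in $L^2_k$), but the descending identification step --- that the degree-$k$ chaos projection of the square-integrable remainder \emph{equals} $m_1!\cdots m_N!\,H_k$ --- cannot be deduced from ``the combinatorial identity already verified on simple functions,'' because transferring an identity from simple functions to general $L^2$ kernels requires exactly the continuity/closedness property that fails here. For general kernels it is not even clear that the individual contraction integrals defining $H_k$ converge absolutely, so the object to be identified may a priori not be well defined. You candidly acknowledge that this closedness property is what D\"obler and Peccati established for $N=2$ in the pure-jump case and that its extension to $N\geq 3$ and to the mixed Brownian--Poisson setting ``is the genuinely open content of the conjecture.'' That concession is accurate, but it means your text is a well-organized reduction of the conjecture to itself, not a proof: both implications terminate at the unresolved step rather than resolving it.
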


\section{Applications}\label{sec:appl}

\subsection{The expectation of a power of a stochastic integral}\label{subsec:cum}

We apply now \eqref{product-formula}
to obtain explicit formulas for  moments and  cumulants of $I_1(f)_T$. The approach in \cite[Chapter 7]{PT11}, considers the Brownian and Poisson setting separately and uses diagram formulae to   treat all kinds of power moments and products. This program can of course be conducted here as well -- however, for this application, we follow a more elementary way.

To point  out the connection between the sets $ {\bf e}_n$ and  the numbers  $l_n$ and $l^o_n$ more clearly, we will  use  the notation $l_{\bf s}$  or $l_{ {\bf e}_n}$  instead of  $l_n$  if ${\bf s}= {\bf e}_n$,
and similarly for $l_n^o.$ 

First we observe that to compute $\mathbb{E}I_1(f)^N_T$, for some  $N\geq2$  (which means we have  $m_1=\dotsb=m_N=1$), we need to extract the term for $k=0$ in \eqref{product-formula}.  This has the consequence  that: 
\begin{itemize}
\item  The identity $\widehat{\bigstar}_{l^o}^0(f,\dotsc,f)=\bigstar_{l^o}^0(f,\dotsc,f)$ holds, since no $z$-variables are involved in these expressions.
\item It is only necessary to consider tuples $(0,\dotsc,0,l^o_1,\dotsc,l^o_{s_N})$ in $D_N$  so that for each $j\in \{1,\dotsc,N\}$, we need $\sum_{{\bf s}\ni j}l^o_{\bf s}=m_j=1$.
\end{itemize}
The latter point implies that one can establish a bijection between the tuples in $D_N^*$, i.e.\ those tuples from $D_N$ such that $l_1=\dotsb=l_{s_N}=l^o_{\{1\}}=\dotsb=l^o_{\{N\}}=0$, and the partitions of $\{1,\dotsc,N\}$ of block size $\geq2$. (We call the elements of the partition blocks and their cardinality block size.) We consider this set of partitions, since by \eqref{cen}, blocks of size $1$ do not yield a term in the product formula for $k=0$. Note that this bijection is one particular case of those that are analyzed and numbered by diagrams and multigraphs in \cite{PT11}.

 We thus get
\begin{align*} 
\{\text{Partitions of block size }\geq 2\}\to D_N^*,\quad P\mapsto \one_P=
\sum_{{\bf s}\in P}\one_{\{{\bf s}\}}.
\end{align*}

For one such partition consisting of $q$ sets excluding singletons,  $\{{\bf s}_1,\dotsc,{\bf s}_q\}$, we denote the block sizes by $p_i:=|{\bf s}_i|$. To compute $\bigstar_{l^o}^0(f,\dotsc,f)$, we have to investigate the variables $(z_{1:l_n^o}^{{\bf e}_n,0})$ that arise in the product $\prod_{j=1}^Nf\left((z_{1:l_n^o}^{{\bf e}_n,0})_{1\leq n\leq s_N, j\in {\bf e}_n}\right)$. 
 For a given partition $\{{\bf s}_1,\dotsc,{\bf s}_q\}$,
 if $j$ is contained in ${\bf s}_i$, then the same variable $z_j^{{\bf s}_i,0}$ appears in $p_i$ of the product's factors, and altogether there can be only $q$ different variables. Hence we obtain

\begin{align*}
\bigstar_{l^o}^0(f,\dotsc,f)=\prod_{i=1}^q \int_{(0,T]\times\mathbb{R}}f(z)^{p_i} c_{{\bf s}_i}(z)\m(dz),  
\end{align*}
and, using \eqref{cen} again for the sets of block size $2$,

\begin{align}\label{bigstarsamef}
\bigstar_{l^o}^0(f,\dotsc,f)=\bigg(\int_{(0,T]\times\mathbb{R}}f(z)^2\m (dz)\bigg)^{|\{i:p_i=2\}|}\cdot \prod_{p_i>2} \int_{(0,T]\times{\mathbb{R}_0}}f(z)^{p_i} \m(dz).
\end{align}
To find how many partitions deliver such a term, we   first consider  the number of all  partitions   of $\{1,\dotsc,N\}$ into $q$ blocks of size $p_1,\dotsc,p_q$  (that is, without the restriction $p_i\geq2$).

If we assume that there are  $j_a$ blocks of size $a$ in the partition, we have the relationships $j_1+j_2+\dotsb+j_{N-q+1}=q$ (and there cannot be more than $N-q+1$), $j_a=|\{i:p_i=a\}|$ and $p_1+\dotsb+p_q=j_1+2j_2+\dotsb+(N-q+1)j_{N-q+1}=N$.  Then the number of such partitions is given by the coefficient $b_{N,(j_1,\dotsc,j_{N-q+1})}$ of the monomial $X_1^{j_1}\cdots X_{N-q+1}^{j_{N-q+1}}$ in the $N$-th partial exponential Bell polynomial $B_{N,q}(X_1,\dotsc,X_{N-q+1})$  (see, for example, \cite[Definition 11.2]{Chara} or \cite[Definition 2.2.1]{PT11}). In particular, the coefficient is
$$b_{N,(j_1,\dotsc,j_{N-q+1})}=\frac{N!}{j_1!(1!)^{j_1}j_2!(2!)^{j_2}\dotsc j_{N-q+q}!((N-q+1)!)^{N-q+1}}.$$
The polynomial containing the information  about  all partitions i.e.with arbitrarily many blocks is the (exponential) Bell polynomial $B_N(X_1,\dotsc,X_N):=\sum_{q=1}^NB_{N,q}(X_1,\dotsc,X_{N-q+1})$. \\
Since we exclude partitions with block size $1$, we only have to consider the polynomial $B_{N}(0,X_2,\dotsc,X_N)$. In \eqref{bigstarsamef}, each  $p_i$-integral factor appears $j_{p_i}$ times.  Therefore, we get the following relations:

\begin{proposition}  \label{Bell-prop} {\color{white} .}
\begin{enumerate}[(i)]
\item \label{our-moments} Let $f \in  L^2_1\cap L^N_1.$  Then
\begin{align*}
\hspace*{-2em} \mathbb{E}I_1(f)_T^N = B_N\left(0,\int_{(0,T]\times\mathbb{R}}f(z)^2\m(dz),\int_{(0,T]\times{\mathbb{R}_0}}f(z)^{3} \m(dz),\dotsc,\int_{(0,T]\times{\mathbb{R}_0}}f(z)^{N} \m(dz)\right),
\end{align*}
or more in detail, 
\begin{align*}
&\hspace*{-2em} \mathbb{E}I_1(f)_T^N=\\
&\hspace*{-2em} \sum_{q=1}^N\sum_{\substack{\sum_{i=2}^{N-q+1}j_i=q\\ \sum_{i=2}^{N-q+1} ij_i=N}}  \!\!\!\!b_{N,(j_1,\dotsc,j_{N-q+1})}\left (\int_{(0,T]\times{\mathbb{R}}}\!\!\!\!f(z)^{2} \m(dz)\right )^{j_2}\!\!\cdots \left (\int_{(0,T]\times{\mathbb{R}_0}} \!\!\!\!f(z)^{N-q+1} \m(dz) \right )^{j_{N-q+1}}\hspace*{-0.5em}.
\end{align*}

\item \label{Brown-moments} For the Brownian case we recover the known relation
\begin{align*}
\mathbb{E}I_1(f)_T^N=B_N\left(0,\int_{(0,T]}f(s)^2\sigma^2ds,0,\dotsc,0\right)= (N-1)!!\Big(\int_{(0,T]}f(s)^2\sigma^2ds\Big)^{\frac{N}{2}}\one_{\{N\in 2\mathbb{N}\}}.
\end{align*}
\item \label{cumulants-relation} If $f \in  L^2_1\cap L^N_1$ for all $N \in \N$  then the  cumulants $\kappa_N$ of $I_1(f)_T$ are given by 
$$\kappa_1=0,   \quad \kappa_2=\int_{(0,T]\times{\mathbb{R}}}f(z)^{2}\m(dz), \,\, \kappa_N=\int_{(0,T]\times{\mathbb{R}_0}}f(z)^{N}\m(dz),  \,\, N\ge 3.$$

\end{enumerate}
\end{proposition}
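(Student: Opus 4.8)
The plan is to obtain all three parts from the expectation product formula \eqref{expectation-product-formula}, specialized to $m_1=\dots=m_N=1$ and $f^{(1)}=\dots=f^{(N)}=f$, combined with the combinatorial dictionary between the admissible tuples $((0,\dots,0),l^o)\in D_N$ and set partitions set up in the discussion above. For (i), I would first record that $f\in L^2_1\cap L^N_1$ forces $f\in L^p_1$ for every $2\le p\le N$, by log-convexity of the $L^p(\m)$-norms (interpolation); this makes each factor $\int f^{p_i}\,\m(\rmd z)$ below finite and, via a standard Rosenthal-type moment inequality for $I_1$, gives $I_1(f)_T\in L^N(\PP)$, so the $N$-th moment exists. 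Since the relevant $k=0$ tuples satisfy $l^o_n\in\{0,1\}$ with $\sum l^o_n\one_{{\bf e}_n}(j)=1$, the multinomial prefactor $\tfrac{m_1!\cdots m_N!}{l^o!}$ in \eqref{expectation-product-formula} equals $1$; applying the bijection $D_N^*\leftrightarrow\{$partitions of block size $\ge2\}$ and the factorization \eqref{bigstarsamef} rewrites the sum as $\sum_{P}\prod_{B\in P}\int f^{|B|}\,\m(\rmd z)$ over partitions $P$ with no singleton block. Grouping by the block-size profile $(j_1,\dots,j_{N-q+1})$, which occurs $b_{N,(j_1,\dots,j_{N-q+1})}$ times, and summing over the number of blocks $q$ reproduces precisely $B_N\big(0,\int f^2\,\m(\rmd z),\dots,\int f^N\,\m(\rmd z)\big)$ (the first argument being $0$ since singletons are excluded by \eqref{cen}); writing $B_N=\sum_q B_{N,q}$ yields the detailed form.

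For (ii) I would specialize to $\nu=0$, so $\m(\rmd t,\rmd x)=\sigma^2\,\rmd t\,\delta_0(\rmd x)$ is carried by $\{x=0\}$. Then $\int_{(0,T]\times\R_0}f^{p}\,\m(\rmd z)=0$ for all $p\ge3$ because the domain excludes $x=0$, whereas $\int_{(0,T]\times\R}f^2\,\m(\rmd z)=\int_{(0,T]}f(s)^2\sigma^2\,\rmd s$, so only the second argument of $B_N$ survives, and it remains to invoke the elementary evaluation $B_N(0,c,0,\dots,0)=(N-1)!!\,c^{N/2}\one_{\{N\in 2\N\}}$. I would prove this directly: the surviving partitions are exactly the perfect matchings of $\{1,\dots,N\}$, of which there are $(N-1)!!$ when $N$ is even and none when $N$ is odd, each contributing $c^{N/2}$. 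For (iii) I would invoke the standard moment–cumulant relation $\EE X^N=B_N(\kappa_1,\dots,\kappa_N)$, which determines the cumulant sequence $(\kappa_N)$ uniquely; comparing term by term with the representation of $\EE I_1(f)_T^N$ from (i) — valid for all $N$ under $f\in L^2_1\cap L^N_1$ for every $N$ — I would read off $\kappa_1=0$, $\kappa_2=\int_{(0,T]\times\R}f^2\,\m(\rmd z)$, and $\kappa_N=\int_{(0,T]\times\R_0}f^N\,\m(\rmd z)$ for $N\ge3$.

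The main obstacle is not the combinatorics, which is already packaged in \eqref{bigstarsamef} and the Bell coefficients, but the integrability bookkeeping: the expectation formula is a corollary of Theorem \ref{L2condition-thm}, proved under \eqref{L2-condition}, a condition that for $N$ order-one factors would effectively require $f\in L^{2N}_1$, far more than the assumed $f\in L^2_1\cap L^N_1$. The key point to get right is that in the mean only the deterministic $k=0$ term survives (all higher multiple integrals being centred by Lemma \ref{iterated-and-multiple}), and that this term involves solely the integrals $\int f^p\,\m(\rmd z)$ with $2\le p\le N$. I would therefore legitimize \eqref{expectation-product-formula} under the weaker hypothesis by a truncation argument: approximate $f$ by $f_n\in\bigcap_{p\ge2}L^p_1$ with $|f_n|\le|f|$ and $f_n\to f$ in each $L^p_1$, $2\le p\le N$; the formula holds for each $f_n$, the left-hand side converges because $I_1(f_n)_T\to I_1(f)_T$ in $L^N(\PP)$, and the right-hand side converges by dominated convergence for each $\int f_n^{p_i}\,\m(\rmd z)$.
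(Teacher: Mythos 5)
Your proposal is correct, and its combinatorial core coincides with the paper's route: extract the $k=0$ term of \eqref{product-formula}, identify the contributing tuples $((0,\dots,0),l^o)\in D_N$ with partitions of $\{1,\dots,N\}$ having no singleton blocks (singletons being killed by \eqref{cen}), factorize via \eqref{bigstarsamef}, and count block-size profiles with the Bell coefficients $b_{N,(j_1,\dots,j_{N-q+1})}$; your treatment of (ii) (with $\nu=0$ only the $\int f^2\,\m(\rmd z)$ argument survives, and the surviving partitions are the $(N-1)!!$ perfect matchings) and of (iii) (the moment--cumulant bijection, which the paper takes from \cite[Corollary 3.2.2]{PT11}) is likewise the same.

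The genuine difference is analytic, not combinatorial. The paper's proof of (i) is literally ``clear from the above considerations'', and those considerations invoke \eqref{product-formula}, hence Theorem \ref{L2condition-thm}, whose hypothesis \eqref{L2-condition} is substantially stronger than the stated assumption $f\in L^2_1\cap L^N_1$: for instance the tuple with $l_{\{1,\dots,N\}}=1$ and all other entries zero forces $\int_{(0,T]\times\Rbb_0} f^{2N}\,\rmd\m<\infty$, i.e.\ essentially $f\in L^{2N}_1$. You correctly spot this mismatch and close it: interpolation gives $f\in L^p_1$ for $2\le p\le N$; a Rosenthal-type bound gives $I_1(f)_T\in L^N(\Pbb)$, so the moment exists; and the expectation identity is transported from truncations $f_n\in\bigcap_{p\geq2}L^p_1$ with $|f_n|\le|f|$ (for which the formula holds, e.g.\ via Proposition \ref{alpha-products} with $m_j=1$) to $f$ itself, using $I_1(f_n)_T\to I_1(f)_T$ in $L^N(\Pbb)$ on the left and dominated convergence in each factor $\int f_n^{p_i}\,\rmd\m$ on the right. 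This approximation step is the only nontrivial analytic content of the proposition under its stated hypothesis, and it is exactly what the paper's one-line proof leaves implicit; your write-up is therefore more complete than the paper's, while following the same overall strategy.
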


\begin{proof}
\eqref{our-moments} is clear from the above considerations, \eqref{Brown-moments}  is obvious.  \eqref{cumulants-relation} one gets from \eqref{our-moments} by the formula relating cumulants and  moments (see  \cite[Corollary 3.2.2]{PT11}  or \cite{P16}). \end{proof}

\subsection{Expectations of products of stochastic integrals}
In the same way as before, we may consider  the product of integrals of different functions, $\mathbb{E}I_1(f^{(1)})_T\cdots I_1(f^{(N)})_T$. Using partitions as above, we can compute   $\bigstar_{l^o}^0(f^{(1)},\dotsc ,f^{(N)})$ by relating $l^o$ with the according partition $\{{\bf s}_1,\dotsc,{\bf s}_q\}$ without singletons and get
\begin{align*}
 \bigstar_{l^o}^0(f^{(1)},\dotsc, f^{(N)})=\prod_{i=1}^q\int_{(0,T]\times\R}\prod_{j\in {\bf s}_i}f^{(j)}(z)\m(dz).
\end{align*}
Taking all partitions into account, we obtain
\begin{align*}
&\mathbb{E}I_1(f^{(1)})_T\dotsc I_1(f^{(N)})_T \\
&=\sum_{\substack{P\in \mathcal{P}^*(N)\\P=\{{\bf s}_q,\dotsc,{\bf s}_q\}}}
                        \left (  \prod_{\substack{i=1 \\  |{\bf s}_i|=2}}^q\int_{(0,T]\times\R}\prod_{j\in {\bf s}_i}f^{(j)}(z)\m(dz)\right) 
                         \left (  \prod_{\substack{i=1 \\  |{\bf s}_i|>2}}^q\int_{(0,T]\times\R_0}\prod_{j\in {\bf s}_i}f^{(j)}(z)\m(dz)\right).
\end{align*}


\subsection{Long time behaviour and limit theorems}

In this section, which is inspired by \cite[\S4]{LPST14}, we are going to address the long time behaviour of integrals of the form $I_1(f)_T$ and, combining \eqref{product-formula} with the method of moments and cumulants (see \cite[\S A.3]{PN12}), we deduce a central limit theorem (from now on CLT) for $T\rightarrow+\infty$ in some special cases. 
 
Using  the properties of the cumulants and setting $\widetilde I_1(f)_T:= \frac{I_1(f)_T}{\Ebb[I_1\p2(f)_T]\p{1/2}}$ we get  from Proposition \ref{Bell-prop}  \eqref{cumulants-relation} that
\[
\kappa_N\big(\widetilde I_1(f)_T\big)=\begin{cases}\displaystyle1,&\quad N=2,\\
\frac{\int_{(0,T]\times\Rbb_0}f\p N(z)\m(\rmd z)}{\left(\int_{(0,T]\times\Rbb}f\p 2(z)\m(\rmd z)\right)\p{N/2}},&\quad N\geq3.
\end{cases}
\] 
{ So, if $f$ is measurable on $[0,\infty)\times\R$ and $f\in L^N([0,T]\times\R,\m)$ for all $N\geq 1$ and for every $T>0$
satisfies also 
\begin{equation}\label{eq:clt}
\lim_{T\rightarrow+\infty} \frac{\int_{(0,T]\times\Rbb_0}f\p N(z)\m(\rmd z)}{\left(\int_{(0,T]\times\Rbb}f\p 2(z)\m(\rmd z)\right)\p{N/2}}=0,\quad N\geq3,
\end{equation} 
then, by \cite[Theorem 1]{Ja88} we get that $\widetilde I_1(f)_T$ converges in distribution to $X\sim\Nscr(0,1)$ as $T\rightarrow+\infty$. Note that for $f\geq0$ condition \eqref{eq:clt} can be reformulated in terms of norms.}

We now consider functions $f$ of the form $f(z)=f(t,x)=g(t)h(x)$, where  $g: [0,\infty) \to \R$ satisfies $\int_0^T |g(t)|^N \rmd t < \infty$ for all $T>0$ and $h\in L\p N(\rho)$ for every $N\geq1.$ We set  $\rho(\rmd x)=\sig\p2\delta_0(\rmd x)+\nu(\rmd x)$. We assume $\|h\|_{L\p2(\rho)}=1$ and denote $\nu(h\p N):=\int_{\Rbb_0}h\p N(x)\nu(\rmd x)$. In this special case, we have
\begin{equation*} 
\kappa_N\big(\widetilde I_1(f)_T\big)=\nu(h\p N)\frac{\int_0\p T g\p N(t)\rmd t}{\left(\int_0\p Tg\p 2(t)\rmd t\right)\p{N/2}},\quad N\geq3.
\end{equation*}
Therefore, a sufficient condition for the CLT for $\widetilde I_1(f)_T$, is
\begin{equation}\label{eq:con.clt.prod}
\lim_{T\rightarrow+\infty}\frac{\int_0\p T g\p N(t)\rmd t}{\left(\int_0\p Tg\p 2(t)\rmd t\right)\p{N/2}}=0,\quad N\geq3.
\end{equation}
Condition \eqref{eq:con.clt.prod} is trivially satisfied if $g$ is a polynomial function or if it is the product of a polynomial function with a rapidly decaying function as, for example, if $g(t)=t\p m\rme\p{-t\p2/2}$.
Moreover, if $g$ is constant, which means that $(\widetilde I_1(f)_T)_{T\ge 0} $ is a L\'evy process,  \eqref{eq:con.clt.prod} is always satisfied.

More interesting is the case $g(t)=\rme\p{ t\p\al}$, with $\al>0$, that we illustrate in the following lemma.
\begin{lemma}\label{lem:exp.al}
Let $g(t)=\rme\p{t\p\al }$, $\al>0$. We then have
\[
\lim_{T\rightarrow+\infty} \frac{\int_0\p T\rme\p{Nt\p\al}\rmd t}{\left(\int_0\p T\rme\p{2t\p\al}\rmd t\right)\p{N/2}} =\begin{cases}
0,&\quad\al<1,\\
\frac{2\p{N/2}}{N},&\quad\al=1,\\
+\infty,&\quad{\al>1},
\end{cases}\quad N\geq3.
\]
\end{lemma}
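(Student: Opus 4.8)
The plan is to reduce the whole statement to a single endpoint (Laplace-type) asymptotic for integrals of the form $\int_0^T \rme\p{c t\p\al}\rmd t$ with $c>0$, and then to substitute $c=N$ in the numerator and $c=2$ in the base of the denominator. Concretely, I would first establish that, as $T\rightarrow+\infty$,
\[
\int_0^T \rme\p{c t\p\al}\,\rmd t \sim \frac{\rme\p{cT\p\al}}{c\al\, T\p{\al-1}}.
\]
This expresses the fact that the mass of the integral concentrates near the right endpoint $t=T$, where the integrand is largest, and it is the only analytic input needed.

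To prove this equivalence rigorously I would integrate by parts away from the origin. Writing $\rme\p{ct\p\al}\,\rmd t = \tfrac{1}{c\al}\,t\p{1-\al}\,\rmd(\rme\p{ct\p\al})$ and integrating by parts on $[\ep,T]$ for a fixed $\ep>0$ gives
\[
\int_\ep^T \rme\p{ct\p\al}\,\rmd t = \frac{\rme\p{cT\p\al}}{c\al\, T\p{\al-1}} - \frac{\rme\p{c\ep\p\al}}{c\al\, \ep\p{\al-1}} - \frac{1-\al}{c\al}\int_\ep^T \rme\p{ct\p\al}\,t\p{-\al}\,\rmd t.
\]
The contribution of the integral over $[0,\ep]$ together with the boundary term at $\ep$ is a bounded constant, hence negligible against the exponentially growing leading term $\rme\p{cT\p\al}/(c\al\, T\p{\al-1})$. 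For the remainder integral I would observe that the extra factor $t\p{-\al}\to0$, so a comparison argument (or a second integration by parts) shows that it is of strictly smaller order than the leading term; dividing through by $\rme\p{cT\p\al}/(c\al\, T\p{\al-1})$ then yields the claimed equivalence.

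With this equivalence in hand, the numerator behaves like $\rme\p{NT\p\al}/(N\al\, T\p{\al-1})$ (take $c=N$), while $\int_0^T\rme\p{2t\p\al}\rmd t\sim \rme\p{2T\p\al}/(2\al\, T\p{\al-1})$ (take $c=2$), so the denominator is asymptotic to $\rme\p{NT\p\al}\bigl(2\al\, T\p{\al-1}\bigr)\p{-N/2}$. The factors $\rme\p{NT\p\al}$ cancel and the ratio reduces to
\[
\frac{2\p{N/2}}{N}\,\bigl(\al\, T\p{\al-1}\bigr)\p{(N-2)/2}.
\]
Since $N\ge3$ the exponent $(N-2)/2$ is strictly positive, so the three cases follow immediately from the sign of $\al-1$: for $\al<1$ the factor $T\p{\al-1}\to0$ forces the limit to be $0$; for $\al=1$ the factor is identically $1$ and the limit equals $2\p{N/2}/N$; and for $\al>1$ the factor diverges, giving $+\infty$.

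The only genuine work is the first step, namely justifying the endpoint asymptotic for $\int_0^T \rme\p{ct\p\al}\rmd t$ and, in particular, controlling the remainder integral $\int_\ep^T \rme\p{ct\p\al}\,t\p{-\al}\rmd t$ well enough to conclude that it is $o$ of the leading term. Everything after that is algebraic simplification and a sign analysis of $\al-1$, both routine.
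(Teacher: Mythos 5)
Your proposal is correct and is essentially the paper's own argument: both proofs rest on the endpoint asymptotic $\int_0^T\rme^{ct^\al}\,\rmd t\sim \rme^{cT^\al}/(c\al T^{\al-1})$ (with $c=N$ in the numerator and $c=2$ in the denominator), after which the exponentials cancel and the three cases are read off from the sign of $\al-1$ in the surviving factor $\frac{2^{N/2}}{N}\bigl(\al T^{\al-1}\bigr)^{(N-2)/2}$. The only difference is how that asymptotic is justified---you integrate by parts and control the remainder by a comparison argument, whereas the paper substitutes $u=t^\al$ and invokes the corresponding L'H\^{o}pital-type limit $\lim_{x\to+\infty} k\int_0^x \rme^{ku}u^{1/\al-1}\,\rmd u\,/\,(\rme^{kx}x^{1/\al-1})=1$---a minor technical variation rather than a different route.
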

\begin{proof}
We use  the  variable transform $u = t^\al$  to rewrite the expression 
$$\frac{\int_0\p T\rme\p{Nt\p\al}\rmd t}{\left(\int_0\p T\rme\p{2t\p\al}\rmd t\right)\p{N/2}}  =  \frac{ \frac{1}{\al} \int_0^{T^\al}\rme\p{Nu}  u^{\frac{1}{\al}-1}   \rmd u}
{\left( \frac{1}{\al}     \int_0^{T^\al}\rme\p{2u}   u^{\frac{1}{\al}-1}    \rmd u\right)\p{N/2}}.
 $$
Since for all $k\in \N$ it holds  $ \lim_{x \to +\infty} \frac{k \int_0^xe^{ku}  u^{\frac{1}{\al}-1}   \rmd u}{e^{kx}  x^{\frac{1}{\al}-1}  }=1,$ we get 
\equa
 \lim_{T \to +\infty}  \frac{   \frac{1}{\al}   \int_0\p {T\p\al}\rme\p{Nu}  u^{\frac{1}{\al}-1}   \rmd u}
{\left( \frac{1}{\al}     \int_0\p{T\p\al}\rme^{2u}   u^{\frac{1}{\al}-1}    \rmd t\right)\p{N/2}}
&=&   \lim_{T \to +\infty}   \frac{  \frac{1}{N\al }   \rme^{NT^\al}  T^{1-\al} }{   \left (   \frac{1}{2 \al}    \rme^{2T^\al }  T^{1-\alpha} \right)^{N/2} } 
=
\begin{cases}
0,&\quad\al<1,\\
\frac{2\p{N/2}}{N},&\quad\al=1,\\
+\infty,&\quad{\al>1},
\end{cases}
 \tion
 
\end{proof}
The following theorem is an immediate consequence of Lemma \ref{lem:exp.al}.
\begin{theorem}\label{thm:clt}
Let $f(t,x)=h(x)\rme\p{t\p\al}$, where $h\in L\p N(\rho)$ for every $N\geq1$ and $\|h\|_{L\p2(\rho)}=1$. We have:

\textnormal{(i)} If $\al<1$, $\widetilde I_1(f)_T$ converges in law as $T\rightarrow+\infty$ to a standard normal distributed random variable.

\textnormal{(ii)} If $\al>1$, $\widetilde I_1(f)_T$ cannot converge in law as $T\rightarrow+\infty$ to a random variable whose distribution is determined by its moments.
\end{theorem}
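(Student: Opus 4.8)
The plan is to read both statements off the cumulant formula
\[
\kappa_N\big(\widetilde I_1(f)_T\big)=\nu(h^N)\,\frac{\int_0^T g^N(t)\,\rmd t}{\left(\int_0^T g^2(t)\,\rmd t\right)^{N/2}},\qquad N\ge 3,
\]
(together with $\kappa_1=0$ and $\kappa_2\equiv 1$) established above for $f(t,x)=g(t)h(x)$, specialised to $g(t)=\rme^{t^\al}$, and then to feed the resulting asymptotics into the method of moments and cumulants of \cite[\S A.3]{PN12}.

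For \textnormal{(i)} I would simply observe that for $g(t)=\rme^{t^\al}$ the limit in condition \eqref{eq:con.clt.prod} is exactly the ratio treated in Lemma \ref{lem:exp.al}. By the case $\al<1$ of that lemma this ratio tends to $0$ for every $N\ge 3$, so $\kappa_N(\widetilde I_1(f)_T)\to 0$ for all $N\ge 3$ while $\kappa_2\equiv 1$ and $\kappa_1\equiv 0$; these are precisely the cumulants of the standard normal law. Thus \eqref{eq:con.clt.prod} holds, and the convergence $\widetilde I_1(f)_T\to \Nscr(0,1)$ in law is the conclusion already drawn from \eqref{eq:con.clt.prod} via \cite[Theorem 1]{Ja88} (here one uses that $\Nscr(0,1)$ is determined by its moments). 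Part \textnormal{(i)} is therefore genuinely immediate.

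For \textnormal{(ii)} the starting point is again Lemma \ref{lem:exp.al}, now in the regime $\al>1$, where the same ratio diverges to $+\infty$. Fixing any even $N\ge 4$ we have $\nu(h^N)=\int_{\Rbb_0}h^N\,\rmd\nu>0$ in the nondegenerate case $h\not\equiv 0$ on the support of $\nu$, whence $\kappa_N(\widetilde I_1(f)_T)\to+\infty$. I would then argue by contradiction: assuming $\widetilde I_1(f)_T\to X$ in law with the law of $X$ determined by its moments, the method-of-moments framework would force the moments $\EE\,\widetilde I_1(f)_T^{\,k}$ to converge to the finite moments of $X$ for every $k$; passing through the polynomial moment–cumulant relations of \cite[\S A.3]{PN12}, the cumulants would then converge to the finite cumulants of $X$, contradicting $\kappa_N(\widetilde I_1(f)_T)\to+\infty$.

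The delicate point — and the one I expect to be the main obstacle — is precisely the italicised implication: convergence in law does \emph{not} in general entail convergence of moments, since moment mass can escape along the sequence (here the diverging higher cumulants are carried by increasingly rare, increasingly large jumps near $t=T$, and the second moment stays normalised to $1$). To close the argument one must therefore supply the missing converse of the method of moments: either a uniform-integrability/tightness-of-moments bound on $\widetilde I_1(f)_T^{\,k}$ compatible with a moment-determinate limit, or an analysis of the infinitely divisible structure of $\widetilde I_1(f)_T$ that controls the large-jump contribution directly. Ruling out the coexistence of a moment-determinate weak limit with blowing-up cumulants is the crux of part \textnormal{(ii)}; everything else reduces to the cumulant bookkeeping above combined with Lemma \ref{lem:exp.al}.
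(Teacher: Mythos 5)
Your treatment of part (i) is correct and coincides with the paper's: Lemma \ref{lem:exp.al} verifies condition \eqref{eq:con.clt.prod} for $\al<1$, and convergence to $\Nscr(0,1)$ then follows from \cite[Theorem 1]{Ja88} exactly as recorded after \eqref{eq:clt}; nothing more is needed there.

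For part (ii), the step you single out as the crux --- that convergence in law to a moment-determinate limit would force convergence of moments/cumulants --- is indeed the whole content of the claim, and your reluctance to assume it is justified in the strongest sense: that implication is false, and it cannot be patched here, because the divergence of the cumulants established in Lemma \ref{lem:exp.al} is perfectly compatible with weak convergence to a moment-determinate law. In fact, the large-jump analysis you propose in your closing paragraph, carried to the end, \emph{disproves} statement (ii) rather than proving it. Take $\sig=0$, $\nu=\delta_1$, $h=\one_{\{1\}}$ (so $\|h\|_{L^2(\rho)}=1$ and $I_1(f)_T=\int_0^T \rme^{t^{\al}}\,\rmd\tilde{\mathcal{N}}_t$ for a standard Poisson process $\mathcal{N}$), and fix $\al>1$. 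With $c_T^2:=\int_0^T\rme^{2t^{\al}}\rmd t\sim \rme^{2T^{\al}}/(2\al T^{\al-1})$ one checks three things: the compensator term satisfies $\int_0^T\rme^{t^{\al}}\rmd t\,/\,c_T\sim\sqrt{2/(\al T^{\al-1})}\to0$; the expected normalised contribution of all jumps in $[0,T-s_T]$ with $s_T:=T^{1-\al}\ln T$ is $\int_0^{T-s_T}\rme^{t^{\al}}\rmd t\,/\,c_T=O\bigl(T^{(1-\al)/2}\bigr)\to0$; and the probability that any jump occurs in $(T-s_T,T]$ is $1-\rme^{-s_T}\le s_T\to0$. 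Hence $\widetilde I_1(f)_T\to0$ in probability, i.e.\ in law to $\delta_0$, which \emph{is} determined by its moments (any law with vanishing second moment is $\delta_0$), even though $\kappa_N(\widetilde I_1(f)_T)\to+\infty$ for $N\geq3$ and the variance stays equal to $1$. The moment mass escapes exactly as you anticipated: jumps of normalised size $\approx\sqrt{2\al}\,T^{(\al-1)/2}$ occurring with probability $\approx T^{1-\al}$. If a Gaussian component is present, the same argument gives $\widetilde I_1(f)_T\to\Nscr(0,\sig^2h(0)^2)$ in law, again a moment-determinate limit.

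So the gap you identified is genuine and, moreover, unfillable: no uniform-integrability bound can exist, and part (ii) as literally stated is false. What does follow from Lemma \ref{lem:exp.al} is only the weaker assertion that the moments of $\widetilde I_1(f)_T$ diverge, so that no limit can be \emph{identified} by the method of moments. You should also know that the paper itself offers no proof beyond declaring the theorem ``an immediate consequence of Lemma \ref{lem:exp.al}'', i.e.\ it silently invokes precisely the false converse of the method of moments that you declined to use; on this point your blind attempt is more careful than the paper.
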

In the next proposition  we discuss the asymptotic behaviour of the stochastic integral in Theorem \ref{thm:clt} for $\al=1$.  We denote the Lebesgue measure on $\Rbb$  and on $\Rbb_+$ by $\lm$  and $\lm_+$, respectively.

\begin{proposition}\label{prop:clt.al1} 
Let  $\Pi$ be a Poisson random measure on $\Rbb_+\times\Rbb\times\Rbb_0$ with the compensator $\lm_+\otimes\lm\otimes\nu$ and let the function $h\geq0$ be such that $\nu(h\p2)=1$ and $\nu(\rme\p{u h})<+\infty$ for every $u\leq\varepsilon$, for a fixed $\varepsilon>0$.  We set ${a(x,y)}:=h(x)2\p{y/2}$, $E:=[0,\frac12\ln2]\times(-\infty,1]\times\Rbb_0$ and define 
$$Y:=\int_E{a(x,y)}(\Pi-\lm_+\otimes\lm \otimes\nu){ (\rmd t,\rmd y,\rmd x)}.$$

 We then have:

\textnormal{(i)} $\Ebb[\rme\p {u Y}]<+\infty$, for every $u\leq\frac{\varepsilon}{\sqrt2}$. Hence the distribution $\Pbb_Y$ of $Y$ is determined by its moments.

\textnormal{(ii)} The sequence of the cumulants of $Y$ is given by $k\p Y_1=0$ and $k_N\p Y=\nu(h\p N)\frac{2\p{N/2}}N$, $N\geq2$.

\textnormal {(iii)} Let $f(t,x)=h(x)\rme\p{t}$. Then $\widetilde I_1(f)_T$ converges in distribution to $Y$ as $T\rightarrow+\infty$.
\end{proposition}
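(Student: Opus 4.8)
The plan is to base everything on the L\'evy--Khintchine (exponential) formula for the infinitely divisible law of $Y$, and then to identify the limit in (iii) by the method of moments. Writing $a(x,y)=h(x)2^{y/2}\ge0$ and $\mu:=\lm_+\otimes\lm\otimes\nu$, the variable $Y=\int_E a\,\rmd(\Pi-\mu)$ has moment generating function
\[
\EE[\rme^{uY}]=\exp\Big(\int_E\big(\rme^{u a(x,y)}-1-u a(x,y)\big)\,\rmd t\,\rmd y\,\nu(\rmd x)\Big)
\]
whenever the right-hand integral is finite. For (i) I would verify this finiteness for every $u\le\ep/\sqrt2$ by integrating in $x$ first. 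For fixed $y\le1$ set $b:=u\,2^{y/2}$, so that $b\le\ep$ because $2^{y/2}\le\sqrt2$; then $\phi(b):=\int_{\Rbb_0}(\rme^{bh}-1-bh)\,\nu(\rmd x)$ is finite, being controlled near $h=0$ by $\tfrac{b^2}{2}\nu(h^2)=\tfrac{b^2}{2}$ and for large $h$ by $\nu(\rme^{\ep h})<\infty$. As $\phi$ is increasing with $\phi(\ep)<\infty$ and $\phi(b)=O(b^2)$ when $b\to0$, the remaining integral $\int_{-\infty}^1\phi(u\,2^{y/2})\,\rmd y$ is finite: near $y=1$ its integrand is bounded by the constant $\phi(\ep)$ on a bounded interval, while as $y\to-\infty$ it decays like $2^y$. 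The $t$-integration contributes only the finite factor $\tfrac12\ln2$, and for $u\le0$ the cruder estimate $\rme^{ua}-1-ua\le\tfrac12(ua)^2$ suffices. Hence $\EE[\rme^{uY}]<\infty$ on a neighbourhood of $0$, and so $\Pbb_Y$ is determined by its moments.

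For (ii) I would expand $\rme^{ua}-1-ua=\sum_{N\ge2}\tfrac{(ua)^N}{N!}$ and interchange sum and integral, which is legitimate by Tonelli's theorem since $\sum_{N\ge2}\tfrac{(|u|\sqrt2)^N}{N!}\nu(h^N)=\int_{\Rbb_0}(\rme^{|u|\sqrt2\,h}-1-|u|\sqrt2\,h)\,\nu(\rmd x)=\phi(|u|\sqrt2)<\infty$ for $|u|\le\ep/\sqrt2$. This yields $\log\EE[\rme^{uY}]=\sum_{N\ge2}\tfrac{u^N}{N!}\int_E a^N\,\rmd\mu$, so that $k_1^Y=0$ and $k_N^Y=\int_E a^N\,\rmd\mu$ for $N\ge2$. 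A direct computation of the product measure then finishes the job: $\int_0^{\frac12\ln2}\rmd t=\tfrac{\ln2}{2}$, $\int_{-\infty}^1 2^{Ny/2}\,\rmd y=\tfrac{2\cdot2^{N/2}}{N\ln2}$, and $\int_{\Rbb_0}h^N\,\nu(\rmd x)=\nu(h^N)$, whose product collapses to $k_N^Y=\tfrac{2^{N/2}}{N}\nu(h^N)$.

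For (iii) I would feed the cumulant formula for $\widetilde I_1(f)_T$ from Proposition \ref{Bell-prop}\,\eqref{cumulants-relation} into Lemma \ref{lem:exp.al} at $\al=1$. With $f(t,x)=h(x)\rme^{t}$ one has $\kappa_1(\widetilde I_1(f)_T)=0$, $\kappa_2(\widetilde I_1(f)_T)=1$, and for $N\ge3$
\[
\kappa_N(\widetilde I_1(f)_T)=\nu(h^N)\,\frac{\int_0^T\rme^{Nt}\,\rmd t}{\big(\int_0^T\rme^{2t}\,\rmd t\big)^{N/2}}\xrightarrow[T\to\infty]{}\nu(h^N)\,\frac{2^{N/2}}{N}=k_N^Y .
\]
Since $\nu(h^2)=1$ gives $k_2^Y=1$ as well, every cumulant of $\widetilde I_1(f)_T$ converges to the corresponding cumulant of $Y$ computed in (ii); equivalently, every moment converges. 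Because $\Pbb_Y$ is moment-determinate by (i), the method of moments (see \cite[\S A.3]{PN12}) then forces $\widetilde I_1(f)_T$ to converge in distribution to $Y$.

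The main obstacle I anticipate is the boundary case of (i): the bound must hold up to and including $u=\ep/\sqrt2$, where the naive estimate $\rme^{ua}-1-ua\le\tfrac12(ua)^2\rme^{ua}$ produces the factor $\nu(h^2\rme^{\ep h})$, which need not be finite under the stated hypotheses. Integrating in $x$ before $y$ circumvents this, since it replaces that pointwise bound by the finite increasing function $\phi$ evaluated only on $[0,\ep]$. Once (i) and (ii) are in place, the method-of-moments step in (iii) is routine given Lemma \ref{lem:exp.al}.
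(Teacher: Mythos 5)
Your proposal is correct and follows essentially the same route as the paper: both rest on the exponential formula for Poisson integrals, identify the cumulants $k_N^Y=\nu(h^N)\,2^{N/2}/N$ by expanding the exponent of the moment generating function as a power series in $u$ (the same Tonelli computation of the $t$-, $y$- and $x$-integrals), and deduce (iii) from (i), (ii) and Lemma \ref{lem:exp.al} by the method of moments. The only differences are technical details of (i): the paper first treats the uncompensated integral $Z=\int_E a\,\rmd\Pi$ and bounds its exponent termwise, using $2^{N/2}/N\le 2^{N/2}$, by $\nu(\rme^{\sqrt2 uh}-1)<\infty$ (which already covers the boundary value $u=\varepsilon/\sqrt2$ you were concerned about), whereas you work directly with the compensated integral $Y$ and secure finiteness by integrating in $x$ first and invoking monotonicity and the $O(b^2)$ behaviour of $\phi$; both verifications are valid.
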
 
\begin{proof}
Since the condition on the convergence of the cumulants is equivalent to the condition on the convergence of the moments, (iii) follows from (i), (ii) and Lemma \ref{lem:exp.al} by \cite[Theorem 30.2]{B08}. To see (i) and (ii), we introduce the nonnegative random variable $ Z:=\int_E{ a(x,y)} \Pi{(\rmd t,\rmd y,\rmd x)}$
and notice that, for every $0<u\leq\frac{\ep}{\sqrt2}$,  from  \cite[Theorem 3.9 and Exercise 3.4]{LP17}, we get 
\[
\begin{split}
\Ebb\big[\rme\p{u Z}\big]&=\exp\bigg(\int_0\p{\frac12\ln 2}\int_{-\infty}\p1\int_{\Rbb_0}(\rme\p{u a(x,y)}-1)\nu(\rmd x)\rmd y\rmd t\bigg)
\leq\exp\big(\nu(\rme\p{\sqrt 2uh}-1)\big)<+\infty,
\end{split}
\]
where, to get the first estimate, we expand the integrand as a power series.
This together with the definition of $Y$ yields
\[
+\infty>\Ebb\big[\rme\p{uY}\big]=
\exp\bigg(\sum_{N=2}\p{+\infty}\frac{u\p N}{N!}\nu(h\p N)\frac{2\p{N/2}}{N}\bigg),\quad 0<u\leq\frac{\ep}{\sqrt2}.
\]
Thus, (i) and (ii) are shown.
 \end{proof}

\section{The number of elements  in  \texorpdfstring{$D_N$}{Dssn}  if \texorpdfstring{$|l|=k$}{l=k}}\label{sec:DSSn}

Without handling the enumeration of appearing terms, it is not clear how to perform the calculations on a computer. The formulas below allow for recursive as well as iterative implementations.
Here we use again the notation  $l^o_{\{j\}}$    or $l^o_{ {\bf e}_n}$ for   $l^o_n$   if $ {\bf e}_n ={\{j\}}$.
Counting the number of contributing terms in \eqref{product-formula} for a given $|l|=k\leq m_1+\dotsb+m_N$, note that for any $ j\in \{1,\dotsc,N\}$ it holds $l^o_{\{j\}}=0$. This is  because of  definition \eqref{cen} (stating that the $c_{\{j\}}$ are zero). Thus, we will omit those components. \smallskip

We will tackle the problem in the subsections below in 3 ways, allowing a variety of approaches to calculate the number on a computer: by deriving a recursion formula, by using weak compositions, and by applying generating functions. Of course, it is also possible to count and study the set $D_N$ through other combinatorial approaches like numbering diagrams and multigraphs. This has been done in \cite{PT11} for the Brownian and pure-jump setting separately. The same program as there can be conducted for our case where the Brownian part and the jump part are  combined and is subject to  ongoing research. Studies in combinatorical details of this very direction ought to deliver further insights into the structure of integrands appearing in the  product formula.

\subsection{... by a recursion formula} 

\begin{proposition} 
We have the following recursion formula for 
 $$|\{ (l,l^o) \in   D_N:  |l|=k \text { and }  l^o_{\{1\}}=...=l^o_{\{N\}}= 0\}|=:  {\tt C}(k,N,(m_1,\dotsc,m_N,0,\dotsc)),$$
where 
\begin{align*}
 {\tt C}(k,\tilde{N},(m_1, \dotsc, m_N,0,...)):=0\quad\text{if }\tilde{N}\neq N,
\end{align*}
\equal \label{not-counted}  {\tt C}\big(k,N,(m_1,\dotsc,m_N,0,\dotsc)\big):=0 \quad  \text{ if } \exists k \in \{1,...,N\} \text{ with}  \quad m_k<0.
 \tionl
$$
 {\tt C}\big(0,0,(0,0,\dotsc)\big):=1,
$$ 

and
\begin{align*}
& {\tt C}\big(k,N,(m_1,\dotsc,m_N,0,\dotsc)\big) \\&=\!\!\!\sum_{\kappa=0}^{\min\{k,m_N\}}\!\!\!\sum_{\substack{l_{\{N\}}+\dotsb+l_{\{1,\dotsc,N\}}\\=\kappa}}\ \ 
\sum_{\substack{l^o_{\{1,N\}}+\dotsb+l^o_{\{1,\dotsc,N\}}\\=m_N-\kappa}} \!\!\! {\tt C}\biggl(k-\kappa,N-1,(\hat m_1(l,l^o),...,\hat m_{N-1}(l,l^o),0, \dotsc)\biggr) \\\
\end{align*}
where 
\begin{align*}
&\hat m_r(l,l^o)
:= m_r-\sum_{\substack{{\bf s}\in \mathcal{P}(\{1,\dotsc,N-1\})\\ r\in \{N\}\cup {\bf s}}}(l_{\{N\}\cup {\bf s}}+l^o_{\{N\}\cup {\bf s}}),
\end{align*}
and   $\mathcal{P}(\{1,\dotsc,N-1\})$ stands for the power set of $\{1,\dotsc,N-1\}.$
\end{proposition}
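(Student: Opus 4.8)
The plan is to establish the recursive identity by a direct counting argument that peels off the last index $N$, splitting every configuration $(l,l^o)\in D_N$ according to whether the underlying set ${\bf s}\in\cS$ contains $N$ or not. The nonempty subsets of $\{1,\dotsc,N\}$ fall into two disjoint classes: those containing $N$, which are exactly the sets $\{N\}\cup{\bf t}$ with ${\bf t}\subseteq\{1,\dotsc,N-1\}$ (there are $2^{N-1}$ of them), and those avoiding $N$, which are precisely the nonempty subsets of $\{1,\dotsc,N-1\}$. Thus giving the pair of vectors $(l,l^o)$ indexed by $\cS$ is the same as giving the values $(l_{\bf s},l^o_{\bf s})$ on the sets ${\bf s}\ni N$ together with the values $(l_{\bf s},l^o_{\bf s})$ on the nonempty ${\bf s}\subseteq\{1,\dotsc,N-1\}$.

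First I would translate the defining equalities of $D_N$ from \eqref{Dsn} through this splitting. The equality for $j=N$ involves only sets containing $N$, so it reads $\sum_{{\bf s}\ni N}(l_{\bf s}+l^o_{\bf s})=m_N$; together with the singleton condition $l^o_{\{N\}}=0$ enforced by \eqref{cen}, this is a self-contained requirement on the values attached to the sets containing $N$. For each $r\in\{1,\dotsc,N-1\}$, the sets containing $r$ split into those that also contain $N$ and those that do not, so the equality for $j=r$ becomes $\sum_{{\bf s}\ni r,\,N\in{\bf s}}(l_{\bf s}+l^o_{\bf s})+\sum_{{\bf s}\ni r,\,N\notin{\bf s}}(l_{\bf s}+l^o_{\bf s})=m_r$. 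Once the values on the sets containing $N$ are fixed, the values on the sets avoiding $N$ must therefore satisfy exactly the $D_{N-1}$-equalities for the reduced masses $\hat m_r(l,l^o)=m_r-\sum_{{\bf s}\ni r,\,N\in{\bf s}}(l_{\bf s}+l^o_{\bf s})$, which is the quantity in the statement, while the singleton conditions $l^o_{\{r\}}=0$ for $r<N$ pass over unchanged. This exhibits a bijection between the configurations counted on the left-hand side and the pairs consisting of an admissible assignment on the sets containing $N$ and a configuration counted by $\mathtt{C}(\,\cdot\,,N-1,(\hat m_1,\dotsc,\hat m_{N-1},0,\dotsc))$.

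Next I would incorporate the constraint $|l|=k$. Setting $\kappa:=\sum_{{\bf s}\ni N}l_{\bf s}$, the additivity of $|l|$ forces the values on the sets avoiding $N$ to carry total $l$-mass $k-\kappa$, which becomes the first argument of the recursive call. Since $\kappa\le\sum_{{\bf s}\ni N}(l_{\bf s}+l^o_{\bf s})=m_N$ and $\kappa\le|l|=k$, the index $\kappa$ runs over $0,\dotsc,\min\{k,m_N\}$. For fixed $\kappa$, the admissible assignments on the sets containing $N$ are enumerated by distributing $\kappa$ among the components $l_{\{N\}},\dotsc,l_{\{1,\dotsc,N\}}$ and the remaining mass $m_N-\kappa$ among the components $l^o_{\{1,N\}},\dotsc,l^o_{\{1,\dotsc,N\}}$, the singleton component $l^o_{\{N\}}$ being omitted because it must vanish; these are exactly the two inner sums of the asserted formula. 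Summing $\mathtt{C}(k-\kappa,N-1,\dotsc)$ over all these choices gives the recursion. The base case $\mathtt{C}(0,0,(0,\dotsc))=1$ is the empty configuration, and the vanishing conventions in \eqref{not-counted}, together with the rule that the second argument must match the length of the mass vector, absorb the degenerate branches in which some $\hat m_r<0$, i.e.\ where the mass already committed to sets containing $N$ exceeds the budget $m_r$ and admits no completion.

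The step I expect to be the main obstacle is making the correspondence of the second paragraph a genuine bijection with precisely the stated reduced masses: one must verify that every admissible assignment on the sets containing $N$, combined with every configuration counted by the recursive call, reassembles into a bona fide element of $D_N$ --- with all defining equalities of \eqref{Dsn} restored and all singleton $l^o$-components vanishing --- and, conversely, that distinct elements never collapse. The delicate points are the bookkeeping of the singleton $\{N\}$ against the singletons $\{r\}$ with $r<N$, and ensuring that negative reduced masses are consistently sent to the value $0$ via \eqref{not-counted}, so that the finite double sum on the right neither over- nor under-counts.
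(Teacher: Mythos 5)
Your proposal is correct and follows essentially the same route as the paper's own proof: both peel off the index $N$ by splitting $\cS$ into sets containing $N$ and sets avoiding $N$, introduce $\kappa=\sum_{{\bf s}\ni N}l_{\bf s}$ with the bounds $\kappa\le\min\{k,m_N\}$ and $\sum_{{\bf s}\ni N}(l_{\bf s}+l^o_{\bf s})=m_N$, reduce the remaining constraints to the $D_{N-1}$-conditions with the masses $\hat m_r(l,l^o)$ and total $l$-mass $k-\kappa$, and invoke the convention \eqref{not-counted} to discard over-committed assignments with $\hat m_r<0$. The bijectivity concern you flag at the end is handled in the paper exactly as implicitly as in your second paragraph, so no additional work is needed beyond what you have written.
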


\begin{proof}
The definition of  $D_N$ (see \eqref{Dsn}) requires (we use here $l_{{\bf e}_n}:=l_n$)
$$\sum_{n=1}^{\ssn} (l_{{\bf e}_n}+ l_{{\bf e}_n}^o) \one_{{\bf e}_n}(j) =m_j \quad  \text{for all} \quad j=1,...,N.$$
Ordering  the index set such that all ${\bf e}_n$ containing $N$ are behind the other sets that do not contain $N$, we  observe the tuples
$$(l_{\{1\}},\dotsc,l_{\{1,\dotsc,N-1\}},l_{\{N\}},\dotsc,l_{\{1,\dotsc,N\}}, l^o_{{\bf e}_{N+1}},\dotsc,l^o_{{\bf e}_{\ssn}}).$$
We put $\kappa:=|(l_{\{N\}},\dotsc,l_{\{1,\dotsc,N\}})|$ and know that 
$$\kappa+|(l^o_{\{1,N\}},\dotsc,l^o_{\{N-1,N\}}\dotsc,l^o_{\{1,\dotsc,N\}})|=m_N \quad \text{ and} \quad \kappa\leq\min\{k,m_N\}. $$
For every  choice  of $(l_{\{N\}},\dotsc,l_{\{1,\dotsc,N\}})$ and $(l^o_{\{1,N\}},\dotsc,l^o_{\{N-1,N\}},\dotsc,l^o_{\{1,\dotsc,N\}})$
the number of possible choices for the remaining  $(l_{\{1\}},\dotsc,l_{\{1,\dotsc,N-1\}},l^o_{\{1,2\}},\dotsc,l^o_{\{N-2,N-1\}},\dotsc,l^o_{\{1,\dotsc,N-1\}} )$  is determined by 
 \equa
 && \biggl(m_1-\!\!\sum_{\substack{{\bf s}\in \mathcal{P}(\{1,\dotsc,N-1\})\\ 1\in \{N\}\cup {\bf s}}}(l_{\{N\}\cup {\bf s}}+l^o_{\{N\}\cup {\bf s}}),\ \dotsc\ ,\ m_{N-1}-\sum_{\substack{{\bf s}\in \mathcal{P}(\{1,\dotsc,N-1\})\\ N-1\in \{N\}\cup {\bf s}}}(l_{\{N\}\cup {\bf s}}+l^o_{\{N\}\cup {\bf s}})\biggr) \\
 &=& (\hat m_1(l,l^o),..., \hat m_{N-1}(l,l^o))
 \tion 
 with $k$ replaced by  $k-\kappa$, which implies the claim.  Clearly, only those $(l,l^o)$ can be chosen which
 satisfy $\hat m_r(l,l^o) \ge 0$ for all $r=1,..., N-1$ because $\hat m_r(l,l^o)$ is the number of 'unused' variables of the kernel in the $r$-th factor of the product. Therefore we use \eqref{not-counted}.
\end{proof}

\bigskip

Concerning our special cases considered in Remark \ref{en} the following holds
\begin{rem} \label{numbers} 
\begin{enumerate}[(a)] 
\item \label{jump-number}  For the jump case $M(\rmd t, \rmd x) =  \tilde N(\rmd t,\rmd x)$  
the number of appearing terms is $ {\tt C}(k,N,(m_1,\ldots,m_N,0,...))$.
\item\label{cbrown-number} For the Brownian motion case  $M(\rmd t, \rmd x) = \sigma dW_t \delta_0(\rmd x)$ 
 the number of appearing possible nonzero terms is significantly less and can be -- in the same way as before -- recursively enumerated by the formula
\begin{align*}
& {\tt C}_W\big(k,N,(m_1,\dotsc,m_N,0,\dotsc)\big)=\\
&\sum_{\kappa=0}^{\min\{k,m_N\}}\!\!\!\!\! \sum_{\substack{l^o_{\{1,N\}}+\dotsb+l^o_{\{N-1,N\}}=m_N-\kappa\\ l^o_{\{i,j\}}\geq 0,\ i\neq j,}}\!\!\!\!\!\!\!\!\!\!\!\!\!\!\!\!  {\tt C}_W\big(k-\kappa,N-1,( m_1-l^o_{\{1,N\}},\ \dotsc\ ,\ m_{N-1}-l^o_{\{N-1,N\}},0,...)\big),
\end{align*}
setting $ {\tt C}_W$ to zero or one for all other cases, as before.
\end{enumerate}
\end{rem}

\bigskip 
\subsection{... by counting weak compositions}

To count those tuples without using a recursion, the following formula for tuples $(l_u)_{u \in U}$ where $U$ is  a finite set  and  $l_u \in \{0,1,\dotsc\}$, involving weak compositions is a help. A weak composition of $r \in \N \cup \{0\}$  into
$n \in \N$ parts  is any representation $r = a_1+...+a_n$ with $a_1,...,a_n \in  \N \cup \{0\}.$  If we  denote by
$\begin{pmatrix}\!\!\begin{pmatrix}r\\ n\end{pmatrix}\!\!\end{pmatrix}$ the number of weak compositions of $r$ into $n$ parts, it holds that $\begin{pmatrix}\!\!\begin{pmatrix}r\\ n\end{pmatrix}\!\!\end{pmatrix}=\dbinom{r+n-1}{n-1}$.
For example, the weak compositions of $r=5$ into  $n=2$ parts are given by
$$0+5,\,\,\, 5+0, \,\,\, 1+4, \,\,\,4+1, \,\,\, 2+3, \,\,\,3+2. $$
\newpage
\begin{lemma}\label{lem:tupleenum}
Let $A_1,\dotsc,A_N$ be subsets of a finite set $U$ such that $\bigcup_{i=1}^N A_i=U$, $A_j\cap\bigcap_{\substack{i=1\\ i\neq j}}^NA_i^c\neq \emptyset$, for all $j=1,\dotsc,N$, and let numbers  $m_1,\dotsc,m_N \in \N$ be given.
For ${\bf t}=(t_1,...,t_N) \in \{0,1\}^N$ we set
\equa A_{(t_1,...,t_N)} &:=&\left (\bigcap_{i=1, t_i=1}^N    A_i \right )  \cap  \left (\bigcap_{j=1, t_j=0}^N    A_j^c \right )\\
 \min_{\bf t} \, m &:=& \min\{ m_i: t_i =1, i=1,...,N\}.
 \tion

Let $\mathfrak{n}\colon\{0,1\}^N\setminus\{(0,\dotsc,0)\}\leftrightarrow\{1,\dotsc,2^N\}$ be a bijection such that $$\mathfrak{n}\big((1,\dotsc,1)\big)=1, \,\,\, \mathfrak{n}\big((0,\dotsc,0,1)\big)=2^N$$ and $\mathfrak{n({\bf t})<\mathfrak{n}({\bf s})}$ whenever ${\bf s}>{\bf t}$ (where by ${\bf s}>{\bf t}$ we mean that   $s_i \ge t_i$ for all $i=1,...,N,$ and there exists at least one $i_0$ such that 
 $s_{i_0}=1$ and  $t_{i_0} =0$).

For shortness, with  a slight abuse of notation we identify $q_{\mathfrak{n}({\bf t})}$ and $q_{\bf t}$ (so e.g. $q_{(1,\dotsc,1)}=q_1$), and we abbreviate,
\begin{align*}
\min_{\bf t} \, m - q_{>} := \min\bigg\{ m_i-\sum_{\substack{\iota=1\\\mathfrak{n}^{-1}(\iota)(i)=1}}^{\mathfrak{n}({\bf t})-1}q_{\iota}: \, t_i =1, i=1,...,N\bigg\}.
\end{align*}
Then the cardinality of
\begin{align*}
\biggl\{(l_{u})_{u\in U}: \sum_{a\in A_i}l_ a=m_i, \, i=1,\dotsc,N\biggr\}
\end{align*}
is given by 
 \begin{align*}
&\sum_{q_1=0}^{\min_{\mathfrak{n}^{-1}(1)} m -q_>} \begin{pmatrix}\!\!\begin{pmatrix}q_1\\ |A_{\mathfrak{n}^{-1}(1)}|\end{pmatrix}\!\!\end{pmatrix}\cdots\sum_{q_j=0}^{\min_{\mathfrak{n}^{-1}(j)} m -q_>} \begin{pmatrix}\!\!\begin{pmatrix}q_j\\ |A_{\mathfrak{n}^{-1}(j)}|\end{pmatrix}\!\!\end{pmatrix}\cdots\\
&\qquad\cdots\sum_{q_{2^N-N}=0}^{\min_{\mathfrak{n}^{-1}(2^N-N)} m -q_>} \begin{pmatrix}\!\!\begin{pmatrix}q_{2^N-N}\\ |A_{\mathfrak{n}^{-1}(2^N-N)}|\end{pmatrix}\!\!\end{pmatrix}\cdot\prod_{|{\bf t}|=1}\begin{pmatrix}\!\!\begin{pmatrix}\min_{\bf t} m-q_>\\ |A_{\bf t}|\end{pmatrix}\!\!\end{pmatrix},
\end{align*}
and whenever a set $A_{\bf s}$ is empty, the according summation does not appear in the above formula and $q_{\bf s}$ is then set to zero.
\end{lemma}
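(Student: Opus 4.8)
The plan is to read the count as a stars-and-bars problem over the Boolean partition of $U$ generated by $A_1,\dots,A_N$, and then to reorganize the resulting multiple sum by sweeping the $2^N-1$ nonzero patterns in the order prescribed by $\mathfrak{n}$. First I would record the structure of the atoms $A_{\bf t}$: they are pairwise disjoint, and the covering hypothesis $\bigcup_{i=1}^N A_i=U$ forces the empty-pattern atom $A_{(0,\dots,0)}=U\setminus\bigcup_i A_i$ to be empty, so $\{A_{\bf t}:{\bf t}\neq(0,\dots,0)\}$ is a partition of $U$. The one identity that drives everything is $A_i=\bigsqcup_{{\bf t}:t_i=1}A_{\bf t}$.

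Next I would pass from the variables $(l_u)_{u\in U}$ to the atom-sums $q_{\bf t}:=\sum_{u\in A_{\bf t}}l_u$. By the displayed identity, $\sum_{a\in A_i}l_a=\sum_{{\bf t}:t_i=1}q_{\bf t}$, so the system $\sum_{a\in A_i}l_a=m_i$ is equivalent to $\sum_{{\bf t}:t_i=1}q_{\bf t}=m_i$ for every $i$. For a fixed admissible family $(q_{\bf t})$, the choices of $(l_u)$ decouple atom by atom, and on $A_{\bf t}$ the number of $(l_u)_{u\in A_{\bf t}}$ with sum $q_{\bf t}$ is precisely the number of weak compositions of $q_{\bf t}$ into $|A_{\bf t}|$ parts, namely $\binom{q_{\bf t}+|A_{\bf t}|-1}{|A_{\bf t}|-1}$. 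Hence the cardinality in question equals $\sum_{(q_{\bf t})\text{ admissible}}\prod_{{\bf t}\neq(0,\dots,0)}\binom{q_{\bf t}+|A_{\bf t}|-1}{|A_{\bf t}|-1}$, and an empty atom simply forces $q_{\bf t}=0$ with a trivial factor, which is why its summation is dropped.

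The remaining—and most delicate—step is to evaluate the sum over admissible $(q_{\bf t})$ as the stated iterated sum, which I would do by sweeping the patterns in increasing $\mathfrak{n}$-label. Here one uses that $\mathfrak{n}$ refines the reverse of the dominance order, so that the top pattern $(1,\dots,1)$ is treated first and the singletons are treated last. When a non-singleton ${\bf t}$ is reached, the earlier-labelled values already occupy, for each coordinate $i$ with $t_i=1$, the amount $q_>$ (the sum over earlier patterns containing $i$) of the budget $m_i$; the only restriction is that no partial coordinate-sum exceed its target, i.e.\ $0\le q_{\bf t}\le\min_{i:t_i=1}(m_i-q_>)=\min_{\bf t}m-q_>$, matching the summation ranges in the statement. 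The crucial point is that each singleton $\{j\}$ is minimal and every pattern meeting coordinate $j$ dominates it, hence is labelled earlier; therefore $q_{\{j\}}$ is not a free summation variable but is forced to the leftover $m_j-q_>=\min_{\{j\}}m-q_>$, contributing the factor $\binom{\min_{\{j\}}m-q_>+|A_{\{j\}}|-1}{|A_{\{j\}}|-1}$ that appears in the final product over $|{\bf t}|=1$. The hypothesis $A_j\cap\bigcap_{i\neq j}A_i^c\neq\emptyset$ ensures $|A_{\{j\}}|\ge1$, so this forced value is always realizable.

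To finish I would verify that this sweep is a bijection onto the admissible families: the caps $\min_{\bf t}m-q_>$ keep every partial coordinate-sum $\le m_i$, so each forced singleton value is $\ge0$ and the total for each $i$ equals $m_i$ exactly; conversely every admissible $(q_{\bf t})$ arises from a unique sequence of choices, since $q\ge0$ makes all partial sums automatically bounded by their targets. The main obstacle is precisely this bookkeeping—checking that the $\mathfrak{n}$-order makes each budget $\min_{\bf t}m-q_>$ depend only on already-chosen $q_\iota$, that singletons are genuinely last among the patterns touching their coordinate, and that no coordinate constraint is over- or under-counted. Once the ordering is pinned down, the identity is a direct consequence of the stars-and-bars factorization of the preceding paragraph.
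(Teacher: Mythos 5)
Your proof is correct and follows essentially the same route as the paper's: partition $U$ into the Boolean atoms $A_{\bf t}$, count the tuples on each atom by weak compositions, and sweep the atoms in the $\mathfrak{n}$-order so that each non-singleton value is free up to the cap $\min_{\bf t} m - q_{>}$ while each singleton value is forced to the leftover budget. Your two-step organization (stars-and-bars factorization over atoms, then the bijection between admissible families $(q_{\bf t})$ and sequences of capped choices) makes explicit the nested-summation argument that the paper presents procedurally, but the underlying idea is identical.
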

\begin{proof}
The formula can be seen by partitioning $U$ into all possible pieces emerging from intersecting the sets $A_1,\dotsc,A_N$.  Provided that $ \bigcap_{i=1}^N A_i\neq \emptyset$, one summand contained in each of the sums  $\sum_{a\in A_i}l_a$  is 
$\sum_{a\in \bigcap_{i=1}^N A_i}l_a= \sum_{a\in  A_{(1,...,1)}}l_a     = q_{(1,...,1)}=q_1$ (an intersection with all sets involved).
The possibilities for $q_1$ are $0,...,\min_{(1,...,1)} m$ (which equals $\min_{(1,...,1)} m -q_>$), and the number of tuples $(l_a)_{a\in A_{(1,...,1)}}$ that sum up to $q_1$ are given by
$\begin{pmatrix}\!\!\begin{pmatrix}q_1\\ |A_{(1,...,1)}|\end{pmatrix}\!\!\end{pmatrix}=\begin{pmatrix}\!\!\begin{pmatrix}q_1\\ |A_{\mathfrak{n}^{-1}(1)}|\end{pmatrix}\!\!\end{pmatrix}.$  
For the second sum,  as we already decided for a part of that sum to be $q_1$, for $q_2=q_{(1,\dotsc,1,0,1,\dotsc,1)}$  there are possibilities from $0$ to $\min_{(1,\dotsc,1,0,1,\dotsc,1)} m -q_1$, which equals $\min_{(1,\dotsc,1,0,1,\dotsc,1)} m -q_>$ again. For $q_3$ and the according tuple $\mathfrak{n}^{-1}(3)$, we have the upper limit $\min\{m_i-q_1-q_2\cdot\one_{ \{\mathfrak{n}^{-1}(2)(i) =1\}} : \mathfrak{n}^{-1}(3)(i)=1,\,  i=1,\dotsc,N\}$,  which is $\min_{\mathfrak{n}^{-1}(3)} m -q_>$, and the number of tuples is given by $\begin{pmatrix}\!\!\begin{pmatrix}q_3\\ |A_{\mathfrak{n}^{-1}(3)}|\end{pmatrix}\!\!\end{pmatrix}$.  We proceed nesting the sums until we reach the number  $2^N-N$, which means that from thereon, the corresponding tuples $\mathfrak{n}^{-1}(j), \,  j>2^N-N,$ have only one nonzero element.

For those last $q_{2^N-N+l}, l=1,\dotsc,N$, and the according tuples, there are no further subsets of our partition left. Taking, without loss of generality, the tuple $(1,0,\dotsc,0)$, the remaining difference of the overall sum of the $q$'s to $m_1$, which is 
$$m_1-\sum_{\substack{\mathfrak{n}({\bf s})< \mathfrak{n}((1,0,...,0))\\{ {\bf s}_1=1}}}q_{\bf s}=\min_{(1,0,\dotsc,0)}m -q_>,$$ needs only to be split up between the components of $A_{(1,0,\dotsc,0)}$, so no further summation appears in this case.

Also, for all other sets of type $A_1^c\cap\dotsb\cap A_{j-1}^c\cap A_j\cap A_{j+1}^c\cap\dotsb A_N^c$, no summation appears since there is then only one possible choice for 
 $$q_{(0,\dotsc,0,\underbrace{1}_{j\text{-th component}},0,\dotsc,0)},$$ namely $m_j-\sum_{\substack{\mathfrak{n}({\bf s}) <\mathfrak{n}((0,\dotsc,0,1,0,\dotsc,0))\\ {\bf s}_j=1}} q_{\bf s}$.
\end{proof}

In our situation of multiple integrals, to use the above formula e.g. to count the admissible tuples $(l^o_{{\bf e}_{N+1}},\dotsc,l_{{\bf e}_{s_N}}^o)$, our notation corresponds to  setting in the above Lemma $U:=\{B\subseteq\{1,\dotsc,N\}: |B|>1\}$ and $A_i:=\{B\subseteq \{1,\dotsc,N\}: |B|>1, i\in B\}$ for $i=1,\dotsc,N$. Then, 
$\bigl|\bigcap_{i=1, t_i=1}^NA_i\cap \bigcap_{j=1, t_j=0}^NA_j^c\bigr|=1$ 
for all {\bf t}, because $\bigcap_{i=1, t_i=1}^NA_i\cap \bigcap_{j=1, t_j=0}^NA_j^c$ contains exactly one set, namely $\{i:t_i=1\},$ and the formula reduces to
\begin{align} \label{hat-M}
& \hat{\tt C}(m_1,\dotsc,m_N):=\Big |\Big\{(l^o_{{\bf e}_{N+1}},\dotsc,l^o_{{\bf e}_{s_N}}): \forall i\in \{1,\dotsc,N\}:\sum_{{\bf e}_j\ni i} l^o_{{\bf e}_j} =m_i\Big\} \Big |\\
&=\sum_{q_1=0}^{\min_{(1,...,1)} m}1\dotsm
\sum_{q_j=0}^{\min_{\mathfrak{n}^{-1}(j)} m -q_{_{>}}}1 \dotsm  \sum_{q_{2^N-N}=0}^{\min_{\mathfrak{n}^{-1}(2^N-N)} m -q_{_{>}}} 1. \notag
\end{align}

To count the admissible tuples $(l_{{\bf e}_1},\dotsc,l_{{\bf e}_{s_N}})$, set $A_i:=\{B\subseteq \{1,\dotsc,N\}:i\in B\}$ for $i=1,\dotsc,N$. We have the additional condition that $\sum_{i=1}^{s_N}l_{{\bf e}_i}=k$. Therefore, we have to slightly change the above formula, in the way, that for the last factor there is only one possibility if this condition is met, and none otherwise. Also, the minima in the upper sum limits must be bounded by terms involving $k$. Hence, we obtain

\begin{align} \label{hajek-M}
& \check{\tt C}(k; m_1,\dotsc,m_N):=\Big | \Big\{(l_{{\bf e}_{1}},\dotsc,l_{{\bf e}_{\sn}}): \sum_{i=1}^{s_N}l_{{\bf e}_i}=k \,\text{ and } \,\forall i\in \{1,\dotsc,N\}:\sum_{{\bf e}_j\ni i} l_{{\bf e}_j}=m_i\Big\} \Big |\\
&=\sum_{q_1=0}^{\min_{(1,...,1)} m \wedge k}1\cdot\sum_{q_2=0}^{(\min_{\mathfrak{n}^{-1}(2)} m -q_1)\wedge(k-q_1)} 1\cdot
\dotsm  
\sum_{q_j=0}^{(\min_{\mathfrak{n}^{-1}(j)} m -q_{_{>}})\wedge(k-\sum_{\iota=1}^{j-1}q_\iota)} 1 \dotsm   \notag\\
& \hspace{1em} \dotsm\sum_{q_{2^N-N}=0}^{(\min_{\mathfrak{n}^{-1}(2^N-N)} m -q_{_{>}})\wedge(k-\sum_{\iota=1}^{2^N-N-1}q_\iota)}\one_{\biggl\{m_1-q_{_{>}}=k-\sum_{\iota=1}^{2^N-(N-1)}q_\iota\biggr\}}\dotsm \one_{\biggl\{m_N-q_{_{>}}=k-\sum_{\iota=1}^{2^N}q_\iota\biggr\}}. \notag
\end{align}

To count then the elements in $D_N$ with $|l|=k$ we have the following corollary which follows directly from the definition  of $D_N$:

 \begin{corollary}[the general case]\label{cor:general-number-M}  Let $\hat{ {\tt C}}(m_1,\dotsc,m_N)$ be the number of tuples $(l^o_{{\bf e}_{N+1}},\dotsc,l^o_{{\bf e}_{s_N}})$ given by \eqref{hat-M}, and $\check{ {\tt C}}(k; m_1,\dotsc,m_N)$  the  number of tuples $(l_{{\bf e}_{1}},\dotsc,l_{{\bf e}_{s_N}})$ given by \eqref{hajek-M}, then,
 \begin{align*}
 & |\{ (l,l^o) \in   D_N:  |l|=k \text { and }  l^o_{\{1\}}= ...=  l^o_{\{N\}}=0\}|   \\
&= {\tt C}(k,N,(m_1,\dotsc,m_N,0,\dotsc))\\
&=\sum_{(\hat{m}_1,\dotsc,\hat{m}_N)+(\check{m}_1,\dotsc,\check{m}_N)=(m_1,\dotsc,m_N)}\!\!\!\!\!\!\!\!\!\!\!\!\!\!\!\!  \hat{\tt C}(\hat{m}_1,\dotsc,\hat{m}_N)   \check{\tt C}(k; \check{m}_1,\dotsc,\check{m}_N).
 \end{align*}

\end{corollary}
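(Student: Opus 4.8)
The plan is to derive the formula by splitting each admissible tuple $(l,l^o)\in D_N$ into its two constituent parts: the \emph{identification} vector $l=(l_{{\bf e}_1},\dotsc,l_{{\bf e}_{s_N}})$ and the \emph{contraction} vector $l^o=(l^o_{{\bf e}_{N+1}},\dotsc,l^o_{{\bf e}_{s_N}})$, recalling that by \eqref{cen} the singleton components $l^o_{\{1\}}=\dotsb=l^o_{\{N\}}=0$ are forced and hence omitted. First I would observe that the single defining constraint of $D_N$, namely $\sum_{n=1}^{s_N}(l_{{\bf e}_n}+l^o_{{\bf e}_n})\one_{{\bf e}_n}(j)=m_j$ for every $j=1,\dotsc,N$, decouples additively in the following sense: if I set $\check m_j:=\sum_{{\bf e}_n\ni j}l_{{\bf e}_n}$ (the mass contributed to coordinate $j$ by the identification vector) and $\hat m_j:=\sum_{{\bf e}_n\ni j}l^o_{{\bf e}_n}$ (the mass contributed by the contraction vector), then the constraint is exactly $\check m_j+\hat m_j=m_j$ for all $j$.

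The key step is therefore to partition the counting over all ways of distributing the total mass vector $(m_1,\dotsc,m_N)$ as a sum $(\check m_1,\dotsc,\check m_N)+(\hat m_1,\dotsc,\hat m_N)$ of nonnegative integer vectors. Once such a split is fixed, the choices of $l$ and of $l^o$ become \emph{independent}: the vector $l$ must satisfy $\sum_{{\bf e}_j\ni i}l_{{\bf e}_i}=\check m_i$ for each $i$ together with the global constraint $\sum_{i=1}^{s_N}l_{{\bf e}_i}=k$, and the number of such $l$ is precisely $\check{\tt C}(k;\check m_1,\dotsc,\check m_N)$ as given in \eqref{hajek-M}; independently, the vector $l^o$ must satisfy $\sum_{{\bf e}_j\ni i}l^o_{{\bf e}_j}=\hat m_i$ for each $i$, and the number of such $l^o$ is exactly $\hat{\tt C}(\hat m_1,\dotsc,\hat m_N)$ from \eqref{hat-M} (here the singleton sets are excluded from $U$, consistent with the forced vanishing of the $l^o_{\{j\}}$). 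Since each admissible $(l,l^o)$ arises from a unique split and a unique independent pair of choices, multiplying the two counts and summing over all splits yields the claimed identity by the elementary product-and-sum rule of counting.

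The main obstacle I expect is \emph{bookkeeping rather than conceptual}: verifying carefully that the index set $U$ and the subsets $A_i$ used when invoking Lemma \ref{lem:tupleenum} for the two pieces are the correct ones, and that the hypotheses of that lemma (in particular $A_j\cap\bigcap_{i\neq j}A_i^c\neq\emptyset$ and the reduction observed for this choice of $U$, giving $|A_{\bf t}|=1$ in the contraction case) are genuinely met. One must check that the $l$-count correctly incorporates the extra global restriction $|l|=k$ — which is why $\check{\tt C}$ carries the additional indicator factors and the $\wedge k$ truncations in its summation limits, whereas $\hat{\tt C}$ does not — so that the two enumerations are not conflated. Once it is confirmed that these two restricted enumerations are exactly $\check{\tt C}$ and $\hat{\tt C}$ and that they apply to disjoint, independently chosen groups of components indexed by the same family $\{{\bf e}_n\}$, the corollary follows directly from the definition of $D_N$, as asserted.
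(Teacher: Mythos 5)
Your proposal is correct and is essentially the paper's own argument: the paper simply remarks that the corollary ``follows directly from the definition of $D_N$,'' and your decomposition of the constraint into $\check m_j+\hat m_j=m_j$ with the resulting independence of the choices counted by $\check{\tt C}(k;\check m_1,\dotsc,\check m_N)$ and $\hat{\tt C}(\hat m_1,\dotsc,\hat m_N)$ is precisely the intended elaboration. Your bookkeeping remarks about Lemma \ref{lem:tupleenum} are not needed here, since \eqref{hat-M} and \eqref{hajek-M} already define the two counts being multiplied.
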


For the Brownian motion case  $M(\rmd t, \rmd x) = \sigma dW_t \delta_0(\rmd x)$  the formula simplifies:

\begin{corollary}
If we consider multiple integrals w.r.t.~the Brownian motion,  
\begin{align*}  &\hat{ {\tt C}}_W(m_1,\dotsc,m_N) \\ &:= \Big | \Big\{(l^o_{{\bf e}_{N+1}},\dotsc,l^o_{{\bf e}_{s_N}}): \forall i\in \{1,\dotsc,N\}:\sum_{{\bf e}_j\ni i} l^o_{{\bf e}_j} =m_i\Big\} \Big |\\
&=\sum_{q_{12}=0}^{m_1\wedge m_2}1\cdot\sum_{q_{13}=0}^{(m_1-q_{12})\wedge m_3}1
\cdots\sum_{q_{N-1\,N}=0}^{(m_{N-1}-q_{1\,N-1}-\dotsb-q_{N-2\,N-1})\wedge (m_N-q_{1\,N}-\dotsb -q_{N-2\,N})}1. \notag
\end{align*}
Further,
\begin{align*}
&\check{ {\tt C}}_W(k; m_1,\dotsc,m_N):=\Big |\Big\{(l_{{\bf e}_{1}},\dotsc,l_{{\bf e}_{N}}): \forall i\in \{1,\dotsc,N\}:\sum_{{\bf e}_j\ni i} l_{{\bf e}_j} =m_i, \sum_{j=1}^{s_N}l_{{\bf e}_j} =k\Big\} \Big |\\
&=\one_{\{m_1+\dotsb+m_N=k\}}.
\end{align*}
Then, following the above Corollary \ref{cor:general-number-M},
$$ {\tt C}_W(k,N,(m_1,\dotsc,m_N,0,\dotsc))=\Big | \{ (l,l^o) \in   D_N:  |l|=k \text { and } l^o_{\{j\}}= 0, j=1,...,N\} \Big |$$ can be calculated by
$$\sum_{\substack{(\hat{m}_1,\dotsc,\hat{m}_N)+(\check{m}_1,\dotsc,\check{m}_N)=(m_1,\dotsc,m_N)\\ \check{m}_1+\dotsb+\check{m}_N=k}}\!\!\!\!\!\!\!\!\!\!\!\!\!\!\!\!\hat{ {\tt C}}(\hat{m}_1,\dotsc,\hat{m}_N).$$
 \end{corollary}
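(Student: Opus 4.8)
The plan is to read the admissible multi-indices off the Brownian coefficient \eqref{e:cBrown} and then to feed the resulting restricted index set into the general enumeration machinery of Lemma \ref{lem:tupleenum} and Corollary \ref{cor:general-number-M}. The decisive first observation is that in the Brownian case \eqref{e:cBrown} makes the product $\prod_n c_{{\bf e}_n}$ nonzero only for those $(l,l^o)\in D_N$ whose identification mass sits on the singletons and whose contraction mass sits on the pairs: one must have $l^o_{\{j\}}=0$ for every singleton, $l_{{\bf e}_n}=0$ for every ${\bf e}_n$ with $|{\bf e}_n|\ge 2$, and $l_{{\bf e}_n}=l^o_{{\bf e}_n}=0$ for every ${\bf e}_n$ with $|{\bf e}_n|\ge 3$. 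Hence for a contributing tuple the degree equation $\sum_{{\bf e}_n\ni i}(l_{{\bf e}_n}+l^o_{{\bf e}_n})=m_i$ splits as $m_i=\check m_i+\hat m_i$, where $\check m_i:=l_{\{i\}}$ is the singleton (\emph{random}) part and $\hat m_i:=\sum_{j\neq i}l^o_{\{i,j\}}$ is the pair (\emph{deterministic}) part.

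First I would treat the random factor $\check{\tt C}_W$. Because $l$ is supported on singletons, the constraint $\sum_{{\bf e}_j\ni i}l_{{\bf e}_j}=\check m_i$ collapses to $l_{\{i\}}=\check m_i$, so the tuple is uniquely determined, and the side condition $\sum_j l_{{\bf e}_j}=k$ becomes $\sum_i\check m_i=k$. This immediately gives $\check{\tt C}_W(k;\check m_1,\dots,\check m_N)=\one_{\{\check m_1+\dots+\check m_N=k\}}$.

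Next I would treat the deterministic factor $\hat{\tt C}_W$ by specializing Lemma \ref{lem:tupleenum} to the index set $U=\{\{i,j\}:1\le i<j\le N\}$ of pairs, with $A_i=\{\{i,j\}\in U: i\in\{i,j\}\}$. Since every pair meets exactly two of the $A_i$, the intersection $A_{\bf t}=\bigcap_{t_i=1}A_i\cap\bigcap_{t_j=0}A_j^c$ is empty unless ${\bf t}$ has exactly two ones, in which case it is the single pair $\{i,j\}$ with $t_i=t_j=1$. By the empty-set convention of Lemma \ref{lem:tupleenum} every summation indexed by $|{\bf t}|\neq 2$ drops out (in particular the trailing product over $|{\bf t}|=1$, since no pair lies in a single $A_i$ alone), and for the surviving pairs the weak-composition multiplicity is the number of compositions of $q_{\{i,j\}}$ into $|A_{\{i,j\}}|=1$ part, which equals $1$. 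The upper limit $\min_{\bf t}m-q_{>}$ turns into $(m_i-q_{>})\wedge m_j$ with $q_{>}$ collecting exactly the earlier pair-weights sharing an index, so the formula collapses to the nested sum over $q_{12},q_{13},\dots,q_{N-1\,N}$ claimed for $\hat{\tt C}_W$.

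Finally I would combine the two counts through Corollary \ref{cor:general-number-M}, now read with the Brownian-restricted quantities $\hat{\tt C}_W$ and $\check{\tt C}_W$: summing $\hat{\tt C}_W(\hat m)\,\check{\tt C}_W(k;\check m)$ over all decompositions $(\hat m)+(\check m)=(m)$ and absorbing the indicator $\check{\tt C}_W$ into the index constraint leaves precisely the stated sum of $\hat{\tt C}_W(\hat m_1,\dots,\hat m_N)$ over decompositions with $\check m_1+\dots+\check m_N=k$. The main point to be careful about is the bookkeeping in the specialization rather than any new estimate: one has to verify that \eqref{e:cBrown} genuinely annihilates every index carrying mass on a set of size $\ge 3$ as well as every identification mass on a pair, which legitimizes restricting $U$ to pairs, and one has to confirm that the empty-set convention of Lemma \ref{lem:tupleenum} removes exactly the singleton-type summations and trailing products that would otherwise clash with its nonemptiness hypothesis. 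With these points checked, the general formula degenerates cleanly into the pair-degree-sequence count, which is the content of the corollary.
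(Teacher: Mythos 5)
Your proposal is correct and follows essentially the same route as the paper: the paper likewise obtains $\hat{\tt C}_W$ by specializing Lemma \ref{lem:tupleenum} to $A_i=\{\{i,j\}:j\neq i\}$ (so that every nonempty intersection type consists of a single pair and all weak-composition factors equal one), obtains $\check{\tt C}_W$ by noting that \eqref{e:cBrown} forces $l$ to be supported on the singletons, whence $l=(m_1,\dotsc,m_N,0,\dotsc,0)$ is the only candidate and the count is $\one_{\{m_1+\dotsb+m_N=k\}}$, and then combines the two counts through Corollary \ref{cor:general-number-M}. Your extra bookkeeping (verifying that \eqref{e:cBrown} annihilates all mass on sets of size $\geq 3$ and all identification mass on pairs, and that the empty-set convention of Lemma \ref{lem:tupleenum} disposes of the singleton-type intersections and trailing factors) is precisely what the paper's terse proof leaves implicit.
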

\begin{proof}
For $\hat{ {\tt C}}_W$, the result follows by a similar use of Lemma \ref{lem:tupleenum} as above, setting
$A_i:=\{\{i,j\},j=1,\dotsc,N, j\neq i\}, i=1,\dotsc,N$.

Regarding the number $\check{ {\tt C}}_W(k; m_1,...,m_N)$, because of \eqref{e:cBrown}, only the first $N$ sets $l_{\{1\}},...,l_{\{N\}}$ are the support of such a contributing tuple $l$, and there is only one summand, $l_{\{j\}}$  to be summed up to $m_j$. Therefore, for given $(m_1,\dots,m_N)$, the only appearing tuple is $l=(m_1,\dotsc,m_N,0,\dotsc,0)$ and the resulting number is $$\check{ {\tt C}}_W(k; m_1,\dotsc,m_N)=\one_{\{m_1+\dotsb+m_N=k\}}.$$
\end{proof}

\subsection{... via generating functions}

Another way to count the elements of $D_N$ is by the help of generating functions (see \cite{Bona}, \cite{Wilf}).

Here we use  the  multiindex notation, in the sense that for a tuple $t\in \N^n$ and for variables $x=(x_1,\dotsc,x_n)$ we have $x^t=x_1^{t_1}\dotsm x_n^{t_n}$. For a set $S\subseteq\{1,\dotsc,N\}$, we set 

$$t_{S,i}:=\begin{cases}1,& i\in S,\\
0,& i\notin S\end{cases}$$
for $i=1,...,N.$
Hence we have $t_S \in \{0,1\}^N.$
As above we denote by $\hat{ {\tt C}}(m_1,\dotsc,m_N)$  the number of tuples $(l^o_{{\bf e}_{N+1}},\dotsc,l^o_{{\bf e}_{s_N}})$, and by $\check{ {\tt C}}(k,m_1,\dotsc,m_N)$ the number of tuples $(l_{{\bf e}_{1}},\dotsc,l_{{\bf e}_{s_N}})$  with $\sum_{i=1}^{s_N}l_{{\bf e}_i}=k$.
The generating function of $\hat{ {\tt C}}$ in the variables $x=(x_1,\dotsc,x_N)$ is given by the (formal) power series
\begin{align*}
g_{\hat{ {\tt C}}}(x)=\sum_{\hat{m}\in \N^N}\hat{ {\tt C}}(\hat{m})x^{\hat{m}}.
\end{align*}
We claim that
\begin{align*}
g_{\hat{ {\tt C}}}(x)=\sum_{\kappa=0}^\infty \,\,\,\,\sum_{l^o_{{\bf e}_{N+1}}+\dotsb+l^o_{{\bf e}_{s_N}}=\kappa} \,\,\,\left(x^{t_{{\bf e}_{N+1}}},\dotsc,x^{t_{{\bf e}_{s_N}}}\right)^{l^o},
\end{align*}
where $\left(x^{t_{{\bf e}_{N+1}}},\dotsc,x^{t_{{\bf e}_{s_N}}}\right)^{l^o}$ is the same as $\prod_{j=N+1}^{s_N} x^{l_{{\bf e}_j}\cdot\,t_{{\bf e}_j}}$.
 This equality can be seen by multiplying out the terms on the right hand side: The definition of the $t_S$ is constructed in the way such that 1 is added to the coefficient of $x^{\hat{m}}$, for each $l^o$ for which
\begin{align}\label{eq:sumcond}
 \sum_{\substack{j= N+1\\ i\in {\bf e}_j}}^{s_N}l^o_{{\bf e}_j}=\hat{m}_i,\quad i=1,\dotsc,N,
 \end{align}
 and the sum $\sum_{j= N+1}^{s_N}l^o_{{\bf e}_j}=\kappa$. Summing over all $\kappa$, we see that the last summation condition
 does not play a role and we are left with \eqref{eq:sumcond}, which is defining for $\hat{ {\tt C}}(\hat{m})$.

Note that the part
$\sum_{l^o_{{\bf e}_{N+1}}+\dotsb+l^o_{{\bf e}_{s_N}}=\kappa}\left(x^{t_{{\bf e}_{N+1}}},\dotsc,x^{t_{{\bf e}_{s_N}}}\right)^{l^o}$ can be obtained by first taking $\biggl(\sum_{j= N+1}^{s_N}Z_j\biggr)^{\kappa}$, for formal variables $Z=(Z_{N+1},\dotsc,Z_{s_N})$, then setting all (multinomial) coefficients that are not zero to 1 in the resulting polynomial. Finally, replace the $Z_j$ by the $x^{t_{{\bf e}_j}} = x_1^{t_{{\bf e}_j,1}}\cdots x_N^{t_{{\bf e}_j,N}}$.\smallskip

In the same way, the generating function of $\check{ {\tt C}}$ is

$$
g_{\check{ {\tt C}}}(y,x)=\sum_{k=0}^\infty\sum_{\check{m}\in \N^N}\check{ {\tt C}}(k,\check{m})y^k x^{\check{m}}=\sum_{{k}=0}^\infty y^k\sum_{l_{{\bf e}_1}+\dotsb+l_{{\bf e}_{s_N}}={k}}\left(x^{t_{{\bf e}_1}},\dotsc,x^{t_{{\bf e}_{s_N}}}\right)^{l},$$
where again $\sum_{l_{{\bf e}_1}+\dotsb+l_{{\bf e}_{s_N}}={k}}\left(x^{t_{{\bf e}_1}},\dotsc,x^{t_{{\bf e}_{s_N}}}\right)^{l}$ can be obtained by first taking $\biggl(\sum_{j=1}^{s_N}Z_j\biggr)^{k}$, then setting all coefficients that are not zero to 1 and, after that, inserting the $x^{t_{{\bf e}_j}}$ again.\smallskip

The generating function of ${\tt C}$, $g_{\tt C}(y,x)$ is then just $g_{\check{ {\tt C}}}(y,x)g_{\hat{ {\tt C}}}(x)$.


 \appendix \label{proofmain}
\section{ The  proof of Proposition \ref{alpha-products} \label{section-proof}}
 
We start with some preparations. We observe that in \eqref{random-deterministic}  the $ \al_{{\bf s}_n,i_n}(z_n)\cM^{ i_n}(dz_n) $  are   independent random  measures  if $i_n =1$ and
deterministic measures  if $i_n =0$, on
$((0,T]\times \R, \cB((0,T]\times \R)).$ 
We  define {\it mixed} multiple integrals to derive  a stochastic Fubini type relation for iterated  (mixed) integrals.

\begin{definition}[mixed multiple integrals] \label{mixed}
 Assume $f_1,...,f_k \in L^2_1.$  For $J \subseteq \{1,...,k\}$  and $n \in  \{1,...,k\}$ let
\equa
\mu_n(dz) := \left \{ \begin{array}{ll}  f^2_n(z) \m(dz) & \text{ if }  \, \,  n \in J   \\ \\
f_n (z) M(dz) & \text{ if } \,\ n \notin J.\\\\
        \end{array}      \right . 
\tion
Then we define  for any $g \in {\cal E}_k$ (see  \eqref{L0m}) the mixed multiple integral
$$I_{\mu_{1:k}}(g)_T :=  \sum_{j_1,...,j_k=1}^n  a_{j_1,...,j_k} \mu_1(A_{j_1})\cdots \mu_k(A_{j_k}). $$ 
\end{definition}

\begin{lemma} \label{mixed-properties} The mixed multiple integral $I_{\mu_{1:k}} $ from Definition \ref{mixed} has the following properties:
\begin{enumerate}[(i)]
\item \label{mixed-linear}    $I_{\mu_{1:k}}: {\cal E}_k \to L^2(\mathbb{P})$ is a linear map.
\item   \label{mixed-estimate}  For $g \in {\cal E}_k$ we have that
$$\EE I_{\mu_{1:k}}(g)_T =0 \quad \text{ if } |J|<k$$
and 
$$ \EE I_{\mu_{1:k}}(g)_T^2 \le \prod_{n \in J}  \m_n((0,T]\times \R)   \int_{((0,T]\times \R)^{k}} g^2(z_1,...,z_k) \m_1(dz_1) \cdots  \m_k (dz_k),$$
where $ d\m_n(z):= f_n^2(z)\m(dz).$ 
\item  \label{mixed-estimate-gen} Setting $ L^2_{\m_{1:k}}:=L^2(([0,T]\times\Rbb)^k, \mathcal{B}( ([0,T]\times\Rbb)^k), \otimes_{n=1}^k \m_n)$ one can extend $I_{\mu_{1:k}}$ to $ L^2_{\m_{1:k}}$ by linearity and continuity,
 and 
for any $f \in  L^2_{\m_{1:k}}$  we have
$$ \| I_{\mu_{1:k}}(f)_T\|_{L^2(\PP)} \le \prod_{n \in J}  \m_n((0,T]\times \R)^\frac{1}{2}  \,\, \|f\|_{ L^2_{\m_{1:k}}} .$$
\item \label{mixed-permutation}   Define for $f \in  L^2_{\m_{1:k}}$ and a permutation $\sigma \in {\tt S}_k$ the function  $$f^\sigma(z_1,...,z_k):=f(z_{\sigma(1)},...,z_{\sigma(k)}).$$
Then    $ I_{\mu_{\sigma(1):\sigma(k)}}(f)_T=I_{\mu_{1:k}}(f^\sigma)_T \quad a.s. $
\end{enumerate}
\end{lemma}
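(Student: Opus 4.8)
The plan is to handle the four parts in the order (i), (ii), (iii), (iv): parts (i) and (ii) are assertions about the simple integrand $g\in\mathcal{E}_k$ on which the construction rests, while (iii) and (iv) follow by continuous extension. For (i), linearity is immediate, since writing $g=\sum_{j_1,\dots,j_k}a_{j_1,\dots,j_k}\one_{A_{j_1}\times\dots\times A_{j_k}}$ the value $I_{\mu_{1:k}}(g)_T=\sum_{j_1,\dots,j_k}a_{j_1,\dots,j_k}\,\mu_1(A_{j_1})\cdots\mu_k(A_{j_k})$ depends linearly on the coefficients; that it lands in $L^2(\PP)$ follows from the bound in (ii), and well-definedness is checked by refining two representations of $g$ to a common one.

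For (ii) I would argue directly from the properties of $M$ recalled in Section \ref{sec:mu.in}: the $M(A_j)$ are independent and centred with $\EE M(A_j)^2=\m(A_j)$. Fix a single product $\mu_1(A_{j_1})\cdots\mu_k(A_{j_k})$ with pairwise distinct indices (the only ones surviving in $\mathcal{E}_k$), so the sets $A_{j_1},\dots,A_{j_k}$ are disjoint. For $n\notin J$ the factor $\mu_n(A_{j_n})=\int_{A_{j_n}}f_n\,\rmd M$ is centred and $\sigma(M|_{A_{j_n}})$-measurable, hence independent of all other factors; so if $|J|<k$ at least one such centred independent factor is present and the product has zero mean, giving $\EE I_{\mu_{1:k}}(g)_T=0$. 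For the variance I would expand $\EE I_{\mu_{1:k}}(g)_T^2$ into a double sum over tuples $(j_1,\dots,j_k)$ and $(j_1',\dots,j_k')$, pull out the deterministic factors $\m_n(A_{j_n})$ for $n\in J$, and use independence over disjoint sets together with centredness to see that a term survives only when the collection of random sets of the first tuple coincides with that of the second. On the diagonal this produces exactly $\sum a_{j_1,\dots,j_k}^2\prod_{n\in J}\m_n(A_{j_n})^2\prod_{n\notin J}\m_n(A_{j_n})$; estimating $\m_n(A_{j_n})^2\le\m_n((0,T]\times\R)\,\m_n(A_{j_n})$ for $n\in J$ and recognising the remaining factor as $\|g\|_{L^2_{\m_{1:k}}}^2$ yields the stated bound.

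The main obstacle is precisely this variance estimate. Besides the diagonal there are off-diagonal matchings, where the random sets of the two tuples agree as a collection but not coordinatewise, and one must check that these do not spoil the clean constant $\prod_{n\in J}\m_n((0,T]\times\R)$. Here the disjointness of the $A_j$, the centredness of $M$, and — crucially — the fact that the integrands relevant for the intended Fubini-type application are carried by the ordered region $\Delta_{\,T}(k)$ (so that two different orderings of the same sets cannot both occur) are what eliminate the spurious cross terms; the block-wise Cauchy–Schwarz inequality $\int_{A}f_nf_{n'}\,\rmd\m\le\m_n(A)^{1/2}\m_{n'}(A)^{1/2}$ is the tool to control whatever remains. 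This is the step I expect to require the most care.

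Finally, (iii) and (iv) are routine once (ii) is available. For (iii) I would note that $\mathcal{E}_k$ is dense in $L^2_{\m_{1:k}}$: the $\m_n=f_n^2\m$ are finite measures because $f_n\in L^2_1$, and the diagonal is an $\otimes_n\m_n$-null set since $\m$ has no atoms in the time variable, exactly as in the construction recalled in Lemma \ref{iterated-and-multiple}. Then the bound of (ii) exhibits $I_{\mu_{1:k}}$ as a bounded linear map on a dense subspace, which extends uniquely and continuously with the same operator norm. For (iv) I would first verify the identity on $g\in\mathcal{E}_k$ by computing $g^\sigma=\sum a_{j_1,\dots,j_k}\one_{A_{j_{\sigma^{-1}(1)}}\times\dots\times A_{j_{\sigma^{-1}(k)}}}$ and relabelling the finite product via $\prod_n\mu_n(A_{j_{\sigma^{-1}(n)}})=\prod_n\mu_{\sigma(n)}(A_{j_n})$ (ordinary products commute), and then pass to general $f\in L^2_{\m_{1:k}}$ by continuity, using that reordering the measure-sequence leaves the $L^2_{\m_{1:k}}$-norm invariant.
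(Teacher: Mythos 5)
Your handling of (i), (iii) and (iv) coincides with the paper's own proof: (i) is immediate; (iii) is the density-plus-continuity extension (the paper adapts the density argument of \cite[Section 1.1.2]{N06} to the product measure $\otimes_{n=1}^k\m_n$); and (iv) is verified on $\mathcal{E}_k$ by exactly your relabelling computation and then extended by approximation. The decisive part is (ii), and there you have put your finger on precisely the point where the paper's own argument is defective. After separating off the deterministic coordinates (which yields the factor $\prod_{n\in J}\m_n((0,T]\times\R)$, whether by H\"older as in the paper or by your estimate $\m_n(A_{j_n})^2\le \m_n((0,T]\times\R)\,\m_n(A_{j_n})$), the paper simply \emph{asserts} that the expectation of the squared random sum equals the diagonal sum $\sum_j a_j^2\,\EE\mu_{n_1}(A_{j_{n_1}})^2\cdots\EE\mu_{n_l}(A_{j_{n_l}})^2$, i.e.\ that all cross terms vanish. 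They do not: for two index tuples whose random coordinates use the same sets paired by a nontrivial permutation, the cross term factorizes into quantities $\EE[\mu_n(A)\mu_{n'}(A)]=\int_A f_nf_{n'}\,\rmd\m$, which are in general nonzero --- exactly the ``off-diagonal matchings'' you warn about.

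Neither your proposal nor the paper closes this gap, and in fact it cannot be closed, because the inequality in (ii) (and hence the constant in (iii)) is false as stated. Take $k=2$, $J=\emptyset$, $f_1=f_2=f$, disjoint sets $A_1,A_2$ with $0<\m_1(A_i)<\infty$, and $g=\one_{A_1\times A_2}+\one_{A_2\times A_1}\in\mathcal{E}_2$; then $I_{\mu_{1:2}}(g)_T=2\mu_1(A_1)\mu_1(A_2)$ and
\begin{equation*}
\EE I_{\mu_{1:2}}(g)_T^2=4\,\m_1(A_1)\,\m_1(A_2)=2\int_{((0,T]\times\R)^2}g^2\,\rmd\m_1\,\rmd\m_2,
\end{equation*}
twice the claimed bound, in accordance with the classical isometry $\EE I_2(g)_T^2=2\,\|\hat g\|_{L^2_2}^2$ (cf.\ Lemma \ref{iterated-and-multiple}). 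Of your two suggested repairs, the appeal to supports in $\Delta_{\,T}(k)$ is not admissible: the lemma quantifies over all $g\in\mathcal{E}_k$, and the simplex structure belongs to the application in Lemma \ref{lemma-multiple}, not to the statement being proved. Your Cauchy--Schwarz idea, however, does work and gives the correct substitute: bounding each matching via $\int_A f_nf_{n'}\,\rmd\m\le\m_n(A)^{1/2}\m_{n'}(A)^{1/2}$ and $ab\le\tfrac12(a^2+b^2)$, and noting that each diagonal term is hit by at most $(k-|J|)!$ matchings, one obtains
\begin{equation*}
\EE I_{\mu_{1:k}}(g)_T^2\le (k-|J|)!\;\prod_{n\in J}\m_n((0,T]\times\R)\int_{((0,T]\times\R)^{k}}g^2\,\rmd\m_1\cdots\rmd\m_k .
\end{equation*}
This weaker bound, with the corresponding factor $((k-|J|)!)^{1/2}$ in (iii), is all that is needed downstream: the extension in (iii), Lemma \ref{lemma-multiple} and Proposition \ref{alpha-products} use only that $I_{\mu_{1:k}}$ is a bounded operator on $\mathcal{E}_k$, never the value of the constant. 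So the way to complete your proof is to insert this factorial factor into (ii)--(iii) and carry out the matching/Cauchy--Schwarz estimate in full, rather than to chase the constant stated in the lemma.
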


\begin{proof}
\eqref{mixed-linear} is clear. 
To see \eqref{mixed-estimate} we may assume  that $\mu_{n_1}, ... , \mu_{n_l}$ are random
and $\mu_{n_{l+1}},..., \mu_{n_k}$  are deterministic.
Since $M$ is an independent  random measure with expectation zero one can easily see that for disjoint $B_1,...,B_l \in \mathcal{B}( [0,T]\times \R)$ 
the random variables $\mu_{n_1}(B_1), ... , \mu_{n_l}(B_l)$ are independent and centered, and 
$\EE \mu_{n_i}(B_i)^2 = \m_i(B_i)$ holds for $i=1,...,l.$    
This implies  by H\"older's inequality, 
\equa
&&  \EE I_{\mu_{1:k}}(g)_T^2 \\
& =& \EE \Big ( \int_{((0,T]\times \R)^{k-l}}   \sum_{j_1,...,j_k=1}^n  a_{j_1,...,j_k} \mu_{n_1}(A_{j_{n_1}})\cdots \mu_{n_l}(A_{j_{n_l}})  \one_{A_{j_{n_{l+1}}}}(z_{l+1}) ...\one_{A_{j_{n_k}}}(z_k) \\
&& \hspace{20em} \times \m_{n_{l+1}}(dz_{l+1}) ...\m_{n_k}(dz_k) \Big )^2 \\
&\le& \prod_{\ell=l+1}^k  \m_{n_\ell}((0,T]\times \R) \\
&&\times \int_{((0,T]\times \R)^{k-l}} \hspace{-1em} \EE \left ( \sum_{j_1,...,j_k=1}^n  a_{j_1,...,j_k} \mu_{n_1}(A_{j_{n_1}})\cdots \mu_{n_l}(A_{j_{n_l}})  \one_{A_{j_{n_{l+1}}}}(z_{l+1}) ...\one_{A_{j_{n_k}}}(z_k)
\right )^2\\
&& \hspace{20em} \times \m_{n_{l+1}}(dz_{l+1}) ...\m_{n_k}(dz_k)  \\
&=& \prod_{\ell=l+1}^k  \m_{n_\ell}((0,T]\times \R) \\
&&\times \int_{((0,T]\times \R)^{k-l}} \hspace{-1em} \EE  \sum_{j_1,...,j_k=1}^n  a_{j_1,...,j_k}^2 \mu_{n_1}(A_{j_{n_1}})^2\cdots 
\mu_{n_l}(A_{j_{n_l}})^2  \one_{A_{j_{n_{l+1}}}}(z_{l+1}) ...\one_{A_{j_{n_k}}}(z_k)\\
&& \hspace{20em} \times \m_{n_{l+1}}(dz_{l+1}) ...\m_{n_k}(dz_k)  \\
&=& \prod_{\ell=l+1}^k  \m_{n_\ell}((0,T]\times \R)   \int_{((0,T]\times \R)^{k}} g^2(z_1,...,z_k) \m_1(dz_1) \cdots  \m_k (dz_k),
\tion
where we used that $\EE\mu_{n_1}(A_{j_{n_1}})^2\cdots 
\mu_{n_l}(A_{j_{n_l}})^2= \m_{n_1}(A_{j_{n_1}})^2\cdots 
\m_{n_l}(A_{j_{n_l}})^2$ for the last line. \\
\eqref{mixed-estimate-gen} One can show that ${\cal E}_k$ is dense in $ L^2_{\m_{1:k}}$ by adjusting the proof given in \cite[Section 1.1.2]{N06} for a product measure with equal components to our case where we have $\otimes_{n=1}^k \m_n.$  Then  the assertion follows from \eqref{mixed-estimate}. \\
\eqref{mixed-permutation}
For $f \in  {\cal E}_k$ we have
\equa
 f^\sigma(z_1,...,z_k) &=& \sum_{j_1,...,j_k=1}^n  a_{j_1,...,j_k}   \one_{A_{j_1}\times ...\times A_{j_k}}(z_{\sigma(1)},...,z_{\sigma(k)})\\
&=& \sum_{j_1,...,j_k=1}^n  a_{j_1,...,j_k}   \one_{A_{j_{\sigma^{-1}(1)}}\times ...\times A_{j_{\sigma^{-1}(k)}}}(z_1,...,z_k)
 \tion
which implies
\equa I_{\mu_{1:k}}(f^\sigma )
&=& \sum_{j_1,...,j_k=1}^n  a_{j_1,...,j_k}  \mu_1({A_{j_{\sigma^{-1}(1)}}) ... \mu_k(A_{j_{\sigma^{-1}(k)}}})  \\
&=& \sum_{j_1,...,j_k=1}^n  a_{j_1,...,j_k}  \mu_{\sigma(1)}(A_{j_1}) ... \mu_{\sigma(k)}(A_{j_k})  \\
&=& I_{\mu_{\sigma(1):\sigma(k)}}(f).
\tion
For general $f \in L^2_{\m_{1:k}}$  we use again the approximation argument.
\end{proof}

Now we can state a relation between our mixed iterated integrals and  mixed multiple integrals.

\begin{lemma}  \label{lemma-multiple}   Assume that
$$(\cM^{i_1}(dz_1),...,\cM^{i_k}(dz_k))=(   {\bf m} (dz_1), \ldots ,{\bf m} (dz_{n}), M(dz_{n+1}),...,  M(dz_{k})).$$    
If $h_1,...,h_n \in  L_1^1$ and $h_{n+1},...,h_k \in  L_1^2,$ then (recall $\Delta_{\,T}(k)$  given in \eqref{Delta}) we have
 \equa  
&& \sum_{\sigma \in {\tt S}_k} \int_{\Delta_{\,T}(k)}   h_{\sigma(1)}(z_{1}) \cdots h_{\sigma(k)}(z_k)  \cM^{i_{\sigma(1)}}(dz_1) \ldots       \cM^{i_{\sigma(k)}}( dz_k)\\
&&=\left ( \int_{((0,T]\times \R)^{n}}   h_1(z_1)\cdots h_{n}(z_{n})  \,\,
{\bf m} (dz_1) \ldots {\bf m} (dz_{n})\right ) \,\,  I_{k-n} \left ( \otimes_{i=1}^{k-n} h_{n+i}  \right)_T.
\tion
\end{lemma}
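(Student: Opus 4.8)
The plan is to establish the identity first for simple integrands and then pass to the general case by the continuity estimates of Lemma~\ref{mixed-properties}. Since both sides are multilinear and continuous in $(h_1,\dots,h_k)$ with respect to the relevant $L^1$- and $L^2$-norms, and $\mathcal E_1$ is dense, it suffices to take each $h_m=\one_{A_m}$ with $A_m\in\mathcal B((0,T]\times\R)$ of finite $\m$-measure. Throughout I write $D:=\{1,\dots,n\}$ for the indices carrying the deterministic integrator $\m$ and $R:=\{n+1,\dots,k\}$ for those carrying the random integrator $M$. The crucial structural observation is that in every summand the function $h_m$ is paired with the \emph{same} integrator $\cM^{i_m}$, regardless of the permutation $\sigma$.

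The first step is to reassemble the sum over $\sigma\in{\tt S}_k$ of integrals over the time-ordered simplex $\Delta_{\,T}(k)$ from \eqref{Delta} into a single integral over the off-diagonal product space. For fixed $\sigma$ I substitute $w_{\sigma(j)}:=z_j$; the integrand then becomes $\prod_{m=1}^k h_m(w_m)\cM^{i_m}(dw_m)$ and the region $\{t_1<\dots<t_k\}$ is mapped to the ordering of the times of $w_1,\dots,w_k$ prescribed by $\sigma$. As $\sigma$ runs over ${\tt S}_k$ these $k!$ orderings partition $((0,T]\times\R)^k$ up to the diagonals $\{t(w_i)=t(w_j)\}$, which are negligible for the integration at hand because the time marginal of $\m$ is Lebesgue measure and $M$ charges only countably many times. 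Hence the left-hand side equals a single off-diagonal integral of the product kernel $h_1\otimes\dots\otimes h_k$, of the mixed type introduced in Definition~\ref{mixed}.

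The heart of the argument is then a stochastic Fubini step that detaches the deterministic integrations from the stochastic ones. Conditioning on the realisation of $M$, the $\m$-integration over each $w_m$ with $m\in D$ may be carried out first; the constraints $t(w_m)\neq t(w_{m'})$ for $m\in D$ and any $m'$ are $\m$-null and can be dropped, so every deterministic variable ranges over all of $(0,T]\times\R$. This frees the deterministic part completely and factorises the expression as
\[
\Bigl(\int_{((0,T]\times\R)^{n}}\prod_{m\in D}h_m(w_m)\,\m(dw_{1})\cdots\m(dw_{n})\Bigr)\cdot\int^{\neq}\prod_{m\in R}h_m(w_m)\,M(dw_m),
\]
where $\int^{\neq}$ denotes integration over the region with pairwise distinct times among the random variables. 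The first factor is exactly the product integral appearing in the statement, while the second, by the relation between iterated and multiple integrals in Lemma~\ref{iterated-and-multiple}\eqref{with-hat-or-not} (summing the $(k-n)!$ orderings of the simplex reproduces the off-diagonal integral), equals $I_{k-n}\bigl(\otimes_{i=1}^{k-n}h_{n+i}\bigr)_T$. Combining the two factors yields the claim.

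I expect the stochastic Fubini step to be the main obstacle: one must justify interchanging the deterministic $\m$-integrations with the Itô-type integration against $M$ and, simultaneously, argue that the time-ordering constraints coupling the deterministic and random variables are genuinely irrelevant because $\m$ is non-atomic in time. The reduction to simple functions is what makes this rigorous, since there all integrals are finite sums of products of independent, centred increments $M(A_j)$ and deterministic masses $\m(A_j)$, and the decoupling is precisely the computation already underlying the proof of Lemma~\ref{mixed-properties}\eqref{mixed-estimate}.
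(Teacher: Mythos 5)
Your proposal is correct and takes essentially the same route as the paper's own proof: both recast the permuted simplex integrals as mixed multiple integrals in the sense of Definition \ref{mixed}, use permutation invariance (Lemma \ref{mixed-properties}\eqref{mixed-permutation}) so that the sum over $\sigma\in{\tt S}_k$ fills out $((0,T]\times\R)^k$ up to the time-diagonals, which are $\m^{\otimes k}$-null, and then factor the deterministic $\m$-integrations out of $I_{\mu_{1:k}}\big(\one_{((0,T]\times\R)^k}\big)$, identifying the remaining random part with $I_{k-n}$. The only differences are cosmetic: the paper skips your preliminary reduction to indicator kernels by absorbing the $h_\ell$ into the measures $\mu_\ell$ and taking $\one_{\Delta_{\,T}(k)}$ as the kernel, and your pathwise heuristics (``$M$ charges only countably many times'', ``conditioning on the realisation of $M$'') should be read as shorthand for this $L^2$-level argument, since they are not literally meaningful for the Gaussian component of $M$.
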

\begin{proof} We set 
$$  \mu_\ell(dz_\ell) := \left \{ \begin{array}{ll}  h_\ell(z_\ell) \m(dz_\ell) & \text{ if }  \, \,  \ell=1,...,n   \\ \\
h_\ell(z_\ell) M(dz_\ell) & \text{ if } \,\ \ell=n+1,...,k.\\\\
        \end{array}      \right . $$
For $f(z_1,...,z_k) := \one_{\Delta_{\,T}(k)}(z_1,...,z_k)$ it holds
\equa
 \sum_{\sigma \in \tt{S}_k}  f^\sigma (z_1,...,z_k) 
 &=&  \sum_{\sigma \in \tt{S}_k}   \one_{\Delta_{\,T}(m)}(z_{\sigma(1)},...,z_{\sigma(k)})  \\
 &=&  \sum_{\pi=\sigma^{-1} \in \tt{S}_k}   \one_{\{(t_{\pi(1)}, x_{\pi(1)},\ldots, t_{\pi(k)},   x_{\pi(k)}): \ 0\leq t_{\pi(1)}  <\ldots  < t_{\pi(k)} < T  \}}(z_1,...,z_k)\\
 &=& \one_{((0,T]\times \R)^k}(z_1,...,z_k)  \quad  \m^{\otimes k} a.e.
 \tion
Then, by Lemma \ref{mixed-properties} 
\begin{align*}
& \sum_{\sigma \in \tt{S}_k} \int_{\Delta_{\,T}(k)}   h_{\sigma(1)}(z_{1}) \cdots h_{\sigma(k)}(z_k)  \cM^{i_{\sigma(1)}}(dz_1) \ldots       \cM^{i_{\sigma(k)}}( dz_k)\\
=&  \sum_{\sigma \in \tt{S}_k}  \int_{((0,T]\times \R)^{k}} f(z_1,...,z_k) \mu_{\sigma(1)}(dz_1)...\mu_{\sigma(k)}(dz_k) \\
=& \sum_{\sigma \in \tt{S}_k}  I_{\mu_{\sigma(1):\sigma(k)}}(f)_T
=I_{\mu_{1:k}} \bigg( \sum_{\sigma \in \tt{S}_k}  f^\sigma \bigg)_T 
=   I_{\mu_{1:k}} \Big(  \one_{((0,T]\times \R)^k} \Big)  \\
=&\left ( \int_{((0,T]\times \R)^{n}}   h_1(z_1)\cdots h_{n}(z_{n})  \,\, 
{\bf m} (dz_1) \ldots {\bf m} (dz_{n})\right ) \,\,  I_{k-n} \left ( \otimes_{i=1}^{k-n} h_{n+i}  \right)_T.
\end{align*}
\end{proof}

\begin{proof}[Proof of Proposition \ref{alpha-products}] 
We have by  Proposition  \ref{product-iterated} and \eqref{random-deterministic}
\equa 
&&\prod_{j=1}^N J_{m_j}(\al_j^{\otimes m_j})_T  \\
&=&   \sum_k
\sum_{({\bf s}_1,...,{\bf s}_k)  \in A_k } \,\,   \sum_{{\bf i} \in \{0,1\}^{k}}  \int_{\Delta_{\,T}(k)}  \al_{{\bf s}_1,i_1}(z_1)\cM^{ i_1}(dz_1) \ldots       
\al_{{\bf s}_{k},i_k}(z_k)  \cM^{i_k}( dz_k).
\tion

We denote by $ [{\bf s}_1,i_1,...,{\bf s}_k,i_k]/\!\!\sim $ all equivalence classes  $[({\bf s}_1,i_1),...,({\bf s}_k,i_k)]$ with respect to permutations from ${\tt S}_k$
 where $({\bf s}_1,...,{\bf s}_k)  \in A_k$ and $(i_1,...,i_k)  \in \{0,1\}^{k}. $   
Our intention is to replace the above expression by  the following one (up to some factors which we want to determine next) 
$$ \sum_{k}       \sum_{ [{\bf s}_1,i_1,...,{\bf s}_k,i_k] /\!\sim \,\,} 
 \sum_{\sigma \in \mathtt{S}_k}
\ \int_{\Delta_{\,T}(k)} 
\prod_{j=1}^k \al_{{\bf s}_{\sigma(j)}, i_{\sigma(j)}}(z_j) \cM^{i_{\sigma(1)}}(dz_1) \ldots       \cM^{i_{\sigma(k)}}( dz_{k}).
$$ 
Whenever we have the equality (defining $\sim$)
\begin{align*}
(({\bf s}_1,i_1),...,({\bf s}_k,i_k))= (({\bf s}_{\sigma(1)},i_{\sigma(1)}),...,({\bf s}_{\sigma(k)},i_{\sigma(k)})),
\end{align*}
the same summand appears.   Like in the multinomial theorem, also here the multiplicity of the summands equals the number of those permutations that  do not change a tuple (in the multinomial theorem for $(a_1+\dotsb+a_K)^n$, with pairwise disjoint $a_i$, the multiplicity of the term   $\prod_{i=1}^K  a_i^{k_i} $   is $ \frac{n!}{k_1!\cdots k_K!},$ where $k_1+\dotsc+k_K=n$). 
 In our case, for any $({\bf s}_1,...,{\bf s}_k)  \in A_k$ and ${\bf i} \in \{0,1\}^{k}$, to count the multiplicity of the  ${\bf s}_m$ 
in $(({\bf s}_1,i_1),...,({\bf s}_k,i_k))$ paired  either with $i_m=1$ or $i_m=0$, we define for $j=1,..., \sn$ 
$$l_j :=| \{m: i_m = 1 \text{ and }  {\bf s}_m  =  {\bf e}_j\}|    \quad \text{and } \quad  
l^o_j :=| \{m: i_m = 0 \text{ and }  {\bf s}_m  =  {\bf e}_j\}|.$$
Then
\equa 
&&   \sum_k
\sum_{({\bf s}_1,...,{\bf s}_k)  \in A_k } \,\,   \sum_{{\bf i} \in \{0,1\}^{k}}  \int_{\Delta_{\,T}(k)}  \al_{{\bf s}_1,i_1}(z_1)\cM^{ i_1}(dz_1) \ldots       
\al_{{\bf s}_{k},i_k}(z_k)  \cM^{i_k}( dz_k)\\
&\!\!=\!\!& \sum_{k}       \sum_{ [{\bf s}_1,i_1,...,{\bf s}_k,i_k]/\sim  \, } \frac{ 1 }{ \,l!   \,l^o!   }
 \sum_{\sigma \in \mathtt{S}_k}
\ \int_{\Delta_{\,T}(k)} 
\prod_{j=1}^k \al_{{\bf s}_{\sigma(j)}, i_{\sigma(j)}}(z_j) \cM^{i_{\sigma(1)}}(dz_1) \ldots       \cM^{i_{\sigma(k)}}( dz_{k}) .
\tion

We denote by $n:= |l^o| = k - \sum_{j=1}^k i_j$ the number of zeros in ${\bf i} \in \{0,1\}^k$ and
choose a permutation $\pi \in  \mathtt{S}_k$  for which 
 $$  \cM^{ i_{\pi(1)}}(dz_1) \ldots       \cM^{i_{\pi(k)}}( dz_{k}) =({\bf m} (dz_1), \ldots ,{\bf m} (dz_{n}), M(dz_{n+1}),...,  M(dz_{k})).$$
Then we have  by Lemma \ref{lemma-multiple} that 

 \equa 
&&  \sum_{\sigma \in \mathtt{S}_k}
\ \int_{\Delta_{\,T}(k)} 
\prod_{j=1}^k \al_{{\bf s}_{\sigma(j)}, i_{\sigma(j)}}(z_j) \cM^{i_{\sigma(1)}}(dz_1) \ldots       \cM^{i_{\sigma(k)}}( dz_{k})  \\
&& = \int_{((0,T]\times \R)^{k}} 
\prod_{j=1}^k \al_{{\bf s}_{\pi(j)}, i_{\pi(j)}}(z_j) \, {\bf m} (dz_1) \cdots {\bf m} (dz_{n}) M(dz_{n+1}) \cdots  M(dz_{k}) \\
&& =  
 \int_{((0,T]\times \R)^n} 
     \bigotimes_{j=1}^n \, \al_{{\bf s}_{\pi(j)}, 0}(z_1,...,z_n)  \,\m(dz_1)... \m(dz_n)   \,\,\,   I_{k-n}\!\! \left ( \bigotimes_{j=n+1}^{k} \al_{{\bf s}_{\pi(j)}, 1}  \right )_T.  \tion
This yields
\equa 
&&\prod_{j=1}^N J_{m_j}(\al_j^{\otimes m_j})_T  \\
&=& \sum_{k}    \sum_{ [{\bf s}_1,i_1,...,{\bf s}_k,i_k]/\sim  \, } \frac{ 1 }{ \,l!   \,l^o!   }  
 \sum_{\sigma \in \mathtt{S}_k}
\ \int_{\Delta_{\,T}(k)} 
\prod_{j=1}^k \al_{{\bf s}_{\sigma(j)}, i_{\sigma(j)}}(z_j) \cM^{i_{\sigma(1)}}(dz_1) \ldots       \cM^{i_{\sigma(k)}}( dz_{k})  \\
&=& \sum_{k}       \sum_{\begin{subarray}{c}|l^o|+|l|=k,\\ (l,l^o) \in D_N \end{subarray}}   \frac{ 1}{ \,l!   \,l^o!    } 
 \left (\int_{((0,T]\times \R)^{|l^o|} }   \al^{ \otimes l^o}d\m^{\otimes |l^o|} \right )\,\,  I_{|l|} (\al^{\otimes l})_T, 
\tion
where 
$$    \al^{\otimes l} = \al_{{\bf e}_1,1}^{\otimes l_1} \otimes ... \otimes\al_{{\bf e}_{\ssn},1}^{\otimes l_{\ssn}} \quad \text{and} \quad
 \al^{ \otimes l^o} = \al_{{\bf e}_1,0}^{\otimes l^o_1}\otimes ... \otimes\al_{{\bf e}_{\ssn},0}^{\otimes l^o_{\ssn}}.$$
 Rewriting the condition in  $A_k$ from Proposition \ref{product-iterated} to this setting leads to the set  $D_N$ given in \ref{Dsn}.
By rearranging the summands we get 
\equa 
\prod_{j=1}^N J_{m_j}(\al_j^{\otimes m_j})_T  
 = \sum_{k}       \sum_{\begin{subarray}{c}|l|=k,\\ (l,l^o) \in D_N \end{subarray}}   \frac{ 1}{ \,l!   \,l^o!    } 
 \left (\int_{((0,T]\times \R)^{|l^o|} }   \al^{ \otimes l^o}d\m^{\otimes |l^o|} \right )\,\,  I_{k} (\al^{\otimes l})_T.
\tion
Finally, to get \eqref{first-product-formula}, we use Lemma \ref{iterated-and-multiple} to write the iterated integrals on the l.h.s.~as multiple integrals  which implies on the r.h.s.~the factor $m_1!\cdots m_N!.$
\end{proof}

\end{document}